\newtheorem{theorem}{Theorem}[section]
\newtheorem{lemma}[theorem]{Lemma}
\newtheorem{proposition}[theorem]{Proposition}
\newtheorem{corollary}[theorem]{Corollary}
\theoremstyle{definition}
\newtheorem{remark}[theorem]{Remark}
\numberwithin{equation}{section}
\newcommand{\divh}{\mathrm{div}_{\!H}\,}		% horizontale Divergenz
\newcommand{\nablah}{\nabla_{\!H}\,} 			% horizontaler Gradient
\newcommand{\Deltah}{\Delta_{\!H}\,}  			% horizontaler Laplace	
\DeclareMathOperator{\nablatoalpha}{\nabla^{\alpha}}
\DeclareMathOperator{\nablatoalphastr}{\nabla^{\alpha'}}
\DeclareMathOperator{\nablatoalphaminstr}{\nabla^{\alpha-\alpha'}}
\DeclareMathOperator{\per}{\mbox{\tiny per}}
\providecommand{\norm}[1]{\left\| #1 \right\|} % Norm
\renewcommand{\d}{\operatorname{d}\!} %differential
\DeclareMathOperator{\dz}{\d z}		%dz
\DeclareMathOperator{\dt}{\d t}		%dt
\DeclareMathOperator{\dr}{\d r} 	%dr
\DeclareMathOperator{\dxi}{\d \xi}	%dxi
\DeclareMathOperator{\ddt}{\frac{\d}{\dt}} %d/dt
\newcommand{\R}{\mathbb{R}}
\newcommand{\N}{\mathbb{N}}
\title{Primitive Equations with half horizontal viscosity}
\subjclass[2010]{Primary: 35Q35;
Secondary: 35Q86, 35M10, 76D03, 86A05, 86A10.}
\author{Martin Saal}
\address{Department of Mathematics,
TU Darmstadt,\\ Schlossgartenstr.~7,
64289 Darmstadt, Germany}
\email{msaal@mathematik.tu-darmstadt.de}
\begin{document}

\begin{abstract}
We consider the $3D$ primitive equations and show, that one does need
less than horizontal viscosity to obtain a well-posedness result in 
Sobolev spaces. Furthermore, we will also investigate the primitive 
equations with horizontal viscosity and show that these equations are 
well-posed without imposing any boundary condition for the horizontal 
velocity components on the vertical boundary.
\end{abstract}

\maketitle

%%%%%%%%%%%%%%%%%%%%%%%%%%%%%%%%%%%%%%%%%%%%%%%%%%%%%%%%%%%

\section{Introduction}
The primitive equations are one of the fundamental models 
for geophysical flows and they are used to describe 
oceanic and atmospheric dynamics. They are derived from the
Navier-Stokes equations in domains where the vertical scale
is much smaller than the horizontal scale by performing the 
formal small aspect ratio limit. They  
describe the velocity $u$ of a fluid and the pressure $p$.
Putting $u = (v, w)$, where $v = (v_1, v_2)$ denotes the
horizontal components and $w$ stands for the vertical one,
the equations read with full viscosity
\begin{align*}
\left\{
\begin{array}{rll}
  \partial_t v + v \cdot \nablah v + w \partial_z v
  - \nu_1 \Deltah v-\nu_2 \partial_{zz}v + \nablah p & = 0, & \text{ in } \Omega
  \times (0, T),\\
   \partial_z p & =0, &\text{ in } \Omega \times (0, T), \\
  \divh v + \partial_z w & = 0, & \text{ in } \Omega \times
  (0, T), \\
  v(t=0) & = v_0, & \text{ in } \Omega.
\end{array}\right.
\end{align*}
Here $\Omega := G \times (-h, h) \subset \R^3$
for $h > 0$, $G \subset \R^2$; $\nu_1$ stands for the 
horizontal viscosity and $\nu_2$ for the 
vertical one; 
$\nablah$, $\divh$ and $\Deltah$ denote the horizontal
gradient, divergence and Laplacian, respectively and $v \cdot \nablah=v_1
\partial_x+v_2\partial_y$. Throughout this work we take $G=(-1,1)^2$. A rigorous 
justification of the small aspect ratio limit of the Navier-Stokes equations 
to the primitive equations is given in \cite{LiTiti2017}.
For simplicity we formulated the equations without the Coriolis force, 
but being a zero order term it does not change the qualitative results 
obtained by us.

Note, that the vertical velocity $w$ is determined by the 
divergence free condition and boundary conditions for $w$ 
on the bottom of the domain, 
so it has less regularity 
than $v$ making the nonlinear term $w\partial_z v$ stronger 
compared to the nonlinearity of the Navier-Stokes equation.

The mathematical 
analysis of the primitive equations has been started by Lions, 
Temam and Wang \cite{Lionsetal1992, Lionsetal1992_b, Lionsetal1993},
and in difference to the $3D$ Navier-Stokes equations 
the primitive equations are known to be time-global well-posed 
for initial data in $H^1(\Omega)$. This break through result 
has been proven by Cao and Titi \cite{CaoTiti2007} (see also 
Kobelkov \cite{Kobelkov2006} and Kukavica and Ziane \cite{Ziane2007}) 
and launched a lot of activity in the analysis of those equations.
In \cite{GigaGriesHusseinHieberKashiwabara2020}, Giga, 
Gries, Hieber, Hussein and Kashiwabara give results on 
strong well-posedness for only bounded initial data without 
any differentiability condition.
For more information on previous results on the primitive
equations we refer to the works of Washington and
Parkinson \cite{WashingtonParkinson1986}, Pedlosky
\cite{Pedlosky1987}, Majda \cite{Majda2003} and Vallis
\cite{Vallis2006}; see also the survey by Li and Titi 
\cite{LiTiti2016} for further references.

In the case $\nu_1=\nu_2=0$, i.e. when there is no viscosity at 
all, one obtains the $3D$ primitive Euler equations, 
which are also called the hydrostatic Euler equations.
The only existence result for this system is 
due to Kukavica, Temam, Vicol and Ziane
\cite{Kukavica2011}. They show that real-analytic 
data leads to a real-analytic local in time solution. 
For the $2D$ hydrostatic Euler equations Han-Kwan 
and Nguyen have shown in \cite{HanNguyen2016}, 
that this set of equations is ill-posed in the 
Sobolev space setting, in the sense that - without 
any additional condition on the data - the 
solution map cannot be H\"older continuous. 
Such an additional condition was first used by Brenier 
\cite{Brenier} and later by Masmoudi and Wong 
\cite{MasmoudiWong2012} to show 
the local in time well-posedness of the $2D$ 
hydrostatic Euler (see also \cite{Kukavica2014} for a generalization). 
They assume, that the initial 
velocity has a convex profile in the vertical direction 
which means, that one has a Rayleigh condition of the 
form $\partial_{zz}v\neq 0$ in $\Omega$. Regarding 
the question of global well-posedness, Cao et 
al \cite{Caoetal2015} and Wong \cite{Wong2015} have shown, 
that smooth solutions to the primitive Euler equations 
blow-up in finite time. In contrast to this situation, 
for the system with horizontal 
viscosity, i.e $\nu_1>0, \nu_2=0$ Cao, Li and Titi \cite{CaoLiTiti2016}
prove not only local but even 
global well-posedness for initial data in $H^2(\Omega)$.

So the question is natural, how much anisotropic viscosity is 
needed to obtain a local well-posedness result in 
Sobolev spaces. We will show, that one can take out at least 
"half" the horizontal viscosity by considering the following system.
\begin{align}\label{eq:primequhalfhorvisc}
\begin{split}
\partial_t v + v \cdot \nablah  v + w \partial_z v - A_{\perp} v + \nablah  p  & = 0 \qquad\mbox{in }(0,T)\times\Omega,  \\
\partial_z p  & = 0 \qquad\mbox{in }(0,T)\times\Omega,  \\
\divh v + \partial_z w & = 0 \qquad\mbox{in }(0,T)\times \Omega,\\
v(t=0)& = v_0  \!\!\qquad\mbox{in }\Omega
\end{split}
\end{align}
with periodic boundary conditions imposed on $v$ and $p$ in the horizontal directions, $w(z=\pm h)=0$ and 
\begin{align*}
 A_{\perp} =\left(\begin{matrix} \partial_{yy} & 0 \\ 0 & \partial_{xx}\end{matrix}\right).
\end{align*}
Note, that we do not have any boundary condition 
for $v$ on the vertical boundary. The 
main difficulty arises due to the nonlinear term 
$w\partial_zv$, because of the lack of 
regularity of $w$. In \cite{MasmoudiWong2012} Masmoudi and Wong 
make use of a special cancellation 
property related to this term when considering 
the $2D$ hydrostatic Euler equations to deduce bounds 
for $v_z$ under the condition $\partial_{zz}v\neq 0$. 
While in the $2D$ case this is sufficient to control 
also $v$, in the $3D$ setting this is not immediately 
possible. To work around that problem we combine their 
method with the approach Cao and Titi used
in \cite{CaoTiti2007} to split the function $v$ into 
two parts, the vertical average $\overline v$ 
(also called the barotropic mode) and an average 
free remainder $\tilde v$ (the baroclinic mode). 
Then $\overline v$ is the solution to a $2D$ equation 
containing the pressure and $\tilde v$ can 
be controlled by $v_z=\tilde v_z$. 
Our result than reads as follows.
\begin{theorem}\label{th:introd2}
Let $s\geq3$. Then for any horizontally periodic 
$v_0=(v_{01},v_{02})  \in  H^s(\Omega)$  
with $\partial_z v_0  \in  H^s(\Omega)$, 
$\int_{-h}^h\divh v_0(x,y,\xi)\dxi=0$  and 
$\partial_{zz} v_{0i}\neq 0$ in $\Omega$ for $i=1,2$
there exists a time $T>0$ and a unique strong solution 
$v$ to (\ref{eq:primequhalfhorvisc}) with
\begin{align*}
 & v \in L^{\infty}((0,T), H^{s}(\Omega)) \cap C^0([0,T], H^{s-\kappa}(\Omega)),\\
 & \partial_x v_2,\partial_y v_1 \in L^2((0,T),H^{s}(\Omega))
\end{align*}
for all $\kappa\in(0,1)$, and $\partial_z v$ has the same regularity as $v$.
\end{theorem}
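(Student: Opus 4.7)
\emph{Overall strategy.} The plan is to construct the solution by a priori estimates plus compactness on an approximation scheme. Following the introduction, the key step is to split $v=\overline v+\widetilde v$ into its vertical average $\overline v(x,y)=\frac{1}{2h}\int_{-h}^{h}v(x,y,z)\,dz$ and the zero-mean fluctuation $\widetilde v=v-\overline v$, and then control $\overline v$ in $H^s$ and $\partial_z v$ in $H^s$ separately. Since $\partial_z v=\partial_z\widetilde v$ and $\widetilde v$ has vanishing vertical mean, a Poincar\'e inequality in $z$ recovers $\widetilde v$ from $\partial_z v$, so these two pieces together control the full $H^s$-norm of $v$.

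\emph{Equations after splitting.} Averaging the momentum equation in $z$ and using $w(z=\pm h)=0$ (which also gives $\divh\overline v=0$ via the initial condition $\int_{-h}^{h}\divh v_0\,\dxi=0$), the pressure gradient $\nablah p$, being $z$-independent, is preserved by the averaging, and $\overline v$ solves a 2D anisotropically-viscous system with quadratic forcing from the fluctuations. This 2D system is controllable in $H^s$ by standard energy methods once the forcing is estimated by $\|\widetilde v\|_{H^s}$ and $\|\partial_z v\|_{H^s}$. Differentiating the momentum equation in $z$ and using $\partial_z p=0$ together with $\partial_z w=-\divh v$, the pressure drops out and $v_z$ satisfies
\begin{equation*}
\partial_t v_z + v\cdot\nablah v_z + v_z\cdot\nablah v - (\divh v)\,v_z + w\,\partial_{zz} v - A_\perp v_z = 0.
\end{equation*}
This pressure-free structure is what makes the splitting effective.

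\emph{Rayleigh cancellation.} The dangerous term is $w\,\partial_{zz}v$, since $w=-\int_{-h}^{z}\divh v(\cdot,\xi)\dxi$ loses one horizontal derivative relative to $v$. Here I would use the Brenier--Masmoudi--Wong cancellation: because $\partial_{zz}v_{0i}\neq 0$ and the flow is continuous, the Rayleigh condition persists on a small interval, so one can test the $v_z$-equation against an appropriate multiplier depending on $\partial_{zz}v$ and integrate $w\,\partial_{zz}v$ by parts in $z$. The boundary terms at $z=\pm h$ vanish thanks to $w(\pm h)=0$, and the remaining contributions are absorbed by the dissipative and cubic terms already present in the estimate. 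Higher-order bounds are obtained by commuting $\partial^{\alpha}$, $|\alpha|\leq s$, past the averaged equation and past the $v_z$-equation, estimating commutators via Moser-type inequalities; the anisotropic dissipation controls the horizontal derivatives it provides directly, and the remaining ones are handled via the weighted cancellation. Combining the $\overline v$- and $v_z$-estimates gives a Gronwall-type control of $\|v\|_{H^s}+\|\partial_z v\|_{H^s}$ on a short time interval $[0,T]$.

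\emph{Passage to the limit and main obstacle.} I would apply the a priori estimates to approximate solutions produced by Galerkin truncation or by parabolic regularization, e.g.\ adding $\varepsilon(\partial_{xx}v_1+\partial_{yy}v_2)$ to restore full horizontal viscosity so that the regularized problem falls in the scope of the Cao--Li--Titi theory, and check that both the estimates and the Rayleigh condition survive uniformly in the regularization parameter. Aubin--Lions compactness then yields the $C^0([0,T],H^{s-\kappa})$ continuity and identifies a limit that strongly solves (\ref{eq:primequhalfhorvisc}); uniqueness follows from the same Rayleigh-weighted energy identity applied to the difference of two solutions. The main obstacle I anticipate is lifting the Masmoudi--Wong cancellation from the $L^2/H^1$ level used for the 2D hydrostatic Euler equations to the full $H^s$ level in three dimensions, which requires careful commutation of $\partial^{\alpha}$ with the Rayleigh weight involving $1/\partial_{zz}v$ and quantitative preservation of $\partial_{zz}v\neq 0$ on $[0,T]$.
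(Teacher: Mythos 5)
Your proposal follows essentially the same route as the paper: the Cao--Titi splitting $v=\overline v+\widetilde v$ with Poincar\'e in $z$, the pressure-free equation for $\partial_z v$, the Masmoudi--Wong Rayleigh-weighted multiplier $\nablatoalpha\partial_z v_i/\partial_{zz}v_i$ for the top-order horizontal derivatives, regularization by adding the missing half viscosity with coefficient $\varepsilon$ so that the horizontal-viscosity theory applies, $\varepsilon$-uniform estimates together with quantitative persistence of $\partial_{zz}v\neq0$, and Aubin--Lions to pass to the limit. The obstacle you flag (commuting $\nablatoalpha$ with the weight and preserving the Rayleigh condition uniformly) is exactly what the paper's Lemmas on the barotropic modes and the H\"older-in-time bound on $\partial_{zz}v$ resolve, so your outline is correct in strategy.
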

Furthermore, we show a second possibility to take 
out half the horizontal viscosity by 
replacing $A_{\perp}$ in (\ref{eq:primequhalfhorvisc}) with
\begin{align*}
 A_{||}=\left(\begin{matrix} \partial_{xx} & 0 \\ 0 & \partial_{yy}\end{matrix}\right).
\end{align*}
This case is easier than the previous one, 
because the operator $A_{||}$ controls the horizontal 
divergence of $v$and thus also the function $w$ 
directly. We will prove the following well-posedness 
result for which no Rayleigh condition is needed.  
\begin{theorem}\label{th:introd3}
Let $s\geq3$. Then for any horizontally periodic $v_0\in  H^s(\Omega)$  with $\int_{-h}^h\divh v_0(x,y,\xi)\dxi=0$
there exists a time $T>0$ and a unique strong solution 
$v$ to (\ref{eq:primequhalfhorvisc}) with $A_{\perp}$ 
replaced by $A_{||}$ and we have
\begin{align*}
 & v \in L^{\infty}((0,T), H^{s}(\Omega)) \cap C^0([0,T], H^{s-\kappa}(\Omega)),\\
 & \partial_x v_1,\partial_y v_2 \in L^2((0,T),H^{s}(\Omega))
\end{align*}
for all $\kappa\in(0,1)$.
\end{theorem}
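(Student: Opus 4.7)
\medskip

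\noindent\emph{Overall strategy.} We use a Galerkin approximation (horizontally truncated Fourier modes; no boundary data for $v$ at $z=\pm h$ is required) and close a priori estimates in $H^s$. The new structural point, compared with \cite{CaoLiTiti2016} where a full horizontal Laplacian is available, is that although $A_{||}$ dissipates only two of the four horizontal derivatives of $v$, by horizontal periodicity it controls the horizontal divergence:
\[
-(A_{||}v,v)_{L^2(\Omega)} \;=\; \|\partial_x v_1\|_{L^2}^2+\|\partial_y v_2\|_{L^2}^2 \;\ge\; \tfrac{1}{2}\|\divh v\|_{L^2}^2,
\]
and the same inequality holds after applying any horizontal derivative. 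Since $w(x,y,z,t) = -\int_{-h}^{z}\divh v(x,y,\xi,t)\,\d\xi$ by incompressibility and $w(-h)=0$, the dissipation supplies exactly the control of $w$ and $\partial_z w$ needed to tame the vertical transport term $w\partial_z v$.

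\medskip

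\noindent\emph{Pressure and compatibility.} The hypothesis $\int_{-h}^{h}\divh v_0\,\d\xi=0$ reads $\divh\overline{v_0}=0$. Averaging the momentum equation in $z$ (using $\partial_z p=0$ and $w(\pm h)=0$) and taking the horizontal divergence produces an elliptic equation $\Deltah p = F(\overline v, v)$ on $G$, which determines $p(\cdot,\cdot,t)$ uniquely up to a constant and propagates $\divh\overline v\equiv 0$ in time. Once $p$ is defined this way, its contribution to the $H^s$ energy estimate for $v$ vanishes: $\nablah p$ is independent of $z$ and hence orthogonal in $L^2(\Omega)$ to the baroclinic part $\tilde v$, and against $\overline v$ it produces a term proportional to $\divh\overline v = 0$.

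\medskip

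\noindent\emph{Energy estimate and main obstacle.} For each multi-index $|\alpha|\le s$ apply $\partial^\alpha$ to the momentum equation and test with $\partial^\alpha v$. The top-order transport terms cancel,
\[
(v\cdot\nablah\partial^\alpha v,\partial^\alpha v) + (w\partial_z\partial^\alpha v,\partial^\alpha v) \;=\; -\tfrac{1}{2}\int_\Omega(\divh v+\partial_z w)\,|\partial^\alpha v|^2 \;=\; 0,
\]
leaving only the commutators $[\partial^\alpha,v\cdot\nablah]v$ and $[\partial^\alpha,w]\partial_z v$. Kato--Ponce gives
\[
\|[\partial^\alpha,v\cdot\nablah]v\|_{L^2} \lesssim \|\nablah v\|_{L^\infty}\|v\|_{H^s},\qquad \|[\partial^\alpha,w]\partial_z v\|_{L^2} \lesssim \|\nablah w\|_{L^\infty}\|v\|_{H^s}+\|w\|_{H^s}\|\partial_z v\|_{L^\infty}.
\]
For $s\ge 3$ we have $H^{s-1}(\Omega)\hookrightarrow L^\infty(\Omega)$, so $\|\nablah v\|_{L^\infty},\|\partial_z v\|_{L^\infty}\lesssim\|v\|_{H^s}$; the quantities $\|\nablah w\|_{L^\infty}$ and $\|w\|_{H^s}$ are bounded by $\|\divh v\|_{H^s}$ plus lower-order pieces via the integral formula for $w$, Minkowski, and vertical Sobolev embedding. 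Young's inequality absorbs a small multiple of $\|\partial_x v_1\|_{H^s}^2+\|\partial_y v_2\|_{H^s}^2$ into the dissipation, leaving
\[
\ddt\|v\|_{H^s}^2 + c\bigl(\|\partial_x v_1\|_{H^s}^2+\|\partial_y v_2\|_{H^s}^2\bigr) \le C(1+\|v\|_{H^s}^2)^{2},
\]
which yields a uniform bound on some $[0,T]$ depending only on $\|v_0\|_{H^s}$. Aubin--Lions compactness then produces the strong solution with the stated regularity; interpolation with $\partial_t v\in L^2(0,T;H^{s-2})$ read off the equation gives $v\in C^0([0,T];H^{s-\kappa})$; and uniqueness follows from an $L^2$ estimate on the difference $(v_1-v_2,w_1-w_2)$, again using $\|w_1-w_2\|_{L^2}\lesssim\|\divh(v_1-v_2)\|_{L^2}$. \emph{The central obstacle} is the commutator $[\partial^\alpha,w]\partial_z v$: since neither $\partial_y v_1,\partial_x v_2$ nor any vertical derivative of $v$ is dissipated, one cannot absorb arbitrary top-order derivatives of $v$ into the left-hand side. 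The argument succeeds precisely because every term produced by Kato--Ponce demands control of $w$ (which the dissipation on $\divh v$ supplies) or of $\partial_z v$ in $L^\infty$ (which $s\ge 3$ supplies), never of the undissipated components of $\nablah v$.
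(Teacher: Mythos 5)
Your a priori estimate is exactly the heart of the paper's argument: the pressure drops out because $\nablah p$ is $z$-independent and $\divh\overline v=0$, the top-order transport terms cancel by $\divh v+\partial_z w=0$ and $w(\pm h)=0$, and the only dangerous commutator contribution is $\|w\|_{H^s}\|v\|_{H^s}^2\lesssim\|\divh v\|_{H^s}\|v\|_{H^s}^2$, which Young's inequality absorbs into the dissipation $\|\partial_x v_1\|_{H^s}^2+\|\partial_y v_2\|_{H^s}^2\ge\tfrac12\|\divh v\|_{H^s}^2$; this is verbatim the differential inequality in Section 5.2. Where you diverge is the construction of approximate solutions. The paper does not run a Galerkin scheme on the half-viscosity system; it regularizes the operator, replacing $A_{||}$ by $A'_n=\operatorname{diag}(\partial_{xx}+\tfrac1n\partial_{yy},\,\partial_{yy}+\tfrac1n\partial_{xx})$, invokes the already-proved well-posedness of the fully horizontally viscous problem (Theorem \ref{th:horviscloc}), derives the above estimate uniformly in $n$, and passes to the limit with Aubin--Lions. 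This buys a clean existence theory for the approximations essentially for free. Your route instead must establish solvability of the truncated systems, and here there is a real (though repairable) soft spot: truncating only the horizontal Fourier modes leaves a system that is still a PDE in $(z,t)$ (a nonlocal transport equation in $z$ coupled to finitely many horizontal modes), so "standard ODE theory" does not apply, while a genuinely finite-dimensional $3$D Galerkin basis threatens both the exact cancellation in $\langle w\partial_z\partial^\alpha v,\partial^\alpha v\rangle$ and the $H^s$-compatibility of the projection --- precisely the difficulties the paper's Section 3 is devoted to in the linear case. If you either justify the truncated-in-$x,y$ system separately or switch to the paper's vanishing-complementary-viscosity regularization, your argument is complete and equivalent in substance.
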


We will need the well-posedness of the primitive 
equations with only horizontal viscosity in the 
proofs of the Theorems (\ref{th:introd2}) and 
(\ref{th:introd3}), but that system is also of 
interest on its own. Due to turbulent mixing in 
the horizontal plane even in the case of full viscosities, 
the horizontal viscosity 
$\nu_1$ is much stronger than the vertical $\nu_2$. 
In the limiting case this means, that we have no 
vertical viscosity and we will consider that system,
\begin{align}\label{eq:primequhorvisc}
\begin{split}
\partial_t v + v \cdot \nablah  v + w \partial_z v - \Deltah v + \nablah  p  & = 0 \qquad\mbox{in }(0,T)\times\Omega,  \\
\partial_z p  & = 0 \qquad\mbox{in }(0,T)\times\Omega,  \\
\divh v + \partial_z w & = 0 \qquad\mbox{in }(0,T)\times \Omega,\\
v(t=0)& = v_0  \!\!\qquad\mbox{in }\Omega
\end{split}
\end{align}
with periodic boundary conditions imposed 
on $v$ and $p$ in the horizontal directions 
and $w(z=\pm h)=0$. Here we set $\nu_1=1$ for simplicity.
The first results for the primitive equations with only 
horizontal viscosity were obtained by Cao, Li and Titi 
 \cite{CaoLiTiti2016} and further investigated by them 
in \cite{CaoLiTiti2017}. In \cite{CaoLiTiti2016} they show 
the global well-posedness for initial data in $H^2(\Omega)$ 
additionally assuming homogeneous Neumann boundary conditions 
for $v$ on  the top $G\times \{h\}$ and the bottom 
$G\times \{-h\}$ of the domain. Their approach is to 
consider the case of full viscosity for which the global 
existence of solutions is known and to derive a priori bounds 
independent of the vertical viscosity $\nu_2$. For this 
boundary conditions there is no formation of a boundary layer 
and by letting $\nu_2$ tend to zero they obtain a solution 
to the primitive equations with only horizontal viscosity. However, 
not all solutions of (\ref{eq:primequhorvisc}) can be found by that 
method. We show, that the well-posedness result holds also without 
the Neumann conditions for $v$ on the top and on the bottom. Our 
approach is to interpret the term $w\partial v_z$ as a transport 
term with a non-constant coefficient $w$, which vanishes on the 
boundary. So there is no transport through the boundary and thus 
we do not need any condition on $v$ there and we obtain 
the following result.
\begin{theorem}\label{th:introd1}
Let $s\geq 2$. Then for any horizontally periodic $v_0\in H^s(\Omega)$ with $\int_{-h}^h\divh v_0(x,y,\xi)\dxi=0$ 
there exists a time $T>0$ and a unique strong solution $v$ to (\ref{eq:primequhorvisc}) with
\begin{align*}
 v & \in L^{\infty}((0,T), H^{s}(\Omega)) \cap C^0([0,T], H^{s-\kappa}(\Omega))\\
\partial_x v,\partial_y v & \in L^2((0,T),H^{s}(\Omega))
\end{align*}
for all $\kappa\in(0,1)$. For $s=2$ this solution extends globally in time.
\end{theorem}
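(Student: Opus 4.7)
The plan is to build a local-in-time $H^s$ solution by a vanishing-vertical-viscosity regularization, extract uniform a priori bounds, and pass to the limit. For each $\varepsilon > 0$ I would consider the auxiliary system obtained by adding $-\varepsilon\partial_{zz} v$ to the momentum equation and imposing the Neumann condition $\partial_z v = 0$ on $\{z = \pm h\}$; this fits the classical Cao--Titi framework and admits a unique global strong solution $v^\varepsilon$. Everything then reduces to establishing an $\varepsilon$-uniform bound
\begin{align*}
\|v^\varepsilon\|_{L^\infty(0,T;H^s)} + \|\nablah v^\varepsilon\|_{L^2(0,T;H^s)} \leq C
\end{align*}
on a time interval $(0,T)$ with $T$ depending only on $\|v_0\|_{H^s}$.

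The heart of the argument is the $H^s$ energy estimate. Differentiating by $\partial^\alpha$ for $|\alpha|\leq s$, pairing with $\partial^\alpha v^\varepsilon$ and summing, the viscous term produces the dissipation $\|\nablah \partial^\alpha v^\varepsilon\|_{L^2}^2 + \varepsilon\|\partial_z \partial^\alpha v^\varepsilon\|_{L^2}^2$. The pressure contribution drops out after integration by parts, because $\nablah p^\varepsilon$ is $z$-independent and the compatibility condition $\int_{-h}^h \divh v^\varepsilon(x,y,\xi,t)\,\dxi = 0$ is preserved by the flow. The horizontal transport $v\cdot\nablah v$ is handled by standard Kato--Ponce commutator estimates, the top-order piece reducing, after one integration by parts in the horizontal variables, to a term of the form $\|\nablah v^\varepsilon\|_{L^\infty}\|v^\varepsilon\|_{H^s}^2$. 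The delicate new term is $\int_\Omega \partial^\alpha(w^\varepsilon \partial_z v^\varepsilon)\cdot\partial^\alpha v^\varepsilon\,\dx$, which I split as
\begin{align*}
\int_\Omega w^\varepsilon\,\partial_z\partial^\alpha v^\varepsilon\cdot\partial^\alpha v^\varepsilon\,\dx + \int_\Omega [\partial^\alpha, w^\varepsilon\partial_z]v^\varepsilon\cdot\partial^\alpha v^\varepsilon\,\dx.
\end{align*}
The key observation is that the first integral can be integrated by parts in $z$ without generating any boundary term, because the preserved compatibility condition forces $w^\varepsilon(\cdot,\pm h,t) = 0$; what remains is $\tfrac{1}{2}\int \divh v^\varepsilon|\partial^\alpha v^\varepsilon|^2 \lesssim \|\nablah v^\varepsilon\|_{L^\infty}\|v^\varepsilon\|_{H^s}^2$. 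This is the precise sense in which no boundary condition on $v$ is needed: $w\partial_z$ acts as a transport operator whose coefficient vanishes on the vertical boundary. The commutator is controlled by Moser-type inequalities, using that $w^\varepsilon = -\int_{-h}^z \divh v^\varepsilon\,\dxi$ inherits the horizontal regularity of $\nablah v^\varepsilon$, with the derivative count closed by the $L^2_t H^{s+1}$ horizontal dissipation together with (anisotropic) Sobolev embeddings.

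Gronwall's lemma then yields an $\varepsilon$-uniform bound on a short time interval, and a weak-$*$ compactness argument combined with the Aubin--Lions lemma extracts a limit $v \in L^\infty((0,T);H^s)\cap C^0([0,T];H^{s-\kappa})$ solving (\ref{eq:primequhorvisc}). Uniqueness follows from an energy estimate for the difference of two solutions at the $L^2$ level, where the same boundary-free integration by parts in $z$ for $w\partial_z v$ is again the one delicate point. For the global result at $s = 2$, I would revisit the $H^2$ a priori bounds of Cao, Li and Titi \cite{CaoLiTiti2016}, replacing every use of the Neumann condition $\partial_z v = 0$ by the vanishing of $w$ at $\{z = \pm h\}$; the same structural cancellation yields global-in-time control of $\|v(t)\|_{H^2}$ and precludes blow-up. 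The main obstacle throughout is keeping $w\partial_z v$ under control without any data on $v$ on the vertical boundary, and the cancellation provided by $w(\pm h) = 0$ is what makes every step of the program close.
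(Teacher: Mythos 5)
You have correctly isolated the structural point that makes the theorem work at the level of a priori estimates: since $w=-\int_{-h}^{z}\divh v\,\dxi$ vanishes at $z=\pm h$, the top-order piece $\int_\Omega w\,\partial_z\partial^\alpha v\cdot\partial^\alpha v$ integrates by parts in $z$ with no boundary term, and the commutator is closed by anisotropic estimates. This matches the estimates in the paper. The gap is in your existence mechanism. Regularizing by $-\varepsilon\partial_{zz}$ with the Neumann condition $\partial_z v^\varepsilon=0$ on $\{z=\pm h\}$ is exactly the Cao--Li--Titi construction that this theorem is designed to go beyond, and it cannot deliver the claimed $\varepsilon$-uniform bound for general $v_0\in H^s(\Omega)$. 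If $\partial_z v_0$ does not vanish on $\{z=\pm h\}$, the initial datum is incompatible with the boundary condition of the regularized parabolic problem: $v^\varepsilon$ must adjust to $\partial_z v^\varepsilon=0$ across a layer of width $O(\sqrt{\varepsilon})$, in which $\partial_{zz}v^\varepsilon=O(\varepsilon^{-1/2})$, so $\|v^\varepsilon\|_{H^2}$ is \emph{not} uniformly bounded --- precisely the norm you need for $s\geq 2$. (Relatedly, in the $H^s$ energy identity the boundary terms produced by $-\varepsilon\int\partial^\alpha\partial_{zz}v^\varepsilon\cdot\partial^\alpha v^\varepsilon$ only vanish for purely horizontal $\alpha$; for $\alpha$ containing two or more $z$-derivatives they survive unless higher-order compatibility holds.) Moreover, the limit problem preserves homogeneous Neumann data in time (Proposition \ref{pro:boundcondz}), so the vanishing-viscosity family with Neumann conditions can only ever capture the compatible solutions; for generic $v_0$ your scheme does not converge to the solution the theorem asserts. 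Any other choice of vertical boundary condition for the regularization (Dirichlet, periodic) runs into the same incompatibility.

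The paper avoids vertical regularization altogether. It first solves a \emph{linearized} transport--horizontal-diffusion equation $\partial_t v+av+b\cdot\nablah v+w\partial_z v-\Deltah v=f$ by a Galerkin method in the anisotropic spaces $H=H^1((-h,h),L^2(G))$ and $V=H^1((-h,h),H^1(G))$, where no boundary condition in $z$ is imposed and the hypothesis $w(z=\pm h)=0$ alone makes the transport term harmless (Theorem \ref{th:linex}, Corollary \ref{cor:linreg}). Local existence for the nonlinear problem then follows by a Picard-type iteration built on the splitting $v=\overline v+\tilde v$ into the vertical average (a $2D$ Navier--Stokes-type system) and the average-free part (the linearized equation above), with uniform bounds of the kind you derived and Aubin--Lions compactness. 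If you want to keep a regularization-based proof, you would have to regularize in a way that does not impose a vertical boundary condition; as written, the construction step of your argument fails exactly on the class of data that distinguishes Theorem \ref{th:introd1} from \cite{CaoLiTiti2016}. The global-in-time part for $s=2$ is, as you say, a matter of checking that the Cao--Li--Titi bounds never use the Neumann condition except on $\partial_{zz}v$ terms that are absent here.
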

We will prove the local existence result in detail, 
the global existence then follows from the 
estimates proven in \cite{CaoLiTiti2016} 
for initial data in $H^2(\Omega)$.
Although no boundary conditions for $v$ on 
the top and the bottom are not needed 
for the well-posedness, homogeneous Neumann 
boundary conditions are preserved in time by 
the equations if they hold for the initial value.

This paper is organized as follows. In 
section 2 we list the most important definitions 
and notations and we reformulate the equation 
(\ref{eq:primequhorvisc}) for $v$ into a system 
of equations for the mean value $\overline v$ 
and the remainder $\tilde v$.
In section 3 we show the well-posedness of a 
linearized version of the primitive equations with 
horizontal viscosity by a Galerkin-approach, and in 
section 4 we use that result to prove Theorem (\ref{th:introd1}).
In section 5 we turn to the case of half horizontal 
viscosity and give the proofs for 
the Theorems (\ref{th:introd2}) and (\ref{th:introd3}).

\section{Notations and basic Lemmas}  
By $L^2(\Omega), L^2(G)$ we denote the standard Lebesgue spaces with the scalar products
\begin{align*}
\langle f,g \rangle_{\Omega}:= \int_{\Omega}f(x,y,z)g(x,y,z)\; \operatorname d (x,y,z)
\end{align*}
and $\langle f,g \rangle_{G}$ defined analogously, by $\norm{f}_{L^2(\Omega)}$ and $\norm{f}_{L^2(G)}$ 
we denote the induced norm.  We drop $\Omega$ and $G$ in the notation if the dependence is obvious.

For a function $f\in L^{\infty}((0,T), L^{\infty}(\Omega))$ we use the abbreviation
\begin{align*}
\norm{f}_{\infty}:=\sup_{t\in(0,T)}\norm{f(t)}_{L^{\infty}}.
\end{align*}
We write $v(t=0)$ for the function $v|_{t=0}$ and $v(z=\pm h)$ as a short form of $v|_{z=h}$ and $v|_{z=-h}$.

For $s\in\N$ the space $H^s(\Omega)$ consists of $f\in L^2(\Omega)$ such that $\nablatoalpha f \in L^2(\Omega)$ for $|\alpha|\leq s$ endowed with the norm
\begin{align*}
\norm{f}_{H^s(\Omega)}=\sum_{|\alpha|\leq s} \norm{\nablatoalpha f}_{L^2(\Omega)}.
\end{align*}
Here we used the multi-index notation 
$\nablatoalpha=\partial_x^{\alpha_1} 
\partial_y^{\alpha_2}\partial_z^{\alpha_3}$ 
for $\alpha\in\N_0^3$. The spaces $H^s(G)$ 
are defined analogously, and we will again 
just write $\norm{f}_{H^s}$ if there is no 
ambiguity. If $s\notin\N$ the spaces $H^s(\Omega), H^s(G)$ 
are defined by complex interpolation, see \cite{Triebel} for details.

To handle the periodic boundary conditions 
in the horizontal variables we set
\begin{align*}
H^s_{\per}(\Omega):=\{f\in H^s(\Omega)|f \mbox{ is periodic of order }s-1\mbox{ on }\partial G\times (-h,h)\}
\end{align*}
and
\begin{align*}
C^{\infty}_{\per}(\Omega):=\{f\in C^{\infty}(\Omega)|f \mbox{ is periodic of arbitrary order on }\partial G\times (-h,h)\}.
\end{align*}
It is easy to see that $H^s_{\per}(\Omega)$ 
equipped with the $H^s$-norm is a Banach space 
and that $C^{\infty}_{\per}(\Omega)$ is a dense subset.

When investigating the case of half horizontal 
viscosity we need furthermore a subset of 
$H^s(\Omega)$, which reflects the Rayleigh condition 
mentioned in the introduction. For $s\geq 3$ and $\eta>1$ 
we define 
\begin{align*}
H^s_{\per,\eta}(\Omega):=\left\{f\in H^s_{\per}(\Omega)\left|\frac{1}{\eta}\leq \frac{1}{|\partial_z f(x,y,z)|}\leq \eta\right.\right\}
\end{align*}
with
\begin{align*}
\norm{f}_{H^s_{\eta}}^2=\norm{f}^2_{H^{s-1}}+\norm{\partial_z^s f}^2_{L^2}+\sum_{|\alpha|=s,\alpha_3=0} \norm{\frac{\nablatoalpha f}{\sqrt{|\partial_{z}f|}}}_{L^2}^2.
\end{align*}
For $f\in H^s_{\per,\eta}(\Omega)$ this expression is equivalent to the $H^s(\Omega)$-norm. %We will have that $\partial_z v(t)$ takes values in $H^s_{\per,\eta}(\Omega)$, that's why we only the first derivative in the $z$-direction enters its definition.

The following inequalities will be helpful 
when we show a-priori estimates for the 
solutions to the primitive equations.  
\begin{lemma}\label{lemma:lowreg}\ \\
a) Let $f,h,\nablah h \in L^2(\Omega)$ and $g\in H^1(\Omega)$, then
\begin{align*}
|\langle fg,h \rangle| & \leq c \norm{f}_{L^2(\Omega)}\norm{g}_{H^1(\Omega)}\left( \norm{\nablah h}_{L^2(\Omega)}^{1/2}\norm{h}_{L^2(\Omega)}^{1/2} +\norm{h}_{L^2(\Omega)}\right).
\end{align*}
b) Let $f  \in H^2(\Omega)$, then
\begin{align*}
\norm{f}_{L^{\infty}(\Omega)} & \leq c \norm{f}_{H^2(\Omega)}^{3/4} \norm{f}_{L^2(\Omega)}^{1/4}.
\end{align*}
\end{lemma}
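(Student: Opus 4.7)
Both inequalities are anisotropic Ladyzhenskaya--Gagliardo--Nirenberg type estimates. For (a), my plan is to apply H\"older in the horizontal variables on each $z$-slice, use the two-dimensional Ladyzhenskaya inequality on the factor involving $h$, and exploit the full three-dimensional $H^1$-regularity of $g$ via an anisotropic embedding into $L^\infty_z L^4_{xy}$. Concretely, I first estimate
\begin{align*}
\int_G |fgh|(\cdot,\cdot,z)\dx\dy \leq \norm{f(\cdot,\cdot,z)}_{L^2(G)}\norm{g(\cdot,\cdot,z)}_{L^4(G)}\norm{h(\cdot,\cdot,z)}_{L^4(G)},
\end{align*}
and bound the last $L^4$-norm by the 2D Ladyzhenskaya inequality
\begin{align*}
\norm{h(\cdot,\cdot,z)}_{L^4(G)}^2 \leq c\,\norm{h(\cdot,\cdot,z)}_{L^2(G)}\bigl(\norm{\nablah h(\cdot,\cdot,z)}_{L^2(G)}+\norm{h(\cdot,\cdot,z)}_{L^2(G)}\bigr).
\end{align*}
Integrating in $z$ and applying H\"older in $z$ with exponents $(2,\infty,2)$ then yields
\begin{align*}
|\langle fg,h\rangle| \leq \norm{f}_{L^2(\Omega)}\,\sup_{z}\norm{g(\cdot,\cdot,z)}_{L^4(G)}\,\norm{h}_{L^2_z L^4_{xy}},
\end{align*}
and Cauchy--Schwarz in $z$ applied to the 2D Ladyzhenskaya bound produces the factor $\norm{\nablah h}_{L^2(\Omega)}^{1/2}\norm{h}_{L^2(\Omega)}^{1/2}+\norm{h}_{L^2(\Omega)}$.

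The main step is therefore the anisotropic embedding $\sup_z \norm{g(\cdot,\cdot,z)}_{L^4(G)} \leq c\norm{g}_{H^1(\Omega)}$; this is the only nontrivial ingredient, since slice-wise Sobolev embedding alone cannot give uniform control in $z$. My plan is to argue for smooth $g$ by density, applying the fundamental theorem of calculus to $z\mapsto \int_G |g(x,y,z)|^4\dx\dy$ and averaging the resulting identity over an auxiliary base point $z_0\in(-h,h)$, which yields
\begin{align*}
\sup_z\norm{g(\cdot,\cdot,z)}_{L^4(G)}^4 \leq c\norm{g}_{L^4(\Omega)}^4 + c\int_\Omega |g|^3|\partial_z g|\dx\dy\dz.
\end{align*}
The first term is controlled by $c\norm{g}_{H^1(\Omega)}^4$ via the three-dimensional Sobolev embedding $H^1(\Omega)\hookrightarrow L^6(\Omega)$, and the second by H\"older with exponents $2$ and $6$ as $\norm{g}_{L^6(\Omega)}^3\norm{\partial_z g}_{L^2(\Omega)} \leq c\norm{g}_{H^1(\Omega)}^4$. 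The remaining manipulations in (a) are routine.

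For (b), $\norm{f}_{L^\infty(\Omega)}\leq c\norm{f}_{H^2(\Omega)}^{3/4}\norm{f}_{L^2(\Omega)}^{1/4}$ is Agmon's inequality in three dimensions. I would deduce it from the Gagliardo--Nirenberg interpolation $\norm{f}_{L^\infty(\R^3)}\leq c\norm{D^2 f}_{L^2(\R^3)}^{3/4}\norm{f}_{L^2(\R^3)}^{1/4}$, applied to an $H^2$-norm preserving extension of $f$ to $\R^3$ obtained by periodic extension in the horizontal variables and reflection across $z=\pm h$.
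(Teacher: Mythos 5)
Your argument for (a) is correct and follows the same route as the paper: H\"older on horizontal slices, the two-dimensional Ladyzhenskaya inequality for $h$, Cauchy--Schwarz in $z$, and the anisotropic bound $\sup_z\norm{g(\cdot,\cdot,z)}_{L^4(G)}\leq c\norm{g}_{H^1(\Omega)}$. The one difference is that the paper simply cites \cite[Lemma 2.3]{CaoLiTiti2017} for this last embedding (in the sharper multiplicative form $c(\norm{g}_{L^2}+\norm{\partial_z g}_{L^2})^{1/2}(\norm{g}_{L^2}+\norm{\nablah g}_{L^2})^{1/2}$), whereas you prove the weaker additive version from scratch via the fundamental theorem of calculus applied to $z\mapsto\norm{g(\cdot,\cdot,z)}_{L^4(G)}^4$, the embedding $H^1(\Omega)\hookrightarrow L^6(\Omega)$, and H\"older; this is self-contained and entirely sufficient here, since the target estimate only involves $\norm{g}_{H^1(\Omega)}$. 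For (b) the paper just invokes Agmon's inequality. Your sketch is fine in spirit, but two small technical points in the stated construction would need repair: a plain even reflection across $z=\pm h$ produces a jump in $\partial_z$ and hence does not preserve $H^2$ (one needs a higher-order Hestenes-type reflection or a Stein extension operator), and the horizontally periodic extension is not in $L^2(\R^3)$, so a cutoff is required before applying the whole-space Gagliardo--Nirenberg inequality. Neither point affects the validity of the lemma, which is standard.
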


\begin{proof}
a) Consider
\begin{align*}
|\langle fg,h \rangle_{\Omega}|	%& \leq \int_{-h}^h \int_{G} |(f g h)(x,y,z) |\operatorname d(x,y) \dz\\
								& \leq \int_{-h}^h\norm{f(z) g(z) h(z)}_{L^1(G)}\dz\\
								%& \leq \int_{-h}^h \norm{f(z)}_{L^2(G)}\norm{g(z)}_{L^4(G)} \norm{h(z)}_{L^4(G)} \dz\\
								& \leq \norm{g}_{L^{\infty}((-h,h),L^4(G))}\int_{-h}^h \norm{f(z)}_{L^2(G)} \norm{h(z)}_{L^4(G)} \dz\\
								%& \leq \norm{g}_{L^{\infty}((-h,h),L^4(G))}\norm{f}_{L^2((-h,h),L^2(G))}\norm{h}_{L^2((-h,h),L^4(G))}\\
								& \leq \norm{g}_{L^{\infty}((-h,h),L^4(G))}\norm{f}_{L^2(\Omega)}\norm{h}_{L^2((-h,h),L^4(G))}.
\end{align*}
The Gagliardo-Nirenberg inequality 
\begin{align*}
\norm{h(z)}_{L^4(G)}\leq c(\norm{\nablah h(z)}^{1/2}_{L^2(G)}\norm{h(z)}^{1/2}_{L^2(G)}+\norm{h(z)}_{L^2(G)})
\end{align*}
gives
\begin{align*}
\norm{h}_{L^2((-h,h),L^4(G))}^2 %& =  \int_{-h}^h\norm{h(z)}^2_{L^4(G)}\dz\\
								& \leq c \int_{-h}^h\norm{\nablah h(z)}_{L^2(G)}\norm{h(z)}_{L^2(G)}+\norm{h(z)}^2_{L^2(G)}\dz\\
								%& \leq  c\norm{\nablah h}_{L^2((-h,h),L^2(G))}\norm{h}_{L^2((-h,h),L^2(G))}\\
								& \leq  c\norm{\nablah h}_{L^2(\Omega)}\norm{h}_{L^2(\Omega)}+c\norm{h}^2_{L^2(\Omega)}.
\end{align*}
In \cite[Lemma 2.3]{CaoLiTiti2017} it has been shown that
\begin{align*}
\norm{g}_{L^{\infty}((-h,h),L^4(G))} \leq c \ (\norm{g}_{L^2(\Omega)} +\norm{\partial_z g}_{L^2(\Omega)})^{1/2}(\norm{g}_{L^2(\Omega)} +\norm{\nablah g}_{L^2(\Omega)})^{1/2}
\end{align*}
and thus
\begin{align*}
|\langle fg,h \rangle|	
\leq c \norm{g}_{H^1(\Omega)} \norm{f}_{L^2(\Omega)}\left( \norm{\nablah h}_{L^2(\Omega)}^{1/2}\norm{h}_{L^2(\Omega)}^{1/2}+ \norm{h}_{L^2(\Omega)}\right).
\end{align*}
b) This is the well-known Agmon's inequality.
\end{proof}

The following Aubin-Lions Lemma is 
shown in \cite[Corollary 4]{Simon1987}.
\begin{lemma}\label{le:aubinlions}
Let $T>0$ and $X,Y$ and $Z$ be Banach 
spaces such that $X$ is compactly embedded 
in $Y$, and $Y$ is continuously embedded in $Z$.
\begin{itemize}
 \item[(i)] If $(f_n)_n\subset L^2((0,T),X)$ is 
 bounded and  $(\partial_t f_n)_n$ is bounded in 
 $L^2((0,T),Z)$ then there exists an in $L^2((0,T),Y)$ 
 convergent subsequence.
 \item[(ii)] If $(f_n)_n\subset L^{\infty}((0,T),X)$ 
 is bounded and  $(\partial_t f_n)_n$ is bounded in 
 $L^2((0,T),Z)$ then there exists an in $C^0([0,T],Y)$ 
 convergent subsequence.
\end{itemize}

\end{lemma}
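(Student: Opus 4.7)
The plan is to reduce both statements to a convergence claim in the weaker space $Z$ and then to upgrade to $Y$ via an Ehrling-type interpolation inequality.

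First I would establish Ehrling's lemma: for every $\epsilon>0$ there exists $C_\epsilon>0$ such that
\begin{align*}
\|u\|_Y \leq \epsilon\|u\|_X + C_\epsilon\|u\|_Z \qquad \text{for all } u\in X.
\end{align*}
This is a standard contradiction argument using that $X$ is compactly embedded in $Y$ and $Y$ continuously in $Z$: if the inequality failed for some $\epsilon_0>0$, one would extract $u_n$ with $\|u_n\|_Y=1$, $\|u_n\|_X$ bounded, and $\|u_n\|_Z\to 0$; compactness gives a subsequence converging in $Y$ to some $u$ with $\|u\|_Y=1$, while continuity forces $u=0$ in $Z$, a contradiction.

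Next I would extract a subsequence converging in the $Z$-topology. The bound on $(\partial_t f_n)$ in $L^2((0,T),Z)$ gives the equi-H\"older estimate
\begin{align*}
\|f_n(t)-f_n(s)\|_Z \leq |t-s|^{1/2}\|\partial_t f_n\|_{L^2((0,T),Z)},
\end{align*}
so $\{f_n\}$ is equicontinuous as a family of maps $[0,T]\to Z$. Combined with the pointwise precompactness of $\{f_n(t)\}$ in $Y$ (hence in $Z$) coming from the bound in $X$ and the compact embedding $X\hookrightarrow\hookrightarrow Y$, a diagonal Arzel\`a--Ascoli argument on a countable dense subset of $[0,T]$ produces a subsequence converging in $C^0([0,T],Z)$ under hypothesis (ii); under hypothesis (i), one obtains convergence in $L^2((0,T),Z)$ either by the same Arzel\`a--Ascoli argument followed by dominated convergence, or by approximating the $f_n$ by piecewise constant simple functions in time and passing to the limit.

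Finally, to upgrade to convergence in $Y$, I would apply Ehrling pointwise in $t$ to a Cauchy difference,
\begin{align*}
\|f_n(t)-f_m(t)\|_Y \leq \epsilon\bigl(\|f_n(t)\|_X+\|f_m(t)\|_X\bigr) + C_\epsilon\|f_n(t)-f_m(t)\|_Z,
\end{align*}
and either integrate in time (case (i)) or take the supremum (case (ii)). The uniform bound on $f_n$ in $L^2((0,T),X)$ or $L^\infty((0,T),X)$ absorbs the first term into a fixed multiple of $\epsilon$, while the second term vanishes along the subsequence by the previous step. Sending $\epsilon\to 0$ yields the Cauchy property in $L^2((0,T),Y)$ or $C^0([0,T],Y)$ respectively.

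The main obstacle is the middle step: pointwise-in-time precompactness in $Y$ is immediate from $X\hookrightarrow\hookrightarrow Y$, but producing a subsequence whose $Z$-convergence is uniform (or in $L^2$) in $t$ requires the time-derivative bound for equicontinuity together with a careful diagonal Arzel\`a--Ascoli extraction. This is precisely the technical heart of Simon's argument in \cite[Corollary 4]{Simon1987}.
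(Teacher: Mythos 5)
The paper does not prove this lemma at all: it is quoted verbatim from \cite[Corollary 4]{Simon1987}, so any proof you give is necessarily a ``different route'' from the paper's. Your outline (Ehrling's inequality, compactness in the weak space $Z$ via the time-derivative bound, then interpolation back up to $Y$) is the standard architecture of Simon's argument, and for part (ii) it is essentially complete: the $L^\infty((0,T),X)$ bound gives $\|f_n(t)\|_X\leq M$ for all $t$ in a common set of full measure, the $L^2$ bound on $\partial_t f_n$ gives the $|t-s|^{1/2}$ equicontinuity in $Z$, and the diagonal Arzel\`a--Ascoli extraction plus the pointwise Ehrling estimate close the argument (modulo the routine verification that each $f_n$, and hence the limit, is actually continuous $[0,T]\to Y$, which follows by approximating any exceptional $t$ from within the full-measure set and using precompactness in $Y$ together with the identification in $Z$).

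The one genuine soft spot is in part (i). There the hypothesis is only an $L^2((0,T),X)$ bound, which gives \emph{no} uniform-in-$n$ pointwise control of $\|f_n(t)\|_X$: for each fixed $t$ the set $\{f_n(t)\}_n$ need not be bounded in $X$, so the ``pointwise precompactness of $\{f_n(t)\}$ in $Y$'' that you invoke is simply not available, and ``the same Arzel\`a--Ascoli argument followed by dominated convergence'' does not apply. Your fallback is the correct repair and should be promoted to the main argument: replace $f_n$ by its piecewise time-averages $f_n^h$ over a partition of mesh $h$. By Cauchy--Schwarz each local average is bounded in $X$ by $h^{-1/2}\|f_n\|_{L^2((0,T),X)}$, uniformly in $n$ for fixed $h$, so for fixed $h$ a diagonal extraction over the finitely many subintervals gives a subsequence with $f_n^h$ convergent in $L^2((0,T),Y)$; the translation estimate $\|f_n(\cdot+\tau)-f_n\|_{L^2((0,T-\tau),Z)}\leq \tau\|\partial_t f_n\|_{L^2((0,T),Z)}$ coming from the derivative bound shows $\|f_n-f_n^h\|_{L^2((0,T),Z)}\leq Ch$ uniformly in $n$, and Ehrling converts this, together with the $L^2(X)$ bound, into smallness of $\|f_n-f_n^h\|_{L^2((0,T),Y)}$. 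A final diagonalization over $h=1/k$ then yields the Cauchy property in $L^2((0,T),Y)$. With that substitution your proof is correct.
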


Now let us reformulate the primitive 
equations (\ref{eq:primequhorvisc}) where 
we replace the operator $\Deltah$ by any 
constant coefficient operator $A$ acting 
only in the horizontal directions. It is 
easy to see that the divergence free 
condition $\partial_z w+\divh v=0$ and the 
boundary condition $w(z=\pm h)=0$ are equivalent to
\begin{align*}
w(t,x,y,z) =-\divh \int_{-h}^{z}v(t,x,y,\xi) \dxi \qquad \mbox{and} \qquad
 \divh \int_{-h}^h v(t,x,y,\xi)\dxi=0.
\end{align*}
This means, that the mean value of $v$ in the vertical direction
\begin{align*}
\overline v(t,x,y):=\frac{1}{2h}\int_{-h}^h v(t,x,y,\xi)  \dxi
\end{align*}
is divergence free. The pressure is constant 
in the vertical direction, and thus $\overline p = p$. 
For $\overline v, p$ and the remainder
\begin{align*}
\tilde v=v-\overline v
\end{align*}
we obtain the system of coupled equations 
\begin{align}\label{eq:primequhorviscbar}
\begin{split}
\partial_t \overline v+ \overline v \cdot \nablah  \overline v-A \overline v + \nablah  p  & =-K(\tilde  v) , \\
\divh \overline  v & = 0,\\
\overline v(t=0)& =\overline v_0  
\end{split}
\end{align}
and
\begin{align}\label{eq:primequhorvisctilde}
\begin{split}
\partial_t \tilde v + \tilde v \cdot \nablah  \tilde v & + \overline v \cdot \nablah  \tilde v+ \tilde v \cdot \nablah  \overline v + w \partial_z \tilde v - A \tilde v  = K(\tilde v),\\
w(t,x,y,z) & =-\divh \int_{-h}^{z} \tilde v(t,x,y,\xi) \dxi,\\
 \tilde v(t=0)  & =\tilde v_0  
\end{split} 
\end{align}
both with periodic boundary conditions in the horizontal directions and where the coupling term $K(\tilde v)$ is given by
\begin{align}\label{eq:primequcoupling}
\begin{split}
& K(\tilde v)(t,x,y) \\
&\quad  =\frac{1}{2h}\int_{-h}^h \tilde v(t,x,y,\xi) \cdot \nablah  \tilde v(t,x,y,\xi)+  \tilde v(t,x,y,\xi)\; \divh \tilde v(t,x,y,\xi)  \dxi.
\end{split}
\end{align}
Therefore, the well-posedness of the primitive equations (\ref{eq:primequhorvisc}) with periodic boundary conditions in the horizontal directions and $w(z=\pm h)=0$ is equivalent to the well-posedness of (\ref{eq:primequhorviscbar})-(\ref{eq:primequcoupling}) with periodic boundary conditions in the horizontal directions, and the same holds for the cases $A=A_{\perp}$ and $A=A_{||}$.

\section{A linearized equation}
For given functions $w,a,b$ and $f$ and initial data $v_0$ we consider the equation
\begin{align}\label{eq:primequlin}
\begin{split}
\partial_t v+av+ b\cdot\nablah v+ w \partial_z v - \Deltah v  & = f \text{ in } (0,T)\times\Omega,  \\
v(t=0) & =v_0 \text{ in } \Omega
\end{split}
\end{align}
with periodic boundary conditions in the horizontal directions and show the existence of a solution by a Galerkin-approach. Note that if $w(z=\pm h)=0$ we do not need any boundary condition for $v$ in the vertical direction.
% The Galerkin-approach is the only one which works (easily) for the combination of an elliptic operator in the horizontal direction with a time depending transport in the vertical direction and a time-dependent coefficient.\\

This is a linearized version of (\ref{eq:primequhorvisctilde}) for $A=\Deltah$, and based on its well-posedness we will show the local existence of solutions to the primitive equations with horizontal viscosity.

We work in the spaces
\begin{align*}
 H:=\{v\in L^2(\Omega)| \partial_z v\in L^2(\Omega)\}=H^1((-h,h),L^2(G))
\end{align*}
equipped with the scalar product $\langle u,v\rangle_H:=\langle u,v\rangle_{\Omega}+\langle \partial_z u,\partial_z v\rangle_{\Omega}$ and
\begin{align*}
 V:=\{v\in H| \nablah v\in H\}\cap H^1_{\per}(\Omega)=H^1((-h,h),H^1(G))\cap H^1_{\per}(\Omega)
\end{align*}
with the scalar product $\langle u,v\rangle_V:=\langle u,v\rangle_{H}+\langle \nablah u,\nablah v\rangle_{H}$. By $V'$ we denote the dual space of $V$, i.e., 
\begin{align*}
 V'=H^1((-h,h),H^{-1}(G)).
\end{align*}
We will also denote the dual pairing in $V\times V'$ by
$\langle \cdot ,\cdot\rangle_H$ to keep the notation simple.
For $g,h\in L^2((0,T),L^2(\Omega))$ we denote by
\begin{align*}
\langle g,h \rangle_{T}:=\int_{0}^T \int_{\Omega}g(t,x,y,z)h(t,x,y,z)\; \operatorname d (x,y,z)\dt
\end{align*}
the scalar product in space and time.

The solution we obtain in the first step 
will be a weak solution, where weak means 
"weak with respect to $x,y$", i.e., that for 
all $\varphi\in C_c^{\infty}((0,T),L^2((-h,h),H^1_{\per}(G)))$
\begin{multline*}
-\langle  v,\partial_t \varphi\rangle_T+\langle  av, \varphi\rangle_T+\langle   b\cdot\nablah v, \varphi\rangle_T+\langle w \partial_z v,\varphi\rangle_T + \langle \nablah  v,\nablah \varphi\rangle_T  =\langle f ,\varphi\rangle_T.
\end{multline*}
With this notion of solution we then 
have the following existence result.
\begin{theorem}\label{th:linex}
Let $v_0\in H$, $w,w_z,a,a_z,b=(b_1,b_2),b_z \in L^{\infty}((0,T)\times\Omega)$ with $w(z=\pm h)=0$ and $f\in L^2((0,T),V')$. Then there is a unique weak solution
\begin{align*}
v\in L^2((0,T),V) \cap C^0([0,T], H)\cap H^1((0,T),L^2((-h,h),H^{-1}(G)))
\end{align*}
to (\ref{eq:primequlin}) with
\begin{multline*}
\|v\|_{L^{\infty}((0,T),H)}^2+\|v\|_{L^2((0,T),V)}^2\\
\qquad \leq \left(\|v_0\|_{H}^2+2\|f\|_{L^2((0,T),V')}^2 \right) e^{\left(\frac12+\|w_z\|_{\infty}+2\|(a,a_z)\|_{\infty}+2\|(b,b_z)\|_{\infty}^2\right) T}.
\end{multline*}
\end{theorem}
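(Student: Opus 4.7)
The plan is a standard Galerkin approximation tailored to the anisotropic space $H$: build finite-dimensional approximate solutions, derive energy estimates uniform in the approximation parameter that exploit the boundary condition $w(z=\pm h)=0$, and pass to the limit.

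First I would fix a Schauder basis $\{\psi_k\}_{k\in\N}$ of $V$ that is orthogonal in $H$. A convenient choice is the tensor product of horizontal Fourier modes $e^{i(k_1 x+k_2 y)}$ with Neumann eigenfunctions of $1-\partial_{zz}$ on $(-h,h)$ (which are complete in $H^1((-h,h))$), enumerated as a single sequence. Writing $v_n(t)=\sum_{k\leq n}\alpha_k^n(t)\psi_k$ and projecting the equation onto the span yields a linear ODE system with essentially bounded coefficients, so Carath\'eodory theory gives local-in-time solvability; the a priori bound below extends $v_n$ to $[0,T]$.

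The heart of the proof is the uniform energy estimate in the $H$-norm. Testing the projected equation with $v_n$ in $L^2(\Omega)$ gives
\begin{align*}
\tfrac12\ddt\|v_n\|^2+\|\nablah v_n\|^2 = \langle f,v_n\rangle - \langle a v_n,v_n\rangle - \langle b\cdot\nablah v_n,v_n\rangle - \langle w\partial_z v_n,v_n\rangle.
\end{align*}
The horizontal transport term reduces to $\tfrac12\langle\divh b,v_n^2\rangle$ by periodicity, while the vertical transport term, which carries the main regularity obstruction due to the low regularity of $w$, reduces to $\tfrac12\langle w_z,v_n^2\rangle$ thanks to $w(z=\pm h)=0$; no boundary term appears. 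Testing the $\partial_z$-differentiated equation with $\partial_z v_n$ produces an analogous identity with extra cross-terms such as $\langle a_z v_n,\partial_z v_n\rangle$ and $\langle b_z\cdot\nablah v_n,\partial_z v_n\rangle$, which I would absorb by Young's inequality into $\|\nablah \partial_z v_n\|^2$ and into $\|(v_n,\partial_z v_n)\|_H^2$. Combining the two, handling $\langle f,v_n\rangle$ via the $V$-$V'$ duality bound $2\|f\|_{V'}^2+\tfrac18\|v_n\|_V^2$, and applying Gr\"onwall produces exactly the claimed constant. The choice of Neumann eigenfunctions in $z$ makes the $\partial_z$-test legitimate at the Galerkin level because $1-\partial_{zz}$ leaves each finite span invariant, so the formal $\partial_z$-test is equivalent to testing against $v_n-\partial_{zz}v_n$, which lies in the span.

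With the uniform bound, $(v_n)$ is bounded in $L^\infty((0,T),H)\cap L^2((0,T),V)$, and the equation itself yields $\partial_t v_n$ bounded in $L^2((0,T),L^2((-h,h),H^{-1}(G)))$. Lemma~\ref{le:aubinlions} applied with $X=V$, $Y=H$, $Z=L^2((-h,h),H^{-1}(G))$ extracts a subsequence converging strongly in $L^2((0,T),H)$, while weak and weak-$\ast$ compactness handle the dissipative and $L^\infty_t$ parts. Since the equation is linear in $v$ with $L^\infty$ coefficients, passing to the limit in the weak formulation is direct. Continuity $v\in C^0([0,T],H)$ then follows from $v\in L^2((0,T),V)\cap H^1((0,T),V')$ via the Lions-Magenes lemma, and uniqueness is immediate by applying the same energy estimate to the difference of two solutions.

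The main obstacle I anticipate is the bookkeeping in the $\partial_z$-differentiated estimate: every cross-term carrying derivatives of $a$, $b$, $w$ must be absorbed either into $\|v_n\|_V^2$ or into the Gr\"onwall quantity $\|v_n\|_H^2$ with constants matching the stated exponent $\tfrac12+\|w_z\|_\infty+2\|(a,a_z)\|_\infty+2\|(b,b_z)\|_\infty^2$. The cancellation from $w(z=\pm h)=0$ is essential precisely here, since it is what prevents the vertical transport term in the $\partial_z v_n$ estimate from demanding an extra vertical derivative of $v_n$, which the space $H$ does not control.
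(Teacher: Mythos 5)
Your Galerkin scheme and energy estimate coincide with the paper's: testing the projected equation with $v_n$ in $L^2(\Omega)$ and testing the $\partial_z$-differentiated equation with $\partial_z v_n$ is exactly the paper's single test in the $H$-inner product, and the cancellation from $w(z=\pm h)=0$ is exploited identically. Two steps in your limit passage fail as written, however. First, Lemma \ref{le:aubinlions} with $X=V$, $Y=H$ is inapplicable: $V=H^1((-h,h),H^1(G))$ is \emph{not} compactly embedded in $H=H^1((-h,h),L^2(G))$, because membership in $V$ gives no gain of regularity in $z$ (take $v_n=n^{-1}\sin(nz)\phi(x,y)$: this is bounded in $V$, but $\partial_z v_n=\cos(nz)\phi$ has no $L^2$-convergent subsequence). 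Second, the Lions--Magenes lemma does not deliver $v\in C^0([0,T],H)$, because the dual of $V$ with pivot $H$ is $V'=H^1((-h,h),H^{-1}(G))$, whereas the equation only controls $\partial_t v$ in the strictly larger space $L^2((-h,h),H^{-1}(G))$: the term $\partial_z(w\,\partial_z v)=w_z\partial_z v+w\,\partial_{zz}v$ demands a second $z$-derivative that neither $V$ nor $H$ provides. The paper circumvents both problems by exploiting linearity: running the same energy estimate on the difference $v_m-v_n$ shows the Galerkin sequence is Cauchy in $C^0([0,T],H)\cap L^2((0,T),V)$, which yields strong convergence and the continuity into $H$ simultaneously. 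This is the repair you should adopt; weak convergence alone also suffices to pass to the limit in the linear equation, but it does not give $C^0([0,T],H)$.

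Two smaller points. Your reduction of $\langle b\cdot\nablah v_n,v_n\rangle$ to $\tfrac12\langle \divh b,v_n^2\rangle$ is not available, since $b$ is only assumed to lie in $L^{\infty}$; the term must be estimated directly by $\|b\|_{\infty}\|\nablah v_n\|\,\|v_n\|$ and absorbed via Young's inequality, which is in fact what produces the square in the constant $2\|(b,b_z)\|_{\infty}^2$ of the stated exponent. Finally, after passing to the limit you still owe the density argument that upgrades the weak formulation from test functions in the Galerkin spans (which carry the extra $z$-regularity hidden in $(1-\partial_{zz})\varphi$) to arbitrary test functions in $L^2((-h,h),H^1_{\per}(G))$; with your cosine basis this is routine, but it must be recorded, as the paper does.
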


\begin{proof}
Uniqueness: Let $v_1,v_2$ be weak solutions 
to the same initial data. Then the difference 
$v:=v_1-v_2$ solves
\begin{align*}
\partial_t v+av+ b\cdot\nablah v+ w \partial_z v - \Deltah v  = 0, \\
v(t=0)=0.
\end{align*}
$v$ is regular enough to test this equation 
with itself in $L^2(\Omega)$, which yields
 \begin{align*}
\frac12 \partial_t \|v\|_{L^2}^2+ \| \nablah v \|_{L^2}^2 
%& =- \langle  av, v\rangle_{\Omega}-\langle   b\cdot\nablah v, v\rangle_{\Omega}-\langle w \partial_z v,v\rangle_{\Omega} \\
& =- \langle  av, v\rangle_{\Omega}-\langle   b\cdot\nablah v, v\rangle_{\Omega}+\frac12 \langle w_z \cdot  v,v \rangle_{\Omega} \\
& \leq \|a\|_{\infty}\|v\|_{L^2}^2+\|b\|_{\infty}\|\nablah v\|_{L^2}\|v\|_{L^2} + \frac12 \|w_z\|_{\infty}  \|v\|_{L^2}^2\\
& \leq \left(\|a\|_{\infty}+\frac12 \|b\|^2_{\infty} + \frac12\|w_z\|_{\infty}\right) \|v\|_{L^2}^2 +\frac12 \|\nablah v\|_{L^2}^2.
\end{align*}
Gronwall's Lemma now implies $v=0$.

Existence: Let $(\Phi_n)_n \subset V$ be 
orthonormal in $H$ with $\partial_{zz}\Phi_n\in L^2(\Omega)$ 
and $\operatorname{span} \{\Phi_n | n\in\N\}$ dense 
in $H$. We set $V_n:=\operatorname{span}\{\Phi_j | 1\leq j \leq n\}$ 
and denote by $P_n: H\to V_n$ the orthogonal projection onto it.

We project the equation onto the finite 
dimensional subspace $V_n$ and we are 
looking for a solution $v_n:[0,T]\to V_n$ 
of the system of ordinary differential equations 
\begin{align}\label{eq:galerkin}
\begin{split}
\langle \partial_t v_n , \Phi_i \rangle_H+\langle  b\cdot\nablah  v_n, \Phi_i \rangle_H+\langle a v_n, \Phi_i \rangle_H\qquad\\
 +\langle w \partial_z v_n, \Phi_i \rangle_H - \langle \Deltah v_n, \Phi_i \rangle_H & = \langle f, \Phi_i \rangle_H, \quad (1\leq i \leq n)  \\
v_n(0) & =P_n v_0.
\end{split}
\end{align}
We have $v_n(t)=\sum_{j=1}^n g_{nj}(t)\Phi_j$ for some $g_{nj}:[0,T]\to \R$ and this yields
 \begin{align*}
\sum_{j=1}^n  \ddt g_{nj}(t)  \langle \Phi_j , \Phi_i \rangle_H+ g_{nj}(t)  \langle  b\cdot\nablah \Phi_j, \Phi_i \rangle_H+ g_{nj}(t)  \langle a\Phi_j, \Phi_i \rangle_H\\
 + g_{nj}(t)  \langle w \partial_z\Phi_j, \Phi_i \rangle_H - g_{nj}(t)  \langle \Deltah \Phi_j, \Phi_i \rangle_H & = \langle f, \Phi_i \rangle_H,  \\
\sum_{j=1}^n g_{nj}(0)\Phi_j & =P_n v_0
\end{align*}
for $1\leq i\leq n$. Denoting 
 \begin{align*}
g_n(t)& :=(g_{1n}(t),...,g_{nn}(t)), \qquad\qquad\quad
f_n(t):=(\langle f(t), \Phi_1 \rangle_H,...,\langle f(t), \Phi_n \rangle_H),\\
a_n(t)&:=(\langle a(t) \Phi_j, \Phi_i \rangle_H)_{1\leq i,j\leq n}, \qquad\quad
b_n(t):=(\langle (b(t)\cdot \nablah) \Phi_j, \Phi_i \rangle_H)_{1\leq i,j\leq n}, \\
w_n(t)&:=(\langle w(t) \partial_z \Phi_j, \Phi_i \rangle_H)_{1\leq i,j\leq n}, \qquad
D_n:=(\langle \nablah  \Phi_j, \nablah  \Phi_i \rangle_H)_{1\leq i,j\leq n}
\end{align*}
this system has the form
\begin{align*}
 \ddt g_n(t)+[a_n(t)+b_n(t) + w_n(t)+ D_n]g_n(t)  =f_n(t),  \\
\sum_{j=1}^n g_{nj}(0)\Phi_j=P_n v_0.
\end{align*}
By standard theory for ordinary differential 
equations there is a solution $g_n\in H^1((0,T),\R)$. 
Multiplying (\ref{eq:galerkin}) by $g_{ni}$ and 
summing over $i$ yields
\begin{multline*}
\langle \partial_t v_n , v_n \rangle_H +\langle a v_n, v_n \rangle_H +\langle  b\cdot\nablah  v_n, v_n \rangle_H
 +\langle w \partial_z v_n, v_n \rangle_H - \langle \Deltah v_n, v_n \rangle_H  \\ = \langle f, v_n \rangle_H.
\end{multline*}
From the periodic boundary conditions we obtain 
\begin{align*}
-\langle \Deltah v_n, v_n \rangle_H=\|\partial_x v_n\|_H^2+\|\partial_y v_n\|_H^2,
\end{align*}
and $w(z=\pm h)=0$ gives 
\begin{align*}
|\langle w \partial_z v_n, v_n \rangle_H | %& =|\langle w \partial_z v_n, v_n \rangle_{\Omega} +\langle w_z \cdot \partial_z v_n, \partial_z  v_n \rangle_{\Omega} +\langle w \partial_{zz} v_n,\partial_z  v_n \rangle_{\Omega} |\\
& =\left|-\frac12 \langle w_z \cdot v_n, v_n \rangle_{\Omega} +\frac12 \langle w_z \cdot \partial_z v_n, \partial_z  v_n \rangle_{\Omega} \right|
% & \leq \frac12 \|w_z\|_{\infty}  \|v_n\|_{L^2}^2+ \frac12\|w_z\|_{\infty}  \|\partial_z v_n\|_{L^2}^2\\
 \leq  \frac12 \|w_z\|_{\infty}  \|v_n\|_H^2.
\end{align*}
With
\begin{align*}
|\langle  b\cdot\nablah  v_n, v_n \rangle_H| & =|\langle  b\cdot\nablah  v_n, v_n \rangle_{\Omega} +\langle  b\cdot\nablah  \partial_z v_n+ (b_z \cdot \nablah) v_n,\partial_z  v_n \rangle_{\Omega} |\\
& \leq \|b\|_{\infty}\|\nablah v_n\|_{L^2}\|v_n\|_{L^2}+ \|b\|_{\infty} \cdot \|\nablah \partial_z v_n\|_{L^2} \|\partial_z v_n\|_{L^2}\\
&\quad+\|b_z\|_{\infty}\|\nablah v_n\|_{L^2} \|\partial_z v_n\|_{L^2}\\
%& \leq\|b\|_{\infty}^2\|v_n\|_H^2+\frac14\|\nablah v_n\|_H^2+\|b_z\|_{\infty}^2\|\partial_z v_n\|_{L^2}^2+\frac14\|\nablah v_n\|_{L^2}^2\\
& \leq (\|b\|_{\infty}^2+\|b_z\|_{\infty}^2 )\|v_n\|_H^2 +\frac12 \|\nablah v_n\|_H^2
\end{align*}
and
\begin{align*}
|\langle a v_n, v_n \rangle_H| %& =|\langle a v_n, v_n \rangle_{\Omega} +\langle a \partial_z v_n, \partial_z  v_n \rangle_{\Omega} +\langle a_z v_n ,\partial_z  v_n \rangle_{\Omega} |\\
& \leq \|a\|_{\infty} \cdot \|v_n\|_{L^2}^2+ \|a\|_{\infty} \cdot \|\partial_z v_n\|_{L^2}^2+\|a_z\|_{\infty} \cdot \|v_n\|_{L^2}\|\partial_z v_n\|_{L^2}\\
%& \leq \|a\|_{\infty} \cdot \|v_n\|_{L^2}^2+ \|a\|_{\infty} \cdot \|\partial_z v_n\|_{L^2}^2+\frac12 \|a_z\|_{\infty} \cdot (\|v_n\|_{L^2}^2+\|\partial_z v_n\|_{L^2}^2)\\
& = \|a\|_{\infty}\cdot \|v_n\|_H^2 +\frac12 \|a_z\|_{\infty} \| v_n\|_H^2
\end{align*}
we get
 \begin{align*}
& \ddt \|v_n\|_H^2 +2\|\nablah v_n\|_H^2 \\
% & \qquad\qquad \leq (2\|a\|_{\infty}+\|a_z\|_{\infty}) \cdot \|v_n\|_H^2+2(\|b\|_{\infty}^2+\|b_z\|_{\infty}^2 ) \cdot \|v_n\|_H^2 + \|\nablah v_n\|_H^2\\
% & \qquad\qquad\quad +\|w_z\|_{\infty} \cdot \|v_n\|_H^2 + \frac12 \|v_n\|_V^2+ 2\|f\|_{V'}^2\\
& \qquad\qquad =(2\|a\|_{\infty}+\|a_z\|_{\infty}) \cdot \|v_n\|_H^2+2(\|b\|_{\infty}^2+\|b_z\|_{\infty}^2 ) \cdot \|v_n\|_H^2\\
& \qquad\qquad\quad  + \|w_z\|_{\infty} \cdot \|v_n\|_H^2+ \|\nablah v_n\|_H^2 + \frac12 \|v_n\|_H^2+\frac12 \|\nablah v_n\|_H^2  + 2\|f\|_{V'}^2,
\end{align*}
and thus
 \begin{multline*}
\ddt \|v_n\|_H^2 +\frac12 \|\nablah v_n\|_H^2 \\
\leq \left(\frac12+\|w_z\|_{\infty}+2\|(a,a_z)\|_{\infty}+2\|(b,b_z)\|_{\infty}^2 \right) \cdot \|v_n\|_H^2+ 2\|f\|_{V'}^2.
\end{multline*}
Integration in time
 \begin{multline*}
\|v_n(t)\|_H^2 +\int_0^t\|\nablah v_n(r)\|_H^2 \dr   \leq \|v_0\|_{H}^2+\int_0^t \|f(r)\|_{V'}^2 \dr \\
+ \left(\frac12+\|w_z\|_{\infty}+2\|(a,a_z)\|_{\infty}+2\|(b,b_z)\|_{\infty}^2\right) \cdot \int_0^t \|v_n(r)\|_H^2 \dr
\end{multline*}
and Gronwall's inequality give
 \begin{multline*}
  \|v_n(t)\|_H^2+\int_0^t\|\nablah v_n(r)\|_H^2 \dr \\
  \leq \left(\|v_0\|_{H}^2+2\int_0^t\|f(r)\|_{V'}^2\dr \right) e^{\left(\frac12+\|w_z\|_{\infty}+2\|(a,a_z)\|_{\infty}+2\|(b,b_z)\|_{\infty}^2\right) T}.
\end{multline*}
This estimate yields the boundedness of the 
sequence $(v_n)_n$ in $C^0([0,T],H)\cap L^2((0,T),V)$. 
Analogously the strong convergence in 
$C^0([0,T],H)\cap L^2((0,T),V)$ follows by 
performing the above estimates for $v_m-v_n$.

Let now $\varphi\in C_c^{\infty}([0,T],V)$ be 
of the form $\varphi(t)=\sum_{i=1}^k h_i(t)\Phi_i$ 
with $k\in \N$ and $h_i\in C_c^{\infty}([0,T],\R)$ 
($1\leq i \leq k$). From (\ref{eq:galerkin}) we get with
\begin{align*}
\langle w \partial_z v_n, \varphi \rangle_H 
% = \langle w \partial_z v_n, \varphi \rangle_{\Omega}+\langle w_z \cdot \partial_z v_n, \varphi_z \rangle_{\Omega}+\langle w \partial_{zz} v_n, \varphi_z \rangle_{\Omega} \\
 = \langle w \partial_z v_n, \varphi \rangle_{\Omega}-\langle w \partial_z  v_n,\partial_{zz} \varphi \rangle_{\Omega} 
 = \langle w \partial_z v_n, (1-\partial_{zz}) \varphi \rangle_{\Omega}
\end{align*}
that
\begin{multline*}
\int_0^T -\langle  v_n , \partial_t \varphi \rangle_H +\langle a v_n, \varphi \rangle_H +\langle  b\cdot\nablah  v_n, \varphi \rangle_H\\
+\langle w \partial_z v_n, (1-\partial_{zz}) \varphi \rangle_{\Omega}  + \langle \nablah  v_n, \nablah  \varphi \rangle_H \dt = \int_0^T  \langle f, \varphi \rangle_H \dt
\end{multline*}
and passing to the limit gives
\begin{multline*}
\int_0^T  -\langle  v , \partial_t \varphi \rangle_H +\langle a v, \varphi \rangle_H +\langle  b\cdot\nablah  v, \varphi \rangle_H\\
+\langle w \partial_z v, (1-\partial_{zz}) \varphi \rangle_{\Omega} + \langle \nablah v, \nablah  \varphi \rangle_H \dt =\int_0^T  \langle f, \varphi \rangle_H\dt.
\end{multline*}
To show that $v$ is a weak solution we 
need to have this equality with scalar 
products in $L^2(\Omega)$ instead of $H$, 
and without the term $(1-\partial_{zz})\varphi$. 
By density arguments the above equality holds for 
all $\varphi\in C_c^{\infty}([0,T],V)$ with 
$\partial_{zz}\varphi\in C_c^{\infty}([0,T],L^2(\Omega))$, 
and taking such a $\varphi$ with 
$\partial_{z}\varphi\in C_c^{\infty}([0,T],V) $ 
and $\partial_{z}\varphi(z=\pm h)=0$ we obtain
\begin{multline*}
-  \langle v , \partial_t (1-\partial_{zz})\varphi \rangle_{T} +\langle a v, (1-\partial_{zz})\varphi \rangle_{T} +\langle  b\cdot\nablah  v, (1-\partial_{zz})\varphi \rangle_{T} \\
+\langle w \partial_z v, (1-\partial_{zz}) \varphi \rangle_{T} + \langle \nablah  v, \nablah  (1-\partial_{zz})\varphi \rangle_{T}  = \langle f, (1-\partial_{zz})\varphi \rangle_{T}.
\end{multline*}
The set
\begin{align*}
\{(1-\partial_{zz})\varphi|\varphi,\partial_{z}\varphi\in C_c^{\infty}([0,T],V),\partial_{z}\varphi(z=\pm h)=0\}
\end{align*}
is dense in $C_c^{\infty}((0,T),L^2((-h,h),H^1_{\per}(G)))$, and so we have
\begin{multline*}
  \langle  v , \partial_t \varphi \rangle_{T}+\langle a v, \varphi \rangle_{T} +\langle  b\cdot\nablah  v, \varphi \rangle_{T}
+\langle w \partial_z v, \varphi \rangle_{T} + \langle \nablah  v, \nablah  \varphi \rangle_{T}  =  \langle f, \varphi \rangle_{T} 
\end{multline*}
for all $\varphi \in C_c^{\infty}((0,T),L^2((-h,h),H^1_{\per}(G)))$.\\
Thus, $v\in H^1((0,T),L^2((-h,h),H^{-1}(G))) \cap C^0([0,T],H)\cap L^2((0,T),V)$ is a weak solution.
\end{proof}

A direct consequence is the following result on $C^{\infty}$-data.
\begin{corollary}\label{cor:linreg}
Let $a,b,w,f \in C^{\infty}([0,T],C^{\infty}_{\per}(\Omega))$ with $w(z=\pm h)=0$ and $v_0 \in C^{\infty}_{\per}(\Omega)$. Then we have for the solution $v$ to (\ref{eq:primequlin}) obtained in Theorem \ref{th:linex} that
\begin{align*}
v  \in C^{\infty}([0,T],C^{\infty}_{\per}(\Omega)).
\end{align*}
\end{corollary}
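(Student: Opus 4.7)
The plan is to bootstrap regularity from the weak solution given by Theorem \ref{th:linex} by applying that theorem repeatedly to the equations satisfied by spatial and temporal derivatives of $v$; uniqueness in Theorem \ref{th:linex} then identifies each such derivative with the correct weak object. The bootstrap proceeds in three stages: horizontal spatial derivatives, then vertical ones, then time derivatives.

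For horizontal derivatives, translation invariance in $(x,y)$ afforded by the periodic boundary conditions means that formally applying $\partial_x^j \partial_y^k$ to (\ref{eq:primequlin}) produces, via the Leibniz rule, an equation of exactly the same structural form for $\partial_x^j\partial_y^k v$, whose right-hand side collects $\partial_x^j\partial_y^k f$ together with commutator terms built from strictly lower horizontal derivatives of $v$ multiplied by smooth derivatives of $a,b$. A standard difference-quotient argument in the horizontal variables (or, equivalently, rerunning the Galerkin scheme of Theorem \ref{th:linex} with a basis in $C^\infty_{\per}(\Omega)$ and closing energy estimates uniformly at each horizontal order) combined with Theorem \ref{th:linex} gives $\partial_x^j \partial_y^k v \in C^0([0,T],H)\cap L^2((0,T),V)$ by induction on $j+k$. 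For the vertical direction, setting $u:=\partial_z v$ and differentiating (\ref{eq:primequlin}) in $z$ yields
\begin{align*}
\partial_t u + a u + b\cdot\nablah u + w\,\partial_z u - \Deltah u = \partial_z f - a_z v - b_z\cdot\nablah v - w_z u,
\end{align*}
again an equation of the form (\ref{eq:primequlin}), with smooth coefficients, smooth initial datum $\partial_z v_0$, the same $w$ (so still $w(z=\pm h)=0$), and right-hand side in $L^2((0,T),V')$ by the horizontal regularity already obtained. Theorem \ref{th:linex} applies and gives $\partial_z v \in C^0([0,T],H)\cap L^2((0,T),V)$; iterating in $z$ and combining with the horizontal bootstrap yields $v\in L^\infty((0,T),H^k_{\per}(\Omega))$ for every $k\in\N$.

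Time regularity is then read off the PDE: $\partial_t v = f - a v - b\cdot\nablah v - w\partial_z v + \Deltah v$, whose right-hand side now has full spatial regularity; differentiating in $t$ produces yet another equation of the form (\ref{eq:primequlin}) for $\partial_t v$ with smooth data and initial condition obtained by evaluating the PDE at $t=0$, and another application of Theorem \ref{th:linex} controls $\partial_t v$ at the same level. Iterating in $t$ gives $\partial_t^m v\in C^0([0,T], H^k_{\per}(\Omega))$ for all $m,k$, which is the claim. The only delicate point—and the one worth emphasizing as the main obstacle—is the rigorous justification of the differentiation used in the vertical step: since $v$ has no prescribed boundary condition at $z=\pm h$, one cannot freely integrate by parts in $z$, and the argument hinges precisely on the hypothesis $w|_{z=\pm h}=0$, which is what allows Theorem \ref{th:linex} to be reapplied at every order without ever requiring a vertical boundary condition for $v$ or its $z$-derivatives.
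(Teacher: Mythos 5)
The paper offers no proof of this corollary (it is asserted as a ``direct consequence'' of Theorem \ref{th:linex}), and your bootstrap-plus-uniqueness strategy is indeed the natural way to substantiate it. However, there is a concrete gap in the order in which you run the bootstrap. When you apply $\partial_x$ to (\ref{eq:primequlin}), the commutator from the transport term produces the source $-w_x\,\partial_z v$ on the right-hand side (a term missing from your list of commutators, which only mentions derivatives of $a,b$). To reapply Theorem \ref{th:linex} you must check that this source lies in $L^2((0,T),V')$ with $V'=H^1((-h,h),H^{-1}(G))$, i.e.\ that its $z$-derivative $w_{xz}\partial_z v + w_x\partial_{zz}v$ is square integrable in $z$ with values in $H^{-1}(G)$. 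At the first step of your horizontal induction only $v\in L^2((0,T),V)$ is known, so $\partial_{zz}v$ is not controlled and the hypothesis of Theorem \ref{th:linex} fails. The same issue appears in your vertical step as written: you place $-w_z\partial_z v=-w_z u$ on the right-hand side as a source, and its $z$-derivative again contains $w_z\partial_{zz}v$; note that ``the horizontal regularity already obtained'' does nothing for $\partial_{zz}v$. So the delicate point is not, as you suggest, integration by parts in $z$ at the boundary, but the $V'$-membership of the commutator sources involving $\partial_z v$.

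The repair is to reorder the bootstrap. Differentiate in $z$ \emph{first}: for $u=\partial_z v$ the troublesome commutator $w_z\partial_z v=w_z u$ involves the new unknown and can be absorbed into the zeroth-order coefficient, giving
\begin{align*}
\partial_t u + (a+w_z)u + b\cdot\nablah u + w\,\partial_z u - \Deltah u = \partial_z f - a_z v - b_z\cdot\nablah v,
\end{align*}
whose right-hand side lies in $L^2((0,T),H)\subset L^2((0,T),V')$ for $v\in L^2((0,T),V)$, and whose coefficients $(a+w_z, b, w)$ still satisfy the hypotheses of Theorem \ref{th:linex}. Iterating in $z$ yields $\partial_z^m v\in C^0([0,T],H)\cap L^2((0,T),V)$ for all $m$; only then are the sources $w_x\partial_z v$, $w_y\partial_z v$ in $V'$ and the horizontal induction goes through. (Equivalently, one can run the Galerkin energy estimates of Theorem \ref{th:linex} simultaneously for all mixed derivatives $\nablatoalpha v_n$ of a given total order, using $w(z=\pm h)=0$ on the top-order transport term and controlling every commutator within the same norm.) The identification of each derivative with the weak solution produced by Theorem \ref{th:linex}, and the subsequent reading-off of time regularity from the equation, are then fine as you describe.
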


\section{Primitive equations with horizontal viscosity}
In this section we show that the equation 
(\ref{eq:primequhorvisc}) is well-posed. 
First we prove the local in time well-posedness 
part of Theorem \ref{th:introd1}.
\begin{theorem}\label{th:horviscloc}
Let $s\geq 2$. Then for any  $v_0\in H^s_{\per}(\Omega)$
 with $\divh \overline v_0=0$  there exists 
 a time $T>0$ and a unique strong solution 
 $v$ to (\ref{eq:primequhorvisc}) with
\begin{align*}
 v & \in L^{\infty}((0,T), H^{s}_{\per}(\Omega)) \cap C^0([0,T], H^{s-\kappa}_{\per}(\Omega))\\
\partial_x v,\partial_y v & \in L^2((0,T),H^{s}_{\per}(\Omega))
\end{align*}
for all $\kappa\in(0,1)$.
\end{theorem}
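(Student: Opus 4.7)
The plan is to prove local existence by an iteration scheme built on the linearized equation of Theorem \ref{th:linex}, combined with the baroclinic/barotropic decomposition (\ref{eq:primequhorviscbar})--(\ref{eq:primequcoupling}). I first restrict to smooth data $v_0\in C_{\per}^{\infty}(\Omega)$ with $\divh \overline v_0=0$, derive a priori bounds in $H^s_{\per}(\Omega)$ uniform on a short time interval $[0,T]$ depending only on $\|v_0\|_{H^s}$, and at the end recover general $H^s_{\per}$ data by mollification.

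Starting from $v^{(0)}\equiv v_0$, given an iterate $v^{(n)}\in C^{\infty}([0,T],C_{\per}^{\infty}(\Omega))$ with $\divh\overline v^{(n)}=0$, I set $w^{(n)}=-\divh\int_{-h}^z\tilde v^{(n)}\,\dxi$ and define $\overline v^{(n+1)}$ as the solution of the 2D horizontally-viscous Navier--Stokes-type system with transport $\overline v^{(n)}\cdot\nablah$ and source $-K(\tilde v^{(n)})$ (the pressure being reconstructed by the standard 2D Leray procedure), and $\tilde v^{(n+1)}$ as the solution of a linearized equation of the form (\ref{eq:primequlin}) with $b=v^{(n)}$, $w=w^{(n)}$, a zero-order coefficient built from $\nablah\overline v^{(n)}$, and source $K(\tilde v^{(n)})$. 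Theorem \ref{th:linex} together with Corollary \ref{cor:linreg} gives smooth solvability at every step, and one checks directly that $\divh\overline v^{(n+1)}=0$ is preserved.

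The core of the argument is the uniform $H^s$ estimate. Applying $\nablatoalpha$ with $|\alpha|\le s$ to the equation for $v^{(n+1)}$ and pairing with $\nablatoalpha v^{(n+1)}$ in $L^2(\Omega)$, horizontal periodicity combined with $w^{(n)}(z=\pm h)=0$ renders the leading transport contributions antisymmetric, producing
\[
\tfrac{1}{2}\ddt\|v^{(n+1)}\|_{H^s}^2+\|\nablah v^{(n+1)}\|_{H^s}^2 \le \mathcal R_1+\mathcal R_2,
\]
where $\mathcal R_1$ gathers Moser-type commutators from $v^{(n)}\cdot\nablah$, handled by $H^s\hookrightarrow L^{\infty}$ since $s\ge 2$, and $\mathcal R_2$ collects commutators of the form $[\nablatoalpha,w^{(n)}\partial_z]v^{(n+1)}$. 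The pressure gradient contributes only to the equation for $\overline v^{(n+1)}$ and is controlled by the standard 2D argument. The hard part is $\mathcal R_2$: when all $s$ derivatives fall on $w^{(n)}$ one has only $\nablah^{s-1}\divh\int_{-h}^z\tilde v^{(n)}\in L^2$, so $\partial_h^{\alpha}w^{(n)}$ cannot be put in $L^{\infty}$. Lemma \ref{lemma:lowreg}(a) is tailor-made for this: placing $\partial_h^{\alpha}w^{(n)}$ in the $L^2$ slot, a factor $\partial_zv^{(n+1)}$ in the $H^1$ slot, and $\nablatoalpha v^{(n+1)}$ in the remaining slot produces a term absorbable by the dissipation on the left via Young's inequality. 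This closes a Gronwall-type inequality and, by induction on $n$, gives a uniform bound in $n$ on some interval $[0,T]$ whose length depends only on $\|v_0\|_{H^s}$.

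Once the uniform bound is in place, the equation itself yields $\partial_t v^{(n)}\in L^2((0,T),H^{s-2}_{\per})$; Lemma \ref{le:aubinlions} then supplies a subsequence converging strongly in $C^0([0,T],H^{s-\kappa}_{\per})$ for every $\kappa\in(0,1)$, which is amply sufficient to pass to the limit in the quadratic nonlinearities (including $w^{(n)}\partial_zv^{(n)}$, since $w^{(n)}$ is linear in $\tilde v^{(n)}$). Uniqueness follows from an $L^2$ energy estimate on the difference of two solutions, invoking Lemma \ref{lemma:lowreg}(a) once more to control the trilinear contribution of $w$. Finally, for $v_0\in H^s_{\per}(\Omega)$ with $\divh\overline v_0=0$, one mollifies to obtain $v_0^{(m)}\in C_{\per}^{\infty}$ preserving the constraint, produces solutions on a common interval (since the a priori bound depends only on $\|v_0^{(m)}\|_{H^s}$), and passes to the limit by the same compactness argument.
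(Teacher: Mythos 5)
Your overall strategy coincides with the paper's: approximate the data by smooth functions, iterate on the coupled system (\ref{eq:primequhorviscbar})--(\ref{eq:primequcoupling}) using Theorem \ref{th:linex} and Corollary \ref{cor:linreg} at each step, close an $H^s$ energy estimate in which the critical commutator term $[\nablatoalpha, w\partial_z]$ is handled by Lemma \ref{lemma:lowreg}(a) and absorbed into the horizontal dissipation, then conclude by Lemma \ref{le:aubinlions}. However, there is one concrete gap in your construction of the iterates. You set $w^{(n)}=-\divh\int_{-h}^{z}\tilde v^{(n)}\,\dxi$ and then invoke both Theorem \ref{th:linex} (which requires $w(z=\pm h)=0$) and the antisymmetry of the transport term $w^{(n)}\partial_z$ in the energy estimate (which again relies on $w^{(n)}$ vanishing on the top and bottom). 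But $w^{(n)}(z=h)=0$ is equivalent to $\divh\int_{-h}^{h}\tilde v^{(n)}\,\dxi=0$, and the iterate $\tilde v^{(n)}$ produced by the linearized equation is \emph{not} average free in $z$: averaging its equation in $z$ yields an inhomogeneous equation for $\frac{1}{2h}\int_{-h}^{h}\tilde v^{(n)}\dxi$ whose right-hand side involves both $\tilde v^{(n-1)}$ and $\tilde v^{(n)}$ and does not cancel except at a fixed point. The property you do verify, $\divh\overline v^{(n+1)}=0$, is automatic from the 2D Leray projection and is not the relevant constraint. Without $w^{(n)}(z=\pm h)=0$ the integration by parts in $z$ produces uncontrolled boundary terms and the linear existence theorem does not apply as stated.

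The fix is exactly what the paper does: define
\begin{align*}
w^{(n)}(t,x,y,z)=-\int_{-h}^{z}\divh\tilde v^{(n)}(t,x,y,\xi)\,\dxi+\frac{z+h}{2h}\int_{-h}^{h}\divh\tilde v^{(n)}(t,x,y,\xi)\,\dxi,
\end{align*}
so that $w^{(n)}(z=\pm h)=0$ by construction, at the price that $\langle\nablatoalpha\nablah p,\nablatoalpha v\rangle$ no longer vanishes for purely horizontal $\alpha$ and must be estimated via the elliptic equation for the pressure. One then shows \emph{a posteriori} that the limit $\tilde v$ is average free (its vertical mean solves a homogeneous linear parabolic equation with zero data), so the correction term disappears in the limit and the limit triple solves the original system. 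With this modification your argument goes through and is essentially the paper's proof.
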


\begin{proof} 
Let $(v_{0,n})_n\subset C_{\per}^{\infty}(\Omega)$ 
with $v_{0,n}\to v_0$ in $H^s(\Omega)$ and 
$\norm{v_n}_{H^s}\leq \norm{v}_{H^s}$.

For $n\in\N$ and $\overline v_{n-1}, \tilde v_{n-1}$, 
$w_{n-1} \in C^{\infty}([0,T],C_{\per}^{\infty}(\Omega))$ 
given let $\overline v_{n}=\overline v_{n}(t,x,y)$ and 
$p_n=p_n(t,x,y)$ be the solution to
\begin{align*}
\partial_t\overline v_{n}+\overline v_{n-1}\cdot\nablah\overline v_{n}- \Deltah \overline v_{n}+\nablah  p_n & =-K(\tilde v_{n-1}), \\
\divh \overline  v_{n} & = 0,\\
\overline v_{n}(t=0)& =\frac{1}{2h}\int_{-h}^{h} v_{0,n}(0,x,y,\xi)\dxi,
\end{align*}
where $K$ is defined as in (\ref{eq:primequcoupling}), 
and $\tilde v_{n}=\tilde v_{n}(t,x,y,z)$ be the solution to
\begin{align*}
&\partial_t \tilde v_{n} + (\tilde v_{n-1} + \overline v_{n-1}) \cdot \nablah  \tilde v_{n}+ \tilde v_{n-1} \cdot \nablah  \overline v_{n} + w_{n-1} \partial_z \tilde v_{n} - \Deltah \tilde v_{n}= K(\tilde v_{n-1}),\\
&\tilde v_{n}(t=0)  =v_{0,n}-\overline v_{0,n},
\end{align*}
both equations are complemented by 
periodic boundary conditions in the 
horizontal directions. We define 
\begin{align*}
w_{n}(t,x,y,z) & =-\int_{-h}^{z}\divh \tilde v_{n} (t,x,y,\xi) \dxi+\frac{z+h}{2h}\int_{-h}^{h}\divh \tilde v_{n} (t,x,y,\xi) \dxi.
\end{align*}
Starting with $\overline v_{0}=\tilde v_{0}=w_{0}=0$ 
the sequence is well defined by Corollary \ref{cor:linreg} 
and known results for the $2D$ Stokes equation.

Note, that this set of equations looks similar 
to (\ref{eq:primequhorviscbar})-(\ref{eq:primequcoupling}), 
but $\tilde v_{n}$ is not average free in the vertical 
direction and therefore we need the correction term in 
the equation for $w_n$ to guarantee that $w_{n}(z=\pm h)=0$. 
However, after passing to the limit the resulting function 
$\tilde v$ will be average free and thus the correction term vanishes.

For $\overline v_n$ we obtain the inequality
\begin{multline*}
\frac12\ddt\norm{\overline v_{n}}^2_{H^s}+\norm{\nablah\overline v_{n}}^2_{H^s} \\ 
\leq c\norm{\overline v_{n-1}}_{H^s}\norm{\overline v_{n}}_{H^s}\norm{\nablah \overline v_{n}}_{H^s}  +c\norm{\tilde v_{n-1}}_{H^s}\norm{\nablah \tilde v_{n-1}}_{H^s}\norm{\overline v_{n}}_{H^s} , 
\end{multline*}
where the last term is due to the 
coupling $K(\tilde v)$. By applying 
$\divh$ to the equation for $\overline v_n$ 
we get that 
\begin{align*}
-\Deltah  p_n  & =\divh( K(\tilde v_{n-1})+\overline v_{n-1} \cdot \nablah  \overline v_n),
\end{align*}
and so we have for the pressure
\begin{align}\label{eq:estimatepressure}
\norm{\nablah  p_n}_{H^s} & \leq c\norm{\tilde v_{n-1}}_{H^s} \norm{\nablah \tilde v_{n-1}}_{H^s}+c\norm{\overline v_{n-1}}_{H^s} \norm{\nablah \overline v_{n}}_{H^s}.
\end{align}
Next we show an estimate for $v_{n}:=\overline v_{n}+\tilde v_{n}$. It fulfills the equation
\begin{align*}
\partial_t  v_{n} +  v_{n-1} \cdot \nablah   v_{n}+ w_{n-1} \partial_z v_{n} - \Deltah  v_{n} + \nablah  p_n  & =0 , \\
\partial_z p_n  & =0 , \\
\divh \overline  v_{n} & = 0,\\
v_{n}(t=0)& = v_{0,n}.
\end{align*}
Applying $\nablatoalpha$ and multiplying with $\nablatoalpha  v_n$ we obtain
\begin{multline*}
\frac12 \partial_t\norm{\nablatoalpha v_{n}}_{L^2}^2+\norm{\nablah\nablatoalpha v_{n}}_{L^2}^2\\
=-\langle \nablatoalpha( v_{n-1} \cdot \nablah v_{n}+w_{n-1} \partial_z   v_{n}+\nablah p_n),\nablatoalpha v_{n}\rangle_{\Omega}.
\end{multline*}
For the pressure term we only have 
$\langle \nablatoalpha\nablah p_n,\nablatoalpha v_{n}\rangle_{\Omega}=0$ 
if $\nablatoalpha$ contains a derivative in the $z$ 
direction, because of the correction term in the 
definition of $w_{n-1}$, but with (\ref{eq:estimatepressure}) 
and $\tilde v_{n}=v_{n}-\overline v_{n}$ it follows 
for $\nablatoalpha=(\partial_x,\partial_y)^{\alpha}$ that
\begin{align*}
& |\langle \nablatoalpha\nablah p_n,\nablatoalpha v_{n}\rangle_{\Omega}|\\
& \leq c(\norm{v_{n-1}}_{H^s} \norm{\nablah v_{n-1}}_{H^s}+\norm{ v_{n-1}}_{H^s} \norm{\nablah \overline v_{n-1}}_{H^s}+\norm{\overline v_{n-1}}_{H^s} \norm{\nablah v_{n-1}}_{H^s}\\
&\qquad +\norm{\overline v_{n-1}}_{H^s} \norm{\nablah \overline v_{n-1}}_{H^s}
+\norm{\overline v_{n-1}}_{H^s} \norm{\nablah \overline v_{n}}_{H^s} )\norm{\nablatoalpha v_{n}}_{L^2}.
\end{align*}
The first part of the nonlinearity can be estimated directly
\begin{align*}
|\langle  \nablatoalpha( v_{n-1} \cdot \nablah v_{n}),\nablatoalpha v_{n} \rangle_{\Omega}|\leq  c\norm{v_{n-1}}_{H^s} \norm{\nablah v_{n}}_{H^s}\norm{\nablatoalpha v_{n}}_{L^2}.
\end{align*}
The other one we write as
\begin{align*}
\langle  \nablatoalpha( w_{n-1} \partial_z v_{n}),\nablatoalpha v_{n} \rangle_{\Omega}= 
&\langle  (\nablatoalpha w_{n-1}) \partial_z  v_{n},\nablatoalpha v_{n} \rangle_{\Omega}+\langle w_{n-1} \nablatoalpha \partial_z v_{n},\nablatoalpha v_{n} \rangle_{\Omega}\\
&+\sum_{0<\alpha'<\alpha}\langle  \nablatoalphastr w_{n-1} \nablatoalphaminstr\partial_z v_{n}),\nablatoalpha v_{n} \rangle_{\Omega}.
\end{align*}
By $w_{n-1}(z=\pm h)=0$ and because of $\norm{\partial_z w_{n-1}}_{L^{\infty}}\leq \norm{\divh v_n}_{H^2}$ we obtain
\begin{align*}
|\langle w_{n-1} \nablatoalpha \partial_z v_{n},\nablatoalpha v_{n} \rangle_{\Omega}| & %\leq c \norm{\partial_z w_{n-1}}_{L^{\infty}} \norm{\nablatoalpha v_{n}}_{L^2}^2
 \leq c \norm{\divh v_{n-1}}_{H^2} \norm{\nablatoalpha v_{n}}_{L^2}^2.
\end{align*}
Let us recall that by Lemma \ref{lemma:lowreg}
\begin{align*}
|\langle fg,h\rangle_{\Omega}|\leq c \norm{f}_{L^2}  \norm{g}_{H^1} \norm{\nablah h}_{L^2}^{1/2}\norm{h}_{L^2}^{1/2}+c \norm{f}_{L^2}  \norm{g}_{H^1} \norm{h}_{L^2}
\end{align*}
holds. This implies 
\begin{align*}
|\langle(\nablatoalpha w_{n-1}) \partial_z  v_{n},\nablatoalpha v_{n}  \rangle_{\Omega}| \leq 
& c \norm{\nablatoalpha w_{n-1}}_{L^2}  \norm{ \partial_z  v_{n}}_{H^1} \norm{\nablah \nablatoalpha v_{n}}_{L^2}^{1/2}\norm{\nablatoalpha v_{n}}_{L^2}^{1/2}\\
& + c\norm{\nablatoalpha w_{n-1}}_{L^2}  \norm{ \partial_z  v_{n}}_{H^1} \norm{\nablatoalpha v_{n}}_{L^2}\\
\leq & c \norm{\divh v_{n-1}}_{H^s} \norm{v_{n}}_{H^2} \norm{\nablah v_{n}}_{H^s}^{1/2}\norm{\nablatoalpha v_{n}}_{L^2}^{1/2}\\
&+ c \norm{\divh v_{n-1}}_{H^s} \norm{v_{n}}_{H^2}\norm{\nablatoalpha v_{n}}_{L^2}.
\end{align*}
Here we have two terms which contain 
third order derivatives, but they come 
with a power strictly less then two, so 
they also can be absorbed by the horizontal 
Laplacian in the end. For the sum we proceed 
similarly and get
\begin{align*}
& \sum_{0<\alpha'<\alpha}|\langle \nablatoalphastr w_{n-1} \nablatoalphaminstr\partial_z v_{n}),\nablatoalpha v_{n} \rangle_{\Omega}| \\
& \qquad\qquad \leq c \sum_{0<\alpha'<\alpha} \norm{\nablatoalphastr w_{n-1}}_{H^1}  \norm{\nablatoalphaminstr \partial_z  v_{n}}_{L^2} \norm{\nablah \nablatoalpha v_{n}}_{L^2}^{1/2}\norm{\nablatoalpha v_{n}}_{L^2}^{1/2}\\
&\qquad\qquad\qquad\qquad\quad +\norm{\nablatoalphastr w_{n-1}}_{H^1}  \norm{\nablatoalphaminstr \partial_z  v_{n}}_{L^2} \norm{\nablatoalpha v_{n}}_{L^2}\\
& \qquad\qquad \leq c \norm{\nablah v_{n-1}}_{H^s} \norm{v_{n}}_{H^s} \norm{\nablah v_{n}}_{H^s}^{1/2}\norm{\nablatoalpha v_{n}}_{L^2}^{1/2}\\
& \qquad\qquad\quad + c \norm{\divh v_{n-1}}_{H^s} \norm{v_{n}}_{H^2}\norm{\nablatoalpha v_{n}}_{L^2}.
\end{align*}
With Young's inequality it follows that
\begin{align*}
&\partial_t\norm{v_{n}}^2_{H^s}+\norm{\nablah v_{n}}_{H^s}^2\\
&\quad \leq  c\left(\norm{v_{n-1}}_{H^s}+\norm{\overline v_{n-1}}^2_{H^s}+\norm{v_{n-1}}_{H^s}^4+\norm{\nablah v_{n-1}}+\norm{\nablah v_{n-1}}^{4/3}_{H^s}\right) \norm{v_{n}}^2_{H^s}\\
&\qquad+\norm{\nablah \overline v_{n}}^2_{H^s}+\frac12 \norm{\nablah \overline v_{n-1}}^2_{H^s}+\frac12\norm{\nablah v_{n-1}}^2_{H^s}.
\end{align*}
Combined with the estimate for $\overline v$ we have 
\begin{align*}
&\partial_t (\norm{v_{n}}^2_{H^s}+\norm{\overline v_{n}}^2_{H^s})+\norm{\nablah v_{n}}_{H^s}^2+\norm{\nablah \overline v_{n}}_{H^s}^2\\
&\quad \leq  c\left(\norm{v_{n-1}}_{H^s}+\norm{\overline v_{n-1}}^2_{H^s}+\norm{v_{n-1}}_{H^s}^4+\norm{\nablah v_{n-1}}+\norm{\nablah v_{n-1}}^{4/3}_{H^s}\right) \norm{v_{n}}^2_{H^2}\\
&\qquad + c\left(\norm{v_{n-1}}^2_{H^s}+\norm{\overline v_{n-1}}^2_{H^s}\right) \norm{\overline v_{n}}^2_{H^s} +\frac12\norm{\nablah\overline v_{n-1}}^2_{H^s}+\frac12\norm{\nablah v_{n-1}}^2_{H^s},
\end{align*}
and Gronwall's inequality yields
\begin{align*}
\begin{split}
&\norm{v_{n}}^2_{H^s}+\norm{\overline v_{n}}^2_{H^s}+\int_0^t \norm{\nablah v_{n}(r)}_{H^s}^2+\norm{\nablah \overline v_{n}(r)}_{H^s}^2 \dr\\
& \leq \left( \norm{v(0)}^2_{H^s}+\norm{\overline v(0)}^2_{H^s}+\frac12\int_0^t \norm{\nablah v_{n-1}(r)}_{H^s}^2+\norm{\nablah \overline v_{n-1}(r)}_{H^s}^2 \dr \right) e^{f_{n-1}(t)},
\end{split}
\end{align*}
where
\begin{align*}
 f_{n-1}(t)= & ct \left( \norm{v_{n-1}}_{L^{\infty}((0,T),H^s)}+\norm{\overline v_{n-1}}^2_{L^{\infty}((0,T),H^s)}+ \norm{v_{n-1}}_{L^{\infty}((0,T),H^s)}^{4}\right)\\
 &+c\int_0^t \norm{\nablah v_{n-1}(r)}_{H^s}+\norm{\nablah v_{n-1}(r)}^{4/3}_{H^s}\dr \\
 \leq &ct \left( \norm{v_{n-1}}_{L^{\infty}((0,T),H^s)}+\norm{\overline v_{n-1}}^2_{L^{\infty}((0,T),H^s)}+ \norm{v_{n-1}}_{L^{\infty}((0,T),H^s)}^{4}\right)\\
 & +c t^{1/3}\norm{\nablah v_{n-1}}_{L^2((0,T),H^s)}^{4/3}+ct^{1/2}\norm{\nablah v_{n-1}}_{L^2((0,T),H^s)}.
\end{align*}

For $T$ sufficiently small this implies that $\norm{v_{n}}_{L^{\infty}((0,T),H^s)}$, $\norm{\overline v_{n}}_{L^{\infty}((0,T),H^s)}$, $\norm{\nablah \! v_{n}}_{L^2((0,T),H^s)}$ and $\norm{\nablah \overline v_{n}}_{L^2((0,T),H^s)}$ are uniformly bounded, and thus also $\tilde v_{n}$ and $\nablah p_n$ are bounded in these norms.

With similar estimates for $v_n-v_m$ we see that $(v_n)_n$ is a Cauchy sequence in
$L^{\infty}((0,T),H^{s-2})$ and $(\nablah v_n)_n$ in $L^2((0,T),H^{s-2})$ for $T$ sufficiently small. Hence, 
we find a $v$ such that
\begin{align*}
 v_n & \to v \quad \text{in } C^0([0,T],H^{s-2}_{\per}(\Omega)) \text{ and} \\
 \nablah v_n & \to \nablah v \quad \text{in } L^2((0,T),H^{s-2}_{\per}(\Omega)).
\end{align*}
By the energy inequality we obtain 
\begin{align*}
v\in L^{\infty}((0,T),H^s_{\per}(\Omega))&,\quad  \nablah v \in L^2((0,T),H^s_{\per}(\Omega))\mbox{ and }\\
\partial_t v \in L^{\infty}((0,T),H^{s-2}_{\per}(\Omega))&,\quad \partial_t \nablah v \in L^2((0,T),H^{s-2}_{\per}(\Omega)).
\end{align*}
Additionally, we have for $\kappa\in(0,2)$ 
\begin{align*}
\norm{v-v_n}_{H^{s-\kappa}} 	& \leq c (\norm{v}_{H^s}+\norm{v_n}_{H^s})^{1-\kappa/2}\norm{v-v_n}_{H^{s-2}}^{\kappa/2}\to 0 \quad(n\to\infty),
\end{align*}
and thus $v\in C^0([0,T],H^{s-\kappa}_{\per}(\Omega))$. 
By the same arguments $(\overline v_n)_n$ and 
$(\tilde v_n)_n$ converge to some $\overline v$ and $\tilde v$ 
with the same regularities as $v$ and the equation 
for $p_n$ yields the convergence of $(p_n)_n$ to some 
$p \in L^{\infty}((0,T),H^s_{\per}(G)) \cap L^2((0,T),H^{s+1}_{\per}(G))$ 
(which is uniquely determined up to a constant) with
\begin{align*}
\partial_t \overline v + \overline v \cdot \nablah  \overline v - \Deltah \overline v + \nablah  p  & =-K(\tilde v) , \\
\divh \overline  v & = 0,\\
\overline v(t=0)& =\frac{1}{2h}\int_{-h}^{h} v_{0}(0,x,y,\xi)\dxi
\end{align*}
and
\begin{align*}
\partial_t \tilde v + \tilde v \cdot \nablah  \tilde v & + \overline v \cdot \nablah  \tilde v+ \tilde v \cdot \nablah  \overline v + w \partial_z \tilde v - \Deltah \tilde v  = K(\tilde v),\\
 w(t,x,y,z) &=-\int_{-h}^{z}\divh \tilde v (t,x,y,\xi) \dxi+\frac{z+h}{2h}\int_{-h}^{h}\divh \tilde v (t,x,y,\xi) \dxi,\\
 \tilde v(t=0) & =v_{0}-\overline v(t=0).
\end{align*}
What is left is to show that $\tilde v$ is 
average free in the $z$ direction. We set 
$u(t,x,y):=\frac{1}{2h}\int_{-h}^{h} \tilde v (t,x,y,\xi) \dxi$. 
From the above equation it follows that 
\begin{align*}
\partial_t u + \overline v \cdot \nablah u + u \cdot \nablah  \overline v  - \Deltah u & = 0,\\
\tilde u(t=0) & =0.
\end{align*}
Multiplication with $u$ in $L^2(G)$ gives
\begin{align*}
\frac 12 \partial_t \norm{u}_{L^2}^2+\norm{ \nablah u}_{L^2}^2
&\leq c \norm{\overline v}_{L^\infty} \norm{u}_{L^2}\norm{ \nablah u}_{L^2},
\end{align*}
and from this we get $\norm{u(t)}_{L^2}^2=0$. Thus, we eventually have
\begin{align*}
w(t,x,y,z)=-\int_{-h}^{z}\divh \tilde v (t,x,y,\xi) \dxi,
\end{align*}
and so $\overline v$ and $\tilde v$ solve 
(\ref{eq:primequhorviscbar})-(\ref{eq:primequcoupling}) 
with $A=\Deltah$, which implies that $v$ is a solution to  (\ref{eq:primequhorvisc}).

The uniqueness and continuous dependence 
on the data of that solution is a direct 
consequence of the energy inequality shown above.
\end{proof}

It follows immediately from the above proof, 
that if in addition $\partial_z v_0\in H^s(\Omega)$ 
then also the regularity of the solution in the 
vertical directions is increased. We will need 
this when investigating the equations with half horizontal 
viscosity.

\begin{corollary}\label{cor:horvisclocz}
Assume that under the conditions of 
Theorem \ref{th:horviscloc} additionally 
$\partial_z v_0\in H^s_{\per}(\Omega)$. 
Then we have for the solution $v$ to 
(\ref{eq:primequhorvisc}) obtained in 
Theorem \ref{th:horviscloc} that additionally 
\begin{align*}
\partial_z v & \in L^{\infty}((0,T), H^{s}_{\per}(\Omega)) \cap C^0([0,T], H^{s-\kappa}_{\per}(\Omega)),\\ \partial_x\partial_z v,\partial_y\partial_z v & \in L^2((0,T),H^{s}_{\per}(\Omega))
\end{align*}
for some $T>0$ and all $\kappa\in(0,1)$.
\end{corollary}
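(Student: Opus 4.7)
The plan is to repeat the energy argument of Theorem \ref{th:horviscloc} applied to $u:=\partial_z v$. Applying $\partial_z$ to the primitive equation (\ref{eq:primequhorvisc}) and using $\partial_z p=0$ together with $\partial_z w=-\divh v$ yields
\begin{align*}
\partial_t u + v\cdot\nablah u + u\cdot\nablah v - (\divh v)u + w\partial_z u - \Deltah u = 0,
\end{align*}
with horizontally periodic boundary conditions, $w(z=\pm h)=0$, and initial datum $\partial_z v_0\in H^s_{\per}(\Omega)$. Structurally this is a horizontally viscous transport-type equation of the same type already treated in Theorem \ref{th:horviscloc}, with the advantage that it is linear in $u$ once $v$ is regarded as given. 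This suggests two possible routes: invoking the linear theory of Section 3 directly, or carrying $u_n:=\partial_z v_n$ along in the iterative scheme used to construct $v$.

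I would take the second route and perform, for each $n$, an $H^s$-energy estimate for $u_n$ parallel to the one already done for $v_n$. Applying $\nablatoalpha$ with $|\alpha|\le s$ to the equation for $u_n$ and pairing with $\nablatoalpha u_n$ in $L^2(\Omega)$, each resulting term admits a direct analogue in the proof of Theorem \ref{th:horviscloc}: the transport term $w_{n-1}\partial_z u_n$ is controlled through $w_{n-1}(z=\pm h)=0$ together with $\|\partial_z w_{n-1}\|_{L^\infty}\le c\|\divh v_{n-1}\|_{H^2}$, the convective term $v_{n-1}\cdot\nablah u_n$ is handled by horizontal integration by parts, and the lower-order term $(\divh v_{n-1})u_n$ is absorbed using the uniform $L^\infty_t H^s$-bound on $v_n$ already secured in Theorem \ref{th:horviscloc}.

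The only genuinely new difficulty is the top-order term $u_n\cdot\nablah v_n$, which is of top order in both factors. The worst contribution $\langle u_n\cdot\nablah\nablatoalpha v_n,\nablatoalpha u_n\rangle$ is controlled via Lemma \ref{lemma:lowreg}(a) by
\begin{align*}
c\|u_n\|_{L^2}\|\nablatoalpha v_n\|_{H^1}\bigl(\|\nablah\nablatoalpha u_n\|_{L^2}^{1/2}\|\nablatoalpha u_n\|_{L^2}^{1/2}+\|\nablatoalpha u_n\|_{L^2}\bigr),
\end{align*}
and by Young's inequality it can be absorbed into $\|\nablah u_n\|_{H^s}^2$ with a remainder depending polynomially on $\|v_n\|_{H^s}$ and $\|u_n\|_{H^s}$. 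Combined with the estimate already established for $v_n$, this closes a Gronwall-type inequality on a possibly shorter time interval and produces uniform bounds
\begin{align*}
\|u_n\|_{L^\infty((0,T),H^s)}+\|\nablah u_n\|_{L^2((0,T),H^s)}\le C.
\end{align*}
Aubin--Lions (Lemma \ref{le:aubinlions}) then extracts a subsequence converging in $C^0([0,T],H^{s-\kappa}_{\per}(\Omega))$ for every $\kappa\in(0,1)$, the limit being identified with $\partial_z v$ by uniqueness. The main obstacle is therefore precisely this coupling $u_n\cdot\nablah v_n$ at top order; once it is successfully absorbed into the viscous dissipation of $u$, the remaining argument is a verbatim repetition of the one in Theorem \ref{th:horviscloc}.
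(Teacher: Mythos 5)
Your proposal is correct and is essentially the paper's intended argument: the paper gives no separate proof of this corollary, stating only that it ``follows immediately'' from the proof of Theorem \ref{th:horviscloc}, and your plan of differentiating the equation in $z$ (which kills the pressure via $\partial_z p=0$ and turns $\partial_z w$ into $-\divh v$) and rerunning the iteration/energy scheme for $u=\partial_z v$ is exactly that, including the correct identification of $u\cdot\nablah v$ as the only genuinely new top-order term, absorbed via Lemma \ref{lemma:lowreg}(a) and the $L^2_t H^s$ control of $\nablah v$. One cosmetic slip: in your displayed bound the roles of the factors in Lemma \ref{lemma:lowreg}(a) should be $f=\nablah\nablatoalpha v_n\in L^2$ and $g=u_n\in H^1$, giving $c\norm{\nablah\nablatoalpha v_n}_{L^2}\norm{u_n}_{H^1}(\cdots)$ rather than $c\norm{u_n}_{L^2}\norm{\nablatoalpha v_n}_{H^1}(\cdots)$, which does not change the Young/Gronwall conclusion.
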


Under the additional boundary condition 
$\partial_z v(z=\pm h)=0$ Cao, Li and 
Titi showed in \cite{CaoLiTiti2016} that for 
$s=2$ the solution to (\ref{eq:primequhorvisc}) 
exists global in time. They construct the solution 
by approximating the system with only horizontal 
viscosity by the system with full viscosity, showing 
bounds on the solution which are independent of the 
vertical viscosity and then letting this vertical 
tend to $0$. The uniform bounds proved by them can 
be carried over directly to our equation, the 
additional boundary condition (which is preserved 
by the equation, see Proposition \ref{pro:boundcondz}) 
is only needed for estimates on the $\partial_{zz}v$ 
term and therefore we obtain literally the same 
estimates for our problem as they do in the limiting 
case. This implies that the local solutions obtained 
in Theorem \ref{th:horviscloc} can be extended 
globally in time, which yields the global in time 
part of Theorem \ref{th:introd1}.

\begin{theorem}\label{th:horviscglob}
For any  $v_0\in H_{\per}^2(\Omega)$ 
with $\divh \overline v_0=0$ and any $T>0$ 
there exists a unique strong solution $v$  
to (\ref{eq:primequhorvisc}) with
\begin{align*}
 v & \in L^{\infty}((0,T), H_{\per}^{2}(\Omega)) \cap C^0([0,T], H_{\per}^{2-\kappa}(\Omega))\\
\partial_x v,\partial_y v & \in L^2((0,T),H_{\per}^{2}(\Omega))
\end{align*}
for all $\kappa\in(0,1)$.
\end{theorem}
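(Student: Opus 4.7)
The plan is to combine the local existence theorem (Theorem \ref{th:horviscloc}) with global-in-time a priori $H^2$ bounds obtained by following, essentially verbatim, the argument of Cao--Li--Titi in \cite{CaoLiTiti2016}. By Theorem \ref{th:horviscloc} applied with $s=2$, for every $v_0 \in H^2_{\per}(\Omega)$ with $\divh \overline v_0 = 0$ there exists a local strong solution on some interval $[0,T_*)$ of maximal existence. To prove $T_* = \infty$ it suffices to show that $\|v(t)\|_{H^2}$ remains bounded on every finite subinterval $[0,T] \subset [0,T_*)$, after which the local theorem can be iterated.

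First I would derive the standard $L^2$ energy identity and the $H^1$ enstrophy bound by testing (\ref{eq:primequhorvisc}) against $v$ and $-\Delta v$ respectively; the transport term $w\partial_z v$ integrates to zero thanks to $w(z=\pm h)=0$ and $\divh v + \partial_z w = 0$, exactly as in the bounded-domain case, without requiring any vertical boundary condition on $v$. Next I would split $v = \overline v + \tilde v$ and reproduce the Cao--Li--Titi bootstrap: an $L^6$ bound for $\tilde v$ obtained by testing against $|\tilde v|^4 \tilde v$, followed by an $H^1$ estimate for $\partial_z \tilde v$ (using the equation for $\partial_z \tilde v$, which replaces $w \partial_z v$ by $-\partial_z w\,\partial_z v + w \partial_z^2 v$ where $\partial_z w = -\divh v$), and finally a Gronwall step for $\|\nablah v\|_{H^1}$ and $\|\partial_z v\|_{L^2}$ that closes the $H^2$ estimate.

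The main point to verify, and really the only subtle one, is that the Neumann condition $\partial_z v(z=\pm h)=0$ which is imposed in \cite{CaoLiTiti2016} is never actually used in any of the estimates that survive the limit $\nu_2 \to 0$. In their proof that boundary condition is invoked exclusively when integrating by parts in $z$ against the vertical viscosity term $\nu_2 \partial_{zz} v$ (to turn $\langle \nu_2 \partial_{zz} v, \varphi\rangle$ into $-\nu_2 \langle \partial_z v, \partial_z \varphi\rangle$ without boundary contributions). Since our system has $\nu_2 = 0$, those terms are absent and no boundary integrals on $G \times \{\pm h\}$ ever appear; the horizontal integrations by parts are justified by the periodicity in $(x,y)$, and the only $z$-integration by parts that remains is the one against the transport coefficient $w$, which produces no boundary term because $w(z=\pm h)=0$. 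Thus their a priori bounds transfer literally to our setting.

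With the $H^2$ a priori bound in hand, on any $[0,T]$ with $T < T_*$ one obtains $\|v\|_{L^\infty((0,T),H^2)} + \|\nablah v\|_{L^2((0,T),H^2)} \leq C(T, \|v_0\|_{H^2})$ where $C$ is finite and depends continuously on $T$. A standard continuation argument then excludes finite-time blow-up: if $T_* < \infty$, one could restart the local theorem from data $v(T_*-\varepsilon)$ bounded in $H^2$ uniformly in $\varepsilon$, producing an extension past $T_*$ and contradicting maximality. The regularity classes and uniqueness statement then follow immediately from the corresponding conclusions in Theorem \ref{th:horviscloc}. I expect the verification that no estimate of Cao--Li--Titi secretly requires the Neumann boundary condition (beyond the $\partial_{zz}v$ terms) to be the main, and essentially only, technical obstacle; everything else is bookkeeping.
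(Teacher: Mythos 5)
Your proposal matches the paper's argument: the paper likewise obtains Theorem \ref{th:horviscglob} by combining the local well-posedness of Theorem \ref{th:horviscloc} with the a priori $H^2$ bounds of Cao--Li--Titi, observing that their Neumann condition $\partial_z v(z=\pm h)=0$ enters only through estimates on the $\partial_{zz}v$ term and is therefore dispensable once the vertical viscosity is absent, so the local solution extends globally. The paper is just as brief on this point and does not reproduce the Cao--Li--Titi estimates either.
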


If we additionally assume a homogeneous 
Neumann boundary condition for the initial 
data in the vertical direction, then this 
boundary condition is preserved in time.

\begin{proposition}\label{pro:boundcondz}
Assume that under the conditions of Theorem 
\ref{th:horviscloc} additionally $\partial_z v_0(z= h)=0$. 
Then we have also for the solution $v$ to 
(\ref{eq:primequhorvisc}) that $\partial_z v(z=h)=0$, 
and the same holds for $z=-h$.
\end{proposition}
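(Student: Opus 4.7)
The strategy is to show that $\phi(t,x,y):=\partial_z v(t,x,y,h)$ satisfies a closed $2D$ linear parabolic equation on $G$ with zero initial datum, so that an $L^2(G)$ energy estimate forces $\phi\equiv 0$; the argument at $z=-h$ is identical.

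First I would differentiate (\ref{eq:primequhorvisc}) in $z$. Because $\partial_z p=0$ and $\partial_z w=-\divh v$ by the divergence-free condition, $u:=\partial_z v$ satisfies
\begin{align*}
\partial_t u + u\cdot\nablah v + v\cdot\nablah u - (\divh v)\,u + w\,\partial_z u - \Deltah u = 0
\end{align*}
in $(0,T)\times\Omega$ with horizontal periodicity. Since $w(t,x,y,\pm h)=0$, the vertical transport term $w\,\partial_z u$ vanishes on the top and bottom, so taking the trace at $z=h$ produces the $2D$ equation
\begin{align*}
\partial_t\phi + \phi\cdot\nablah v|_{z=h} + v|_{z=h}\cdot\nablah\phi - (\divh v|_{z=h})\,\phi - \Deltah\phi = 0
\end{align*}
on $(0,T)\times G$ with $\phi(0,\cdot)\equiv 0$. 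Testing with $\phi$ in $L^2(G)$, integrating the transport term by parts using the horizontal periodicity, and employing the $2D$ Gagliardo-Nirenberg inequality $\|\phi\|_{L^4(G)}^2\le c\|\phi\|_{L^2(G)}\|\phi\|_{H^1(G)}$ to control the $\phi\cdot\nablah v|_{z=h}\cdot\phi$ term, I would obtain
\begin{align*}
\ddt\|\phi\|_{L^2(G)}^2 + \|\nablah\phi\|_{L^2(G)}^2 \le c\bigl(1+\|\nablah v|_{z=h}\|_{L^2(G)}^2\bigr)\|\phi\|_{L^2(G)}^2.
\end{align*}
Since $\nablah v\in L^2((0,T),H^s_{\per}(\Omega))$ with $s\ge 2$, the trace $\nablah v|_{z=h}$ lies in $L^2((0,T),L^2(G))$, and Gronwall's inequality then forces $\phi\equiv 0$.

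The main obstacle is justifying the trace of $\partial_z u=\partial_{zz}v$ at $z=h$, which is only a distribution when $s=2$. To handle this I would approximate $v_0$ by smooth data $v_{0,n}\in C^\infty_{\per}(\Omega)$ with $v_{0,n}\to v_0$ in $H^s(\Omega)$, $\divh\overline v_{0,n}=0$, and $\partial_z v_{0,n}(z=\pm h)=0$ (constructible by horizontal mollification combined with an even extension in $z$ across $z=\pm h$, which preserves the Neumann condition); apply the above computation to the corresponding smooth solutions $v_n$, for which all traces are classical; and pass to the limit via the continuous dependence provided by Theorem \ref{th:horviscloc}, which gives $v_n\to v$ in $C^0([0,T],H^{s-\kappa}_{\per}(\Omega))$. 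Choosing $\kappa$ so that $s-\kappa>3/2$, the trace theorem yields $\partial_z v_n|_{z=h}\to\partial_z v|_{z=h}$ in $L^2(G)$, and since each $\partial_z v_n|_{z=h}$ vanishes by the smooth argument, the limit $\partial_z v|_{z=h}$ vanishes as well.
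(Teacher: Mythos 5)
Your proposal is correct and follows essentially the same route as the paper: differentiate in $z$, use $w(z=\pm h)=0$ to kill the $w\,\partial_{zz}v$ term on the trace, obtain a closed linear $2D$ parabolic equation for $\partial_z v|_{z=h}$ with zero initial datum, and conclude by an $L^2(G)$ energy estimate and Gronwall. The paper simply performs this computation directly (closing the estimate with $\|v\|_{L^\infty}$ rather than Gagliardo--Nirenberg); your additional approximation layer to justify taking the trace of $\partial_{zz}v$ is a reasonable, if not strictly demanded, refinement.
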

\begin{proof}
%We show it for $s\geq 3$, the case $s=2$ follows then by approximation with smooth solutions.
Taking the derivative in the vertical direction 
of (\ref{eq:primequhorvisc}) gives
\begin{align*}
\partial_t \partial_z v 
+ v \cdot \nablah  \partial_z v
+ \partial_z v \cdot \nablah  v 
+ \partial_z w \partial_z v
+ w \partial_{zz} v
- \Deltah \partial_z v   = 0,
\end{align*}
and for $z= h$ we obtain for $u(t,x,y):= \partial_z v(z= h)$
\begin{align*}
\partial_t u
+ v(z= h) \cdot \nablah u
+ u \cdot \nablah  v(z= h) 
-\divh v(z= h)  u
- \Deltah u   & = 0,
\end{align*}
with $u(t=0) =0$. Multiplication with $u$ in $L^2(G)$ gives
\begin{align*}
\frac 12 \partial_t \norm{u}_{L^2}^2+\norm{ \nablah u}_{L^2}^2
&\leq c \norm{v}_{L^\infty} \norm{u}_{L^2}\norm{ \nablah u}_{L^2}
\end{align*}
an this implies $\norm{u(t)}_{L^2}=0$. For $z=-h$ we proceed analogously.
\end{proof}

\section{Half horizontal viscosity}
Here we show that the equation at least 
is locally well-posed, when we only have 
half horizontal viscosity. 
We first turn to the more involved case 
(\ref{eq:primequhalfhorvisc}) 
and prove Theorem \ref{th:introd2}.  
\subsection{Proof of Theorem \ref{th:introd2}} 
Let us briefly describe the strategy of the 
proof. We will assume initial data 
$v_0,\partial_z v_0 \in H^s_{\per}$ 
with $s\geq 3$ for which additionally a Rayleigh condition
\begin{align*}
\frac{1}{\eta} \leq \frac{1}{|\partial_{zz} v_{1}(t=0)|} \leq \eta,\quad \frac{1}{\eta} \leq \frac{1}{|\partial_{zz} v_2(t=0)|} \leq \eta
\end{align*}
holds for some $\eta>1$, i.e. 
$\partial_z v_0 \in H^s_{\per,\eta}(\Omega)$. 
We have $v_{zz}(t=0)\in C^0(\Omega)$ because 
of $s\geq3$, so the above point-wise condition makes sense.

We consider for $\varepsilon>0$ the system 
(\ref{eq:primequhalfhorvisc}) with $A_{\per}$ replaced by 
%\begin{align*}\label{eq:primequhorvisceps}
%\begin{split}
%\partial_t v + v \cdot \nablah  v + w \partial_z v - A_{\varepsilon} v + \nablah  p  & = 0 \qquad\mbox{in }(0,T)\times\Omega,  \\
%\partial_z p  & = 0 \qquad\mbox{in }(0,T)\times\Omega,  \\
%\divh v + \partial_z w & = 0 \qquad\mbox{in }(0,T)\times \Omega,\\
%v(t=0)& = v_0  \!\!\qquad\mbox{in }\Omega
%\end{split}
%\end{align*}
%with periodic boundary conditions imposed on $v$ and $p$ in the horizontal %directions, $w(z=\pm h)=0$ and
\begin{align*}
 A_{\varepsilon}=\left(\begin{matrix} \varepsilon\partial_{xx}+\partial_{yy} & 0 \\ 0 & \partial_{xx}+\varepsilon\partial_{yy}\end{matrix}\right).
\end{align*}
Corollary \ref{cor:horvisclocz} guarantees 
the existence of a solution for $\varepsilon>0$ 
and implies the continuity of $\partial_{zz}v$, 
so we have a Rayleigh condition 
$\frac{1}{2\eta} \leq \frac{1}{|\partial_{zz} v_{i}|} \leq 2\eta$ also in some initial time interval.
Using this we show an $\varepsilon$-independent estimate 
for the $H^s$-norm of the corresponding solutions and 
perform the limit $\varepsilon\to 0$.

The main difficulty will be to control 
the highest derivatives in the horizontal directions 
$\norm{(\partial_x,\partial_y)^{\alpha} v(t)}_{L^2(\Omega)}$ 
for $|\alpha|=s$. To obtain an estimate 
for this norms we follow the idea by Masmoudi 
and Wong for the $2D$ primitive Euler 
equation and use the Rayleigh condition 
to obtain bounds on $\norm{\partial_z v(t)}_{H^s(\Omega)}$, 
but in difference to the $2D$ Euler case these 
bounds cannot be carried over to bounds on $v$ directly.

To work around that problem we combine this 
approach with the idea of Cao and Titi to split 
$v$ via $v=\overline v +\tilde v$ and consider the 
set of coupled equations 
(\ref{eq:primequhorviscbar})-(\ref{eq:primequcoupling}) 
for $A=A_{\varepsilon}$. We can deduce bounds for the 
mean value $\overline v$ from the $2D$ Navier-Stokes 
equation (\ref{eq:primequhorviscbar}), and by Poincar\'e's 
inequality it suffices to have a bound for $\partial_z v$ to 
control $\tilde v$. We divide this proof into three 
steps, the estimates on the barotropic mode, the estimates 
on the baroclinic mode and the convergence of the solutions for 
$\varepsilon>0$ to the solution of (\ref{eq:primequhalfhorvisc}) 
when $\varepsilon$ tends to $0$.

\subsubsection*{Estimates for the barotropic mode}
The estimates for $\overline v$ are the easier part, 
only the coupling-term  has to be handled with some care. 
We obtain the following result.
\begin{lemma}\label{lemma:baroclinic}
Let $s\geq3$, $\varepsilon>0$, $v_0 \in H_{\per}^s(\Omega)$ 
with $\divh \overline v_0=0$, $\partial_z v_0 \in H_{\per}^s(\Omega)$ 
and $v=\overline v+ \tilde v$ be the solution to 
(\ref{eq:primequhorviscbar})-(\ref{eq:primequcoupling}) 
for $A=A_{\varepsilon}$ according to Corollary \ref{cor:horvisclocz}. 
Then for any $\delta>0$
\begin{align*}
&\ddt \frac12\norm{\overline{v}}^2_{H^s(G)}+\norm{\partial_y \overline{v}_{1}}_{H^s(G)}^2+\norm{\partial_x \overline{v}_{2}}_{H^s(G)}^2+\varepsilon \norm{\partial_x \overline{v}_{1}}_{H^s(G)}^2+\varepsilon\norm{\partial_y \overline{v}_{2}}_{H^s(G)}^2\\
&\qquad\qquad \leq c \norm{\overline{v}}_{H^{s}(G)}^3+\frac{c}{\delta} \norm{\partial_z \tilde v}^2_{H^{s}(\Omega)}\norm{\overline v}^2_{H^{s}(G)}\\
&\qquad\qquad\quad +\delta( \norm{\partial_y \partial_z\tilde v_1}^2_{H^{s}(\Omega)} + \norm{\partial_y \overline v_1}^2_{H^{s}(G)}+ \norm{\partial_x \overline v_2}^2_{H^{s}(G)}+\norm{\partial_x \partial_z\tilde v_2}^2_{H^{s}(\Omega)})
\end{align*}
holds.
\end{lemma}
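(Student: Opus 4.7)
The plan is to perform a higher-order horizontal energy estimate on the equation for $\overline v$. Since $\overline v$ depends only on $(x,y)$, I work with horizontal multi-indices only; for each $|\alpha|\leq s$ I apply $\nablatoalpha$ to the baroclinic equation (\ref{eq:primequhorviscbar}), test against $\nablatoalpha \overline v$ in $L^2(G)$, and sum. The viscous contribution, after integration by parts in the periodic variables, produces exactly $\norm{\partial_y \overline v_1}_{H^s}^2 + \norm{\partial_x \overline v_2}_{H^s}^2 + \varepsilon \norm{\partial_x \overline v_1}_{H^s}^2 + \varepsilon \norm{\partial_y \overline v_2}_{H^s}^2$ on the left-hand side; the pressure term vanishes because $\divh \overline v = 0$; and the self-transport $\overline v \cdot \nablah \overline v$ is handled as in $2D$ Navier--Stokes: the top-order piece drops by divergence-freeness and the Kato--Ponce commutator estimate, combined with the $2D$ embedding $H^{s-1}(G) \hookrightarrow L^{\infty}(G)$, gives the contribution $c\norm{\overline v}_{H^s(G)}^3$.

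The main work, and the place where the asymmetric viscosity actually matters, is the coupling term $-\langle \nablatoalpha K(\tilde v), \nablatoalpha \overline v\rangle_G$. My first step is the algebraic identity $\tilde v_j \partial_j \tilde v_i + \tilde v_i \partial_j \tilde v_j = \partial_j(\tilde v_i \tilde v_j)$, which recasts $K$ in divergence form: $K_i = \partial_j \overline M_{ij}$ with $\overline M_{ij}(x,y) = \frac{1}{2h}\int_{-h}^{h} \tilde v_i \tilde v_j \dz$. Integration by parts then produces $-\sum_{i,j} \langle \nablatoalpha \overline M_{ij}, \partial_j \nablatoalpha \overline v_i\rangle_G$. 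For the \emph{off-diagonal} contributions $(i,j) \in \{(1,2),(2,1)\}$ the factor $\partial_j \overline v_i$ is exactly one of the controlled derivatives $\partial_y \overline v_1$ or $\partial_x \overline v_2$, so Cauchy--Schwarz and Young reduce matters to an $H^s(G)$ bound on $\overline M_{ij}$. The \emph{diagonal} contributions $i=j$ are dangerous because they involve the uncontrolled $\partial_x \overline v_1$ and $\partial_y \overline v_2$; here I invoke the $2D$ divergence-free identity $\partial_x \overline v_1 = -\partial_y \overline v_2$ to trade $\partial_x \overline v_1$ for $\partial_y \overline v_2$ in the $(1,1)$ term (and symmetrically in $(2,2)$), and then integrate by parts the new derivative onto $\overline M$. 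A direct computation shows $\partial_y \overline M_{11} = \frac{1}{h}\int \tilde v_1 \partial_y \tilde v_1 \dz$ and $\partial_x \overline M_{22} = \frac{1}{h}\int \tilde v_2 \partial_x \tilde v_2 \dz$, which are products of $\tilde v$ with the \emph{good} derivatives $\partial_y \tilde v_1$ and $\partial_x \tilde v_2$. This is the decisive moment in the proof and the only place the specific choice of $A_{\varepsilon}$ (converging to $A_{\perp}$) is actually used.

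To close the estimates I use two ingredients. First, the Poincar\'e observation: since $\tilde v$ is zero-mean in $z$, so is every purely horizontal derivative of $\tilde v$, which yields $\norm{\tilde v}_{H^s(\Omega)} \leq c \norm{\partial_z \tilde v}_{H^s(\Omega)}$, $\norm{\partial_y \tilde v_1}_{H^s(\Omega)} \leq c\norm{\partial_y \partial_z \tilde v_1}_{H^s(\Omega)}$ and $\norm{\partial_x \tilde v_2}_{H^s(\Omega)} \leq c\norm{\partial_x \partial_z \tilde v_2}_{H^s(\Omega)}$. Second, the Moser product estimate together with $H^{s-1}(\Omega)\hookrightarrow L^\infty(\Omega)$ (valid for $s\geq 3$) gives $\norm{\overline M_{ij}}_{H^s(G)} \leq c\norm{\partial_z \tilde v}_{H^s}^2$ for the off-diagonals, and $\norm{\partial_y \overline M_{11}}_{H^s(G)} \leq c \norm{\partial_z \tilde v}_{H^s}\norm{\partial_y \partial_z \tilde v_1}_{H^s}$, $\norm{\partial_x \overline M_{22}}_{H^s(G)} \leq c \norm{\partial_z \tilde v}_{H^s}\norm{\partial_x \partial_z \tilde v_2}_{H^s}$ for the diagonals. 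A final application of Young's inequality with parameter $\delta$ packages these contributions into the absorbable $\delta$-terms $\delta(\norm{\partial_y \overline v_1}_{H^s}^2 + \norm{\partial_x \overline v_2}_{H^s}^2 + \norm{\partial_y \partial_z \tilde v_1}_{H^s}^2 + \norm{\partial_x \partial_z \tilde v_2}_{H^s}^2)$ and the Gronwall-friendly term $\frac{c}{\delta}\norm{\partial_z \tilde v}_{H^s}^2 \norm{\overline v}_{H^s(G)}^2$, yielding precisely the stated inequality.
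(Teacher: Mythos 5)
Your proposal is correct and follows essentially the same route as the paper: the divergence-form rewriting of $K(\tilde v)$, the trade $\partial_x\overline v_1=-\partial_y\overline v_2$ on the diagonal terms followed by integrating the new derivative back onto the quadratic average (producing exactly the good factors $\partial_y\tilde v_1$, $\partial_x\tilde v_2$), and the final Poincar\'e-plus-Young packaging are precisely the paper's argument, merely organized via the matrix $\overline M_{ij}$ instead of componentwise.
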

\begin{proof}
Applying $\nablatoalpha$ to (\ref{eq:primequhorviscbar}) 
and multiplying it in $L^2(G)$ 
by $\nablatoalpha \overline{v}$ 
for $|\alpha|\leq s$ yields 
\begin{multline*}
\ddt \frac12 \norm{\nablatoalpha \overline{v}}^2_{L^2}+\norm{\nablatoalpha \partial_y \overline{v}_{1}}_{L^2}^2+\norm{\nablatoalpha \partial_x \overline{v}_{2}}_{L^2}^2+\varepsilon \norm{\nablatoalpha \partial_x \overline{v}_{1}}_{L^2}^2+\varepsilon\norm{\nablatoalpha \partial_y \overline{v}_{2}}_{L^2}^2\\
=- \langle \nablatoalpha (\overline{v} \cdot \nablah  \overline{v}), \nabla^{\alpha }\overline{v}\rangle_{G}
- \langle \nablatoalpha \nablah  p, \nabla^{\alpha }\overline{v}\rangle_{G}
-\langle \nablatoalpha K(\tilde v), \nabla^{\alpha }\overline{v}\rangle_{G}.
\end{multline*}
Due to the divergence free condition, 
the periodic boundary conditions and $s\geq 3$ we get
\begin{align*}
 \langle \nablatoalpha \nablah  p, \nabla^{\alpha }\overline{v}\rangle_{G} = 0 \quad \mbox{ and } \quad |\langle \nablatoalpha (\overline{v} \cdot \nablah  \overline{v}), \nabla^{\alpha }\overline{v}\rangle_{G}| & \leq c \norm{\overline{v}}_{H^{s}}^3.
\end{align*}
For the coupling-term we have
\begin{align*}
2h\langle \nablatoalpha K(\tilde v),\! \nabla^{\alpha }\overline{v}\rangle_{G}
% &\qquad = \int_{-h}^h  \langle \nablatoalpha[( \tilde v(\cdot,\cdot,\xi) \cdot \nablah)  \tilde v(\cdot,\cdot,\xi)+  \divh \tilde v(\cdot,\cdot,\xi) \; \tilde v(\cdot,\cdot,\xi)], \nabla^{\alpha }\overline{v}\rangle_{G}  \dxi\\
& = \int_{-h}^h  \langle \nablatoalpha[\partial_x(\tilde v_1(\cdot,\cdot,\xi))^2+\partial_y (\tilde v_2(\cdot,\cdot,\xi) \tilde v_1(\cdot,\cdot,\xi))], \nabla^{\alpha }\overline{v}_1\rangle_{G}  \dxi\\
&\quad +\! \int_{-h}^h \! \langle \nablatoalpha[\partial_x(\tilde v_1(\cdot,\cdot,\xi) \tilde v_2(\cdot,\cdot,\xi))\! + \partial_y (\tilde v_2(\cdot,\cdot,\xi))^2 ], \nabla^{\alpha }\overline{v}_2\rangle_{G}  \dxi.
\end{align*}
For the first integrand we obtain
\begin{align*}
 \langle \nablatoalpha&[\partial_x(\tilde v_1(\cdot,\cdot,\xi))^2+\partial_y (\tilde v_2(\cdot,\cdot,\xi) \tilde v_1(\cdot,\cdot,\xi))], \nabla^{\alpha }\overline{v}_1\rangle_{G} \\
& =-\langle \nablatoalpha(\tilde v_1(\cdot,\cdot,\xi))^2, \nabla^{\alpha }\partial_x \overline{v}_1\rangle_{G} -\langle \nablatoalpha[\tilde v_2(\cdot,\cdot,\xi) \tilde v_1(\cdot,\cdot,\xi)],  \nabla^{\alpha } \partial_y\overline{v}_1\rangle_{G}\\
& =\langle \nablatoalpha(\tilde v_1(\cdot,\cdot,\xi))^2, \nabla^{\alpha }\partial_y \overline{v}_2\rangle_{G} -\langle \nablatoalpha[\tilde v_2(\cdot,\cdot,\xi) \tilde v_1(\cdot,\cdot,\xi)],  \nabla^{\alpha } \partial_y\overline{v}_1\rangle_{G}\\
& =-\langle \nablatoalpha[2\tilde v_1(\cdot,\cdot,\xi)\partial_y \tilde v_1(\cdot,\cdot,\xi)], \nabla^{\alpha }\overline{v}_2\rangle_{G} -\langle \nablatoalpha[\tilde v_2(\cdot,\cdot,\xi) \tilde v_1(\cdot,\cdot,\xi)],  \nabla^{\alpha } \partial_y\overline{v}_1\rangle_{G},
\end{align*}
and thus
\begin{align*}
& |\langle \nablatoalpha[\partial_x(\tilde v_1(\cdot,\cdot,\xi))^2+\partial_y (\tilde v_2(\cdot,\cdot,\xi) \tilde v_1(\cdot,\cdot,\xi))], \nabla^{\alpha }\overline{v}_1\rangle_{G}| \\
&\qquad\qquad\qquad\qquad \leq c \norm{\tilde v_1(\cdot,\cdot,\xi)}_{H^{s}(G)}\norm{\partial_y \tilde v_1(\cdot,\cdot,\xi)}_{H^{s}(G)}\norm{\overline v_2}_{H^{s}(G)}\\
&\qquad\qquad\qquad\qquad\quad +c \norm{\tilde v_2(\cdot,\cdot,\xi)}_{H^{s}(G)}\norm{\tilde v_1(\cdot,\cdot,\xi)}_{H^{s}(G)}\norm{\partial_y \overline v_1}_{H^{s}(G)}.
\end{align*}
Analogously it follows for the second integrand
\begin{align*}
& |\langle \nablatoalpha[\partial_x(\tilde v_1(\cdot,\cdot,\xi) \tilde v_2(\cdot,\cdot,\xi)) + \partial_y (\tilde v_2(\cdot,\cdot,\xi))^2 ], \nabla^{\alpha }\overline{v}_2\rangle_{G} | \\
&\qquad\qquad\qquad\qquad \leq c  \norm{\tilde v_1(\cdot,\cdot,\xi)}_{H^{s}(G)}\norm{\tilde v_2(\cdot,\cdot,\xi)}_{H^{s}(G)}\norm{\partial_x \overline v_2}_{H^{s}(G)}\\
&\qquad\qquad\qquad\qquad\quad +c \norm{\tilde v_2(\cdot,\cdot,\xi)}_{H^{s}(G)}\norm{\partial_x \tilde v_2(\cdot,\cdot,\xi)}_{H^{s}(G)}\norm{\overline v_1}_{H^{s}(G)}.
\end{align*}
Using
\begin{align*}
&\int_{-h}^{h} \norm{\tilde v_1(\cdot,\cdot,\xi)}_{H^{s}(G)}\norm{\partial_y \tilde v_1(\cdot,\cdot,\xi)}_{H^{s}(G)}\dxi\cdot \norm{\overline v_2}_{H^{s}(G)}\\
&\qquad\qquad\qquad\qquad\leq \norm{\tilde v_1}_{L^2((-h,h),H^{s}(G))}\norm{\partial_y \tilde v_1}_{L^2((-h,h),H^{s}(G))} \norm{\overline v_2}_{H^{s}(G)}\\
&\qquad\qquad\qquad\qquad\leq \norm{\tilde v_1}_{H^{s}(\Omega)}\norm{\partial_y \tilde v_1}_{H^{s}(\Omega)}  \norm{\overline v_2}_{H^{s}(G)}
\end{align*}
we get
\begin{align*}
& |\langle \nablatoalpha K(\tilde v), \nabla^{\alpha }\overline{v}\rangle_{G}| \\
& \qquad\qquad \leq c \norm{\tilde v_1}_{H^{s}(\Omega)} (\norm{\partial_y \tilde v_1}_{H^{s}(\Omega)}  \norm{\overline v_2}_{H^{s}(G)}
+\norm{\tilde v_2}_{H^{s}(\Omega)}  \norm{\partial_y \overline v_1}_{H^{s}(G)})\\
& \qquad\qquad\quad +c \norm{\tilde v_2}_{H^{s}(\Omega)} (\norm{\tilde v_1}_{H^{s}(\Omega)} \norm{\partial_x \overline v_2}_{H^{s}(G)}
+\norm{\partial_x \tilde v_2}_{H^{s}(\Omega)}  \norm{\overline v_1}_{H^{s}(G)})\\
& \qquad\qquad \leq \frac{c}{\delta} \norm{\tilde v}^2_{H^{s}(\Omega)}\norm{\overline v}^2_{H^{s}(G)}\\
& \qquad\qquad\quad +\delta( \norm{\partial_y \tilde v_1}^2_{H^{s}(\Omega)}+ \norm{\partial_y \overline v_1}^2_{H^{s}(G)}+ \norm{\partial_x \overline v_2}^2_{H^{s}(G)}+\norm{\partial_x \tilde v_2}^2_{H^{s}(\Omega)})
\end{align*}
for any $\delta>0$. This leads after summing over 
$\alpha$ and with Poincar\'e's inequality 
for $\tilde v$ the stated estimate.
%\begin{align*}
%\begin{split}
%&\ddt \frac12\norm{\overline{v}}^2_{H^s}+\norm{\partial_y \overline{v}_{1}}_{H^s}^2+\norm{\partial_x \overline{v}_{2}}_{H^s}^2+\varepsilon \norm{\partial_x \overline{v}_{1}}_{H^s}^2+\varepsilon\norm{\partial_y \overline{v}_{2}}_{H^s}^2\\
%&\qquad\qquad \leq c \norm{\overline{v}}_{H^{s}}^3+\frac{c}{\delta} \norm{\partial_z \tilde v}^2_{H^{s}}\norm{\overline v}^2_{H^{s}}\\
%&\qquad\qquad\quad +\delta( \norm{\partial_y \partial_z\tilde v_1}^2_{H^{s}} + \norm{\partial_y \overline v_1}^2_{H^{s}}+ \norm{\partial_x \overline v_2}^2_{H^{s}}+\norm{\partial_x \partial_z\tilde v_2}^2_{H^{s}})
%\end{split}
%\end{align*}
%for any $\delta>0$.
\end{proof}

\subsubsection*{Estimates for the baroclinic mode}
Here we prove estimates for the vertical 
derivative $\partial_z v$, which is given by the equation
\begin{align}\label{eq:primequvz}
\begin{split}
\partial_t \partial_z v 
+ v \cdot \nablah  \partial_z v
+ \partial_z v \cdot \nablah  v 
+ \partial_z w \partial_z v
+ w \partial_{zz} v
- A_{\varepsilon}\partial_z v   = 0.
\end{split}
\end{align}
The straight forward part is to estimate the 
lower derivatives and those which contain at 
least one derivative in the vertical direction 
(because $w$ and $w_z$ have the same regularity 
with respect to $x$ and $y$).
We multiply the equation by $\nablatoalpha \partial_z v$,
\begin{multline}\label{eq:multplyerbarotropic}
\langle \ddt \nablatoalpha \partial_z v,\nablatoalpha \partial_z v\rangle_{\Omega}-\langle A_{\varepsilon} \nablatoalpha \partial_z v,\nablatoalpha \partial_z v\rangle_{\Omega}\\
=-\langle \nablatoalpha  (w \partial_{zz} v + v \cdot \nablah  \partial_z v-\partial_z v \cdot \nablah  v+ \partial_z v\; \divh v),\nablatoalpha \partial_z v \rangle_{\Omega}.
\end{multline}

The following lemma is a direct consequence of the fact that for $s\geq 3$ the first order derivatives of $v$ and $\partial_z v$ are in $L^{\infty}(\Omega)$.

\begin{lemma}\label{lemma:prepestimatebarotropic1}
Let $s\geq3$, $\varepsilon>0$, $v_0\in H_{\per}^s(\Omega)$ 
with $\divh \overline v_0=0$, $\partial_z v_0 \in H_{\per}^s(\Omega)$ 
and $v=\overline v+\tilde v$ be the solution to 
(\ref{eq:primequhorviscbar})-(\ref{eq:primequcoupling}) 
for $A=A_{\varepsilon}$ according to Corollary 
\ref{cor:horvisclocz}. Then we have for $|\alpha|<s$ 
or $\nabla^{\alpha }=\partial_z \nablatoalphastr$ with 
$|\alpha'|=s-1$ 
\begin{align*}
\frac12 \ddt&\norm{\nablatoalpha \partial_z v}_{L^2}^2  +\norm{\nablatoalpha \partial_z \partial_y v_{1}}_{L^2}^2+\norm{\nablatoalpha \partial_z \partial_x v_{2}}_{L^2}^2\\
&\qquad +\varepsilon \norm{\nablatoalpha \partial_z \partial_x v_{1}}_{L^2}^2+\varepsilon\norm{\nablatoalpha \partial_z \partial_y v_{2}}_{L^2}^2
 \leq c \; (\norm{v}_{H^s} +\norm{\partial_z v}_{H^s}) \norm{\partial_z v}_{H^s}^2.
\end{align*}
\end{lemma}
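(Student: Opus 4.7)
The plan is to multiply (\ref{eq:primequvz}) by $\nablatoalpha \partial_z v$ in $L^2(\Omega)$ and analyze each of the four nonlinear terms on the right-hand side of (\ref{eq:multplyerbarotropic}). The $A_\varepsilon$ contribution on the left produces, after integration by parts in the horizontal directions using the periodic boundary conditions, exactly the four dissipation terms appearing on the left-hand side of the claimed inequality, so nothing has to be done there beyond unpacking $A_\varepsilon$.

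I would handle the most delicate term $\langle \nablatoalpha(w\partial_{zz}v), \nablatoalpha \partial_z v\rangle$ first. The leading piece $\langle w\, \nablatoalpha \partial_{zz}v, \nablatoalpha \partial_z v\rangle$ is rewritten via integration by parts in $z$, using $w(z=\pm h)=0$ and the identity $\partial_z w = -\divh v$, to obtain $\tfrac{1}{2}\langle \divh v, |\nablatoalpha \partial_z v|^2\rangle$, which is controlled by $\|\divh v\|_{L^\infty}\|\nablatoalpha \partial_z v\|_{L^2}^2 \leq c\|v\|_{H^s}\|\partial_z v\|_{H^s}^2$ since $s\geq 3$ gives $H^2\hookrightarrow L^\infty$. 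The remaining commutator terms $[\nablatoalpha, w]\partial_{zz}v$ are estimated by Moser's product inequality. Here the hypothesis on $\alpha$ becomes essential: when $|\alpha|<s$, every factor carries at most $s$ derivatives in total; when $|\alpha|=s$ but $\nablatoalpha = \partial_z \nablatoalphastr$ with $|\alpha'|=s-1$, I peel off the $\partial_z$ via $\partial_z(w\partial_{zz}v) = \partial_z w\,\partial_{zz}v + w\,\partial_{zzz}v$ and exploit that $\partial_z w = -\divh v \in H^{s-1}$, so the available regularity $\|\partial_z v\|_{H^s}$ is sufficient to close the estimate (the $w\,\partial_{zzz}v$ piece is again absorbed by an integration by parts in $z$).

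For the transport term $\nablatoalpha(v\cdot\nablah \partial_z v)$ I would use the same idea: integrate by parts in the horizontal variables on the leading piece $\langle v\cdot\nablah\nablatoalpha\partial_z v, \nablatoalpha \partial_z v\rangle$ to reduce it to $-\tfrac12\langle\divh v,|\nablatoalpha\partial_z v|^2\rangle$, and bound the commutator $[\nablatoalpha, v\cdot\nablah]\partial_z v$ by Moser's inequality. The two remaining terms, $\nablatoalpha(\partial_z v \cdot \nablah v)$ and $\nablatoalpha(\partial_z v \divh v)$, carry only one extra horizontal derivative outside $\nablatoalpha$, so a direct product rule plus Sobolev embedding suffices; as above, the cases permitted by the hypothesis guarantee that whenever $|\alpha|=s$ there is a $\partial_z$ to redistribute onto one of the factors via Leibniz, keeping the horizontal derivative count at $s-1$.

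The main obstacle is bookkeeping the commutator terms with the tight derivative count dictated by having only "half" a horizontal Laplacian on the left — in particular the restriction on $\alpha$ is sharp, since the excluded case $|\alpha|=s$, $\alpha_3=0$ forces an $(s+1)$-st horizontal derivative on $v$ that cannot be absorbed by the anisotropic dissipation and will have to be treated separately using the Rayleigh condition.
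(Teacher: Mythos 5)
Your overall strategy --- testing (\ref{eq:primequvz}) with $\nablatoalpha\partial_z v$, integrating by parts in $z$ on the top-order pieces $w\,\partial_z(\nablatoalpha\partial_z v)$ and $v\cdot\nablah\nablatoalpha\partial_z v$ using $w(z=\pm h)=0$ and periodicity, and estimating the Leibniz commutators --- is exactly what the paper intends; the paper itself offers no proof beyond the remark that first-order derivatives of $v$ and $\partial_z v$ lie in $L^{\infty}$. Your accounting of why the admissible $\alpha$ keep the horizontal derivative count at $s$ is correct for the contributions of $v\cdot\nablah\partial_z v$, $\partial_z v\cdot\nablah v$ and $\partial_z w\,\partial_z v$, and your treatment of the dissipation and of the leading transport terms is fine.

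The one place where your argument is not yet a proof is the sentence ``the $w\,\partial_{zzz}v$ piece is again absorbed by an integration by parts in $z$''. Only its top-order part $\langle w\,\nablatoalphastr\partial_{zzz}v,\nablatoalphastr\partial_{zz}v\rangle_{\Omega}$ is absorbed that way; the remainder $\sum_{0<\gamma\le\alpha'}\langle\nabla^{\gamma}w\,\nabla^{\alpha'-\gamma}\partial_{zzz}v,\nablatoalphastr\partial_{zz}v\rangle_{\Omega}$ survives, and its extreme term $\nablatoalphastr w\cdot\partial_{zzz}v$ with $|\alpha'|=s-1$ is the only genuinely delicate term of the whole lemma. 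Indeed $\nablatoalphastr w$ already costs $s$ horizontal derivatives of $v$ (equivalently, of $\partial_z \tilde v$ after Poincar\'e), so it is controlled only in $L^{\infty}_zL^2_{xy}$ with no horizontal regularity to spare, while $\partial_{zzz}v$ lies merely in $H^{s-2}$, which for $s=3$ does not embed into $L^{\infty}$. Any H\"older/Gagliardo--Nirenberg pairing of these three factors (for instance via Lemma \ref{lemma:lowreg}(a) with $f=\nablatoalphastr w$, $g=\partial_{zzz}v$, $h=\nablatoalpha\partial_z v$) then produces a factor $\norm{\nablah\nablatoalpha\partial_z v}_{L^2}^{1/2}$, i.e.\ an $(s+1)$-st order derivative that the stated right-hand side does not contain and whose $\partial_x$-component (for $v_1$) is only dissipated with the factor $\varepsilon$. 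For $s\ge4$ the term is harmless because $H^{s-2}\hookrightarrow L^{\infty}$, but for $s=3$ it requires an explicit argument --- e.g.\ admitting additional terms on the right-hand side that are later absorbed, as is done in Lemma \ref{lemma:prepestimatebarotropic2}(a),(e) --- rather than being declared absorbed by the integration by parts.
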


We replace the multiplier $\nablatoalpha \partial_z v_i$ 
in (\ref{eq:multplyerbarotropic}) by 
$\frac{\nablatoalpha \partial_z v_{i}}{\partial_{zz} v_{i}} $ 
to get an estimate if $\nablatoalpha=(\partial_x,\partial_y)^{\alpha}$ 
with $|\alpha|=s$. In the next lemma we give the estimates 
for the different terms, this is the key step in the proof 
of our local well-posedness result. We write here 
$A_{\varepsilon,1}=\varepsilon \partial_{xx}+\partial_{yy}$ 
and $A_{\varepsilon,2}=\partial_{xx}+\varepsilon \partial_{yy}$.

\begin{lemma}\label{lemma:prepestimatebarotropic2}
Let $s\geq3$, $\eta>1$, $\varepsilon>0$, $v_0\in H_{\per}^s(\Omega)$ 
with $\divh \overline v_0=0$, $ \partial_z v_0 \in H_{\per,\eta}^s(\Omega)$ 
and $v=\overline v+\tilde v$ be the solution to 
(\ref{eq:primequhorviscbar})-(\ref{eq:primequcoupling}) 
for $A=A_{\varepsilon}$ according to Corollary \ref{cor:horvisclocz}. 
Then there exists a time $T$ such that 
$\partial_z v(t)\in H_{\per,2\eta}^s(\Omega)$ ($t\leq T$) 
and for $\nablatoalpha=(\partial_x,\partial_y)^{\alpha}$ 
with $|\alpha|=s$ the following identities and estimates hold:\\
a)
\begin{align*}
\langle \nablatoalpha A_{\varepsilon,1} \partial_z v_{1}
, \frac{\nablatoalpha \partial_z v_{1}}{\partial_{zz} v_{1}} \rangle_{\Omega}
\leq& -\varepsilon\frac{1}{2\eta}\norm{\nablatoalpha \partial_{x} \partial_z v_{1}}^2_{L^2}-\frac{1}{2\eta}\norm{\nablatoalpha \partial_{y} \partial_z v_{1}}^2_{L^2}\\
&+c \eta^2\norm{\partial_{z} v_{1}}_{H^{s}}^{5/4}(\varepsilon\norm{\partial_{x}\partial_{z} v_{1}}_{H^{s}}^{7/4}+ \norm{\partial_{y}\partial_{z} v_{1}}_{H^{s}}^{7/4}) 
\end{align*}
and
\begin{align*}
\langle \nablatoalpha A_{\varepsilon,2} \partial_z v_{2}
, \frac{\nabla^{\alpha }\partial_z v_{2}}{\partial_{zz} v_{2}} \rangle_{\Omega}
\leq& - \frac{1}{2\eta}\norm{\nablatoalpha \partial_{x} \partial_z v_{2}}^2_{L^2}-\varepsilon\frac{1}{2\eta}\norm{\nablatoalpha \partial_{y}\partial_z v_{2}}^2_{L^2}\\
&+c \eta^2\norm{\partial_{z} v_{1}}_{H^{s}}^{5/4} (\norm{\partial_{x}\partial_{z} v_{2}}_{H^{s}}^{7/4}+ \varepsilon \norm{\partial_{y}\partial_{z} v_{2}}_{H^{s}}^{7/4}).
\end{align*}
b) 
\begin{align*}
\langle \nablatoalpha (v \cdot \nablah  \partial_z v_{i}), \frac{\nablatoalpha \partial_z v_{i}}{\partial_{zz} v_{i}} \rangle_{\Omega}
 = & \frac{1}{2}\langle \frac{  v\cdot\nablah \partial_{zz} v_{i}}{(\partial_{zz} v_{i})^2}- \frac{\divh v }{\partial_{zz} v_{i}}, (\nablatoalpha \partial_z v_{i})^2\rangle_{\Omega}\\
&+\sum_{\alpha'<\alpha}\langle \nablatoalphaminstr v \cdot \nablah  \nablatoalphastr \partial_z v_{i}, \frac{\nablatoalpha \partial_z v_{i}}{\partial_{zz} v_{i}} \rangle_{\Omega}
\end{align*}
with
\begin{align*}
\left|\sum_{\alpha'<\alpha}\langle \nablatoalphaminstr v \cdot \nablah  \nablatoalphastr \partial_z v_{i}
, \frac{\nablatoalpha \partial_z v_{i}}{\partial_{zz} v_{i}} \rangle_{\Omega}\right|\leq c\eta \norm{v}_{H^s}\norm{\partial_z v}_{H^s}\norm{\partial_z v_{i}}_{H^s}.
\end{align*}
c)
\begin{align*}
\langle \nablatoalpha (w \partial_{zz} v_{i}) &,\frac{\nablatoalpha \partial_z v_{i}}{\partial_{zz} v_{i}} \rangle_{\Omega}\\
= & \frac{1}{2} \langle \frac{ w \partial_{zzz} v_{i}}{(\partial_{zz} v_{i})^2}+ \frac{\divh v }{\partial_{zz} v_{i}},(\nablatoalpha \partial_z v_{i})^2\rangle_{\Omega}+\langle \nablatoalpha (\partial_x v_{1}+\partial_y v_{2}),\nablatoalpha v_{i}\rangle_{\Omega}\\
&+\sum_{0<\alpha'<\alpha}\langle \nablatoalphaminstr w  \nablatoalphastr \partial_{zz} v_{i})
, \frac{\nablatoalpha\partial_z v_{i}}{\partial_{zz} v_{i}} \rangle_{\Omega}
\end{align*}
with
\begin{align*}
\left|\sum_{0<\alpha'<\alpha}\langle \nablatoalphaminstr w  \nablatoalphastr \partial_{zz} v_{i}
, \frac{\nablatoalpha\partial_z v_{i}}{\partial_{zz} v_{i}} \rangle_{\Omega}\right|\leq c\eta\norm{v}_{H^s}\norm{\partial_z v}_{H^s} \norm{\partial_z v_{i}}_{H^s}.
\end{align*}
d)
\begin{align*}
&\left|\langle \nablatoalpha  (\partial_z v \cdot \nablah  v_{1}+\partial_z w \partial_z v_{1}), \frac{\nablatoalpha\partial_z v_{1}}{\partial_{zz} v_{1}} \rangle_{\Omega}\right|\leq c\eta \norm{v}_{H^s}\norm{\partial_z v}_{H^s}^2\\
&\qquad + c\eta\norm{\partial_z v}_{H^s}( \norm{v}_{H^s}+\norm{\partial_z v}_{H^s})\norm{\partial_y v_{1}}_{H^s} + c\eta^2\norm{v}_{H^s}\norm{\partial_z v}^2_{H^s}\norm{\partial_{z}\partial_y v_{1}}_{H^3}
\end{align*}
and
\begin{align*}
&\left|\langle \nablatoalpha (\partial_z v \cdot \nablah  v_{2}+\partial_z w \partial_z v_{2}), \frac{\nablatoalpha\partial_z v_{2}}{\partial_{zz} v_{2}} \rangle_{\Omega}\right|\leq c\eta \norm{v}_{H^s}\norm{\partial_z v}_{H^s}^2\\
&\qquad + c\eta\norm{\partial_z v}_{H^s}( \norm{v}_{H^s}+\norm{\partial_z v}_{H^s})\norm{\partial_x v_{2}}_{H^s} + c\eta^2\norm{v}_{H^s}\norm{\partial_z v}^2_{H^s}\norm{\partial_{z}\partial_x v_{2}}_{H^3}.
\end{align*}
e) For the time derivative we get
\begin{align*}
&\langle (\nablatoalpha \ddt \partial_z v_{i},\frac{\nablatoalpha \partial_z v_{i}}{\partial_{zz} v_{i}} \rangle _{\Omega}=\frac{1}{2} \ddt \norm{\frac{\nablatoalpha \partial_z v_{i}}{\sqrt{\partial_{zz} v_{i}}}}_{L^2}^2\\
& \qquad -\frac{1}{2} \langle (\nablatoalpha \partial_z v_{i})^2, \frac{v \cdot \partial_{zz} \nablah  v_{i}
+ w \partial_{zzz} v_{i}}{(\partial_{zz} v_{i})^2} \rangle_{\Omega}+\frac{1}{2} \langle (\nablatoalpha \partial_z v_{i})^2, \frac{A_{\varepsilon,i}\partial_{zz} v_{i}}{(\partial_{zz} v_{i})^2} \rangle_{\Omega}\\
& \qquad -\frac{1}{2} \langle (\nablatoalpha \partial_z v_{i})^2, \frac{\partial_{zz} v \cdot \nablah  v_{i}
+ \partial_{zz} w \partial_{z} v_{i}
+ 2\partial_{z} v \cdot \nablah \partial_z v_{i} 
+ 2 \partial_z w \partial_{zz} v_{i}}{(\partial_{zz} v_{i})^2} \rangle_{\Omega}
\end{align*}
with
\begin{align*}
|\langle (\nablatoalpha \partial_z v_{1})^2,  \frac{A_{\varepsilon,1}\partial_{zz} v_1}{(\partial_{zz} v_{1})^2} \rangle_{\Omega}| 
\leq & c \eta^2 \norm{\partial_{z}v_1}_{H^s}^{5/4}  (\varepsilon \norm{\partial_{x}\partial_{z}v_1}_{H^s}^{7/4}+  \norm{\partial_{y}\partial_{z}v_1}_{H^s}^{7/4})\\
& +c\eta^3 \norm{\partial_{z} v_1}^{3/2}_{H^s}(\varepsilon\norm{\partial_{x}\partial_{z} v_1}^{3/2}_{H^3}+\norm{\partial_{y}\partial_{z} v_1}^{3/2}_{H^3}),
\end{align*}
\begin{align*}
|\langle (\nablatoalpha \partial_z v_{2})^2,  \frac{A_{\varepsilon,2}\partial_{zz} v_2}{(\partial_{zz} v_{2})^2} \rangle_{\Omega}| 
\leq & c \eta^2 \norm{\partial_{z}v_2}_{H^s}^{5/4}  (\norm{\partial_{x}\partial_{z}v_2}_{H^s}^{7/4}+  \varepsilon \norm{\partial_{y}\partial_{z}v_2}_{H^s}^{7/4})\\
& +c\eta^3 \norm{\partial_{z} v_2}^{3/2}_{H^s}(\norm{\partial_{x}\partial_{z} v_2}^{3/2}_{H^3}+\varepsilon \norm{\partial_{y}\partial_{z} v_2}^{3/2}_{H^3})
\end{align*}
and
\begin{multline*}
\norm{ \frac{\partial_{zz} v \cdot \nablah  v_{i}
+ \partial_{zz} w \partial_{z} v_{i}
+  2\partial_{z} v \cdot \nablah \partial_z v_{i} 
+ 2 \partial_z w \partial_{zz} v_{i}}{(\partial_{zz} v_{i})^2}}_{L^{\infty}}\\
\leq c\eta^2 (\norm{\partial_{z} v}_{H^3}^2+\norm{\partial_{z} v}_{H^3}\norm{v}_{H^3}).
\end{multline*}
\end{lemma}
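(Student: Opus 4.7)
The plan is to carry out integration by parts against the weighted test function $\nablatoalpha \partial_z v_i/\partial_{zz} v_i$ term by term, and to control the residues produced by Leibniz expansions via Sobolev embedding and interpolation. Before that, I would establish the persistence statement $\partial_z v(t)\in H^s_{\per,2\eta}(\Omega)$: since $s\geq 3$ yields $H^{s-1}(\Omega)\hookrightarrow C^0(\overline\Omega)$, the function $\partial_{zz} v$ is continuous in $(t,x,y,z)$ by the continuity $v\in C^0([0,T],H^{s-\kappa}_{\per}(\Omega))$ furnished by Corollary~\ref{cor:horvisclocz}, so the strict inequalities at $t=0$ persist as $1/(2\eta)\leq 1/|\partial_{zz} v_i(t,\cdot)|\leq 2\eta$ on a short time interval. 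Throughout, $\partial_{zz} v_i$ retains the sign inherited from $t=0$, and $1/\partial_{zz} v_i$ is bounded in $L^\infty$ by $2\eta$.

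For part (a), integrating one horizontal derivative by parts in $\langle\nablatoalpha A_{\varepsilon,i}\partial_z v_i,\nablatoalpha \partial_z v_i/\partial_{zz} v_i\rangle$ produces the principal negative terms $-\int(\nablatoalpha \partial_{x,y}\partial_z v_i)^2/\partial_{zz} v_i$, which by the Rayleigh lower bound yield the dissipation with prefactor $1/(2\eta)$. The residual term, in which the derivative falls on $1/\partial_{zz} v_i$, is bounded by Hölder followed by a Gagliardo--Nirenberg / Agmon interpolation between $H^3$ and $H^s$; Young's inequality then produces the stated $5/4$--$7/4$ split together with the $\eta^2$ prefactor. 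For parts (b), (c) and (d), I would apply Leibniz to $\nablatoalpha$ and isolate the principal term (with all horizontal derivatives on $\partial_z v_i$, or on $\partial_{zz} v_i$ in (c)) from the commutator sum. In (b) the principal piece is handled by the symmetry identity $v\cdot\nablah f\cdot f=\tfrac12 v\cdot\nablah f^2$ and horizontal integration by parts, producing the two stated contributions; in (c) integration by parts in $z$ using $w(z=\pm h)=0$ and $\partial_z w=-\divh v$ generates both the $\divh v/\partial_{zz} v_i$ term and, after regrouping one $z$-derivative, the additional $\langle \nablatoalpha\divh v,\nablatoalpha v_i\rangle$ contribution. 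The lower-order commutators in all three parts are controlled by the Sobolev algebra property of $H^s(\Omega)$ for $s\geq 3$ together with the $L^\infty$ bound $\|1/\partial_{zz} v_i\|_\infty\leq 2\eta$, which is the origin of the single $\eta$ prefactor. In (d) the terms in $\partial_y v_1$ (resp.\ $\partial_x v_2$) are explicitly isolated, because these are precisely the components that the half-horizontal dissipation will later absorb.

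Part (e) is a direct differentiation of the weighted norm $\tfrac12\|\nablatoalpha \partial_z v_i/\sqrt{\partial_{zz} v_i}\|^2_{L^2}$; the chain rule leads to the displayed identity once $\partial_t\partial_{zz} v_i$ is expressed by differentiating the $v$-equation twice in $z$, using $\partial_z p=0$ so that the pressure drops out. The $L^\infty$ estimate on the big fraction follows from the algebra structure of $H^3$ together with $\|1/\partial_{zz}v_i\|_\infty\leq 2\eta$, and the $A_{\varepsilon,i}\partial_{zz} v_i$ term is bounded via an $H^3$--$H^s$ interpolation producing the $\eta^2,\eta^3$ prefactors and the $5/4,7/4,3/2$ exponents.

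The hard part is the bookkeeping in (a) and (e): one must track every term involving the highest-order horizontal derivative of $\partial_z v$ and arrange that it is either absorbed by the dissipation $\|\partial_y\partial_z v_1\|_{H^s}^2+\|\partial_x\partial_z v_2\|_{H^s}^2$ coming from $A_\perp$ (with a small constant from Young) or controlled by $\|\partial_z v\|_{H^s}$ to a strictly subquadratic power. The exotic exponents $5/4$, $7/4$, $3/2$ are chosen precisely so that what remains after Young-absorption into the half-horizontal dissipation is subquadratic in the energy, which is what will eventually allow Lemma~\ref{lemma:baroclinic} and this lemma to close together into a differential inequality for the $H^s_\eta$ norm.
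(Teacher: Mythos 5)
Your overall route coincides with the paper's: the same weighted multiplier $\nablatoalpha\partial_z v_i/\partial_{zz}v_i$, persistence of the Rayleigh condition by continuity of $\partial_{zz}v$, integration by parts for the dissipation in (a), Leibniz splitting with commutator bounds via the algebra property of $H^s$, and Agmon-type interpolation for the $5/4$--$7/4$ exponents. However, there is one genuine gap, and it sits exactly at the heart of part (d). After writing $\partial_z w=-\divh v$, the top-order contribution of $\nablatoalpha(\partial_z v\cdot\nablah v_1+\partial_z w\,\partial_z v_1)$ is
\begin{align*}
\partial_z v_1\,\partial_x\nablatoalpha v_1+\partial_z v_2\,\partial_y\nablatoalpha v_1-\nablatoalpha\partial_x v_1\,\partial_z v_1-\nablatoalpha\partial_y v_2\,\partial_z v_1 .
\end{align*}
The terms $\partial_z v_1\,\partial_x\nablatoalpha v_1$ and $\nablatoalpha\partial_x v_1\,\partial_z v_1$ each contain $s+1$ derivatives of $v_1$ with a $\partial_x$ among them, and this quantity is controlled by \emph{nothing} in the scheme: the anisotropic dissipation only yields $\norm{\partial_y\partial_z v_1}_{H^s}$, $\norm{\partial_x\partial_z v_2}_{H^s}$, $\norm{\partial_y\overline v_1}_{H^s}$ and $\norm{\partial_x\overline v_2}_{H^s}$ (plus $\varepsilon$-weighted complements that degenerate as $\varepsilon\to0$), so no Young absorption or interpolation can handle it. The only rescue is that these two terms are \emph{identical} (since $\nablatoalpha$ commutes with $\partial_x$) and cancel exactly; this is the Masmoudi--Wong cancellation the paper invokes explicitly, and it forces you to treat the transport term $\partial_z v\cdot\nablah v_1$ and the stretching term $\partial_z w\,\partial_z v_1$ \emph{together} rather than ``term by term'' as your plan states. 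Asserting that ``the terms in $\partial_y v_1$ are explicitly isolated'' presupposes this cancellation rather than proving it.

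Two smaller omissions in the same spirit: the surviving term $\langle\nablatoalpha\partial_y v_2\,\partial_z v_1,\nablatoalpha\partial_z v_1/\partial_{zz}v_1\rangle$ still carries $s+1$ derivatives on $v_2$ in the uncontrolled $\partial_y$ direction and needs a further integration by parts in $y$ to trade $\nablatoalpha\partial_y v_2$ for $\nablatoalpha v_2$ and $\nablatoalpha\partial_y\partial_z v_1$ (the latter being dissipation-controlled); and in (e) the term $\langle(\nablatoalpha\partial_z v_i)^2,A_{\varepsilon,i}\partial_{zz}v_i/(\partial_{zz}v_i)^2\rangle$ cannot be bounded by putting $A_{\varepsilon,i}\partial_{zz}v_i$ in $L^\infty$ (for $s=3$ it is merely in $L^2$), so one must first integrate by parts once more, which is precisely what generates the additional $\eta^3$-term with exponent $3/2$ in the statement. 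Your sketch names the right exponents but not the manipulations that produce them.
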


\begin{proof}
We have $\partial_z v(t=0)\in H_{\per,\eta}^s(\Omega)$ 
and $\partial_z v$ is continuous, so there exists a $T>0$ 
such that $\partial_z v(t)\in H^s_{\per,2\eta}(\Omega)$ 
for $t\leq T$. We assume in the following for simplicity 
$\partial_{zz} v_{i}>0$.\\ 
a) By integration by parts we obtain
\begin{align*} 
\langle \nablatoalpha \partial_{xx}  \partial_z v_{i}
, & \frac{\nabla^{\alpha }\partial_z v_{i}}{\partial_{zz} v_{i}} \rangle_{\Omega}\\
& =-\norm{\frac{\nablatoalpha\partial_{x} \partial_z v_{i}}{\sqrt{\partial_{zz} v_{i}}}}^2_{L^2}+\langle \nablatoalpha \partial_{x}  \partial_z v_{i}, \frac{\partial_{x}\partial_{zz} v_{i}}{(\partial_{zz} v_{i})^2} \nablatoalpha \partial_z v_{i} \rangle_{\Omega}\\
& \leq -\frac{1}{2\eta}\norm{\nablatoalpha\partial_{x} \partial_z v_{i}}^2_{L^2}+ \eta^2\norm{\partial_{x}\partial_{zz} v_{i}}_{L^{\infty}} \norm{\nablatoalpha \partial_x \partial_z v_{i}}_{L^2} \norm{\nablatoalpha \partial_z v_{i}}_{L^2}\\
& \leq -\frac{1}{2\eta}\norm{\nablatoalpha\partial_{x} \partial_z v_{i}}^2_{L^2}+ c\eta^2\norm{\partial_{x}\partial_{z} v_{i}}_{H^{s}}^{7/4} \norm{\partial_{z} v_{i}}_{H^{1}}^{1/4}  \norm{\nablatoalpha \partial_z v_{i}}_{L^2}
\end{align*}
where we used Lemma \ref{lemma:lowreg} to estimate 
$\norm{\partial_{x}\partial_{zz} v_{i}}_{L^{\infty}}$. 
In the same way we get this estimate for the $y$-derivatives 
and adding them up for $i=1,2$ we obtain the assertion.\\
b)
For 
\begin{align*}
\langle \nablatoalpha (v \cdot \nablah  \partial_z v_{i}), \frac{\nablatoalpha \partial_z v_{i}}{\partial_{zz} v_{i}} \rangle_{\Omega}
 =& \langle v \cdot \nablah  \nablatoalpha \partial_z v_{i})
, \frac{\nablatoalpha \partial_z v_{i}}{\partial_{zz} v_{i}} \rangle_{\Omega}\\
 & +\sum_{\alpha'<\alpha}\langle \nablatoalphaminstr v \cdot \nablah  \nablatoalphastr \partial_z v_{i})
, \frac{\nablatoalpha \partial_z v_{i}}{\partial_{zz} v_{i}} \rangle_{\Omega}
\end{align*}
we have
\begin{align*}
\langle v \cdot \nablah  \nablatoalpha \partial_z v_{i})
, \frac{\nablatoalpha\partial_z v_{i}}{\partial_{zz} v_{i}} \rangle_{\Omega}
& =-\frac{1}{2}\langle \frac{\partial_x v_{1}}{\partial_{zz} v_{i}}-\frac{v_{1}\partial_x\partial_{zz} v_{i} }{(\partial_{zz} v_{i})^2},(\nablatoalpha \partial_z v_{i})^2 \rangle_{\Omega}\\
& \quad -\frac{1}{2}\langle \frac{\partial_y v_{2}}{\partial_{zz} v_{i}}-\frac{v_{2}\partial_y\partial_{zz} v_{i} }{(\partial_{zz} v_{i})^2},(\nablatoalpha \partial_z v_{i})^2 \rangle_{\Omega}\\
&= -\frac{1}{2}\langle \frac{\divh v }{\partial_{zz} v_{i}}- \frac{  v\cdot\nablah \partial_{zz} v_{i}}{(\partial_{zz} v_{i})^2}, (\nablatoalpha \partial_z v_{i})^2\rangle_{\Omega}
\end{align*}
and all the terms in the sum contain only 
derivatives of order less or equal $s$, so
\begin{align*}
|\langle  \nablatoalphaminstr v \cdot \nablah  \nablatoalphastr \partial_z v_{i})
, \frac{\nablatoalpha \partial_z v_{i}}{\partial_{zz} v_{i}} \rangle_{\Omega}|\leq c\eta \norm{v}_{H^s}\norm{\partial_z v}_{H^s}\norm{\nablatoalpha\partial_z v_{i}}_{L^2}.
\end{align*}
c) This is shown similarly to b), 
but due to the bad regularity of $w$ 
we have to separate two terms from the sum, 
\begin{align*}
\langle \nablatoalpha (w \partial_{zz} v_{i}),\frac{\nablatoalpha \partial_z v_{i}}{\partial_{zz} v_{i}} \rangle_{\Omega}
=&\langle \nablatoalpha w \partial_{zz} v_{i},\frac{\nablatoalpha \partial_z v_{i}}{\partial_{zz} v_{i}} \rangle_{\Omega}
+\langle w \partial_{zz}\nablatoalpha v_{i},\frac{\nablatoalpha \partial_z v_{i}}{\partial_{zz} v_{i}} \rangle_{\Omega}\\
&+\sum_{0<\alpha'<\alpha}\langle \nablatoalphaminstr w  \nablatoalphastr \partial_{zz} v_{i})
, \frac{\nablatoalpha\partial_z v_{i}}{\partial_{zz} v_{i}} \rangle_{\Omega}.
\end{align*}
Only the first term is new compared to b), we get
\begin{align*}
\langle \nablatoalpha w \partial_{zz} v_{i},\frac{\nablatoalpha \partial_z v_{i}}{\partial_{zz} v_{i}} \rangle_{\Omega} = \langle \nablatoalpha w,\nablatoalpha \partial_z v_{i}\rangle_{\Omega}
= \langle \nablatoalpha (\partial_x v_{1}+\partial_y v_{2}),\nablatoalpha v_{i}\rangle_{\Omega}
\end{align*}
and this is also the term for which 
forces us to use the more complicated multiplier.\\
%Furthermore,
%\begin{align*}
%\langle w \partial_{zz} \nablatoalpha  v_{i},\frac{\nablatoalpha \partial_z v_{i}}{\partial_{zz} v_{i}} \rangle_{\Omega}
%& = \frac{1}{2} \langle \frac{ w \partial_{zzz} v_{i}}{(\partial_{zz} v_{i})^2},(\nablatoalpha \partial_z v_{i})^2\rangle_{\Omega}-\frac{1}{2} \langle \frac{\partial_z w}{\partial_{zz} v_{i}},(\nablatoalpha \partial_z v_{i})^2\rangle\\
%& = \frac{1}{2} \langle \frac{ w \partial_{zzz} v_{i}}{(\partial_{zz} v_{i})^2},(\nablatoalpha \partial_z v_{i})^2\rangle_{\Omega}+\frac{1}{2} \langle \frac{\divh v}{\partial_{zz} v_{i}},(\nablatoalpha \partial_z v_{i})^2\rangle
%\end{align*}
%As before we  get for the terms in the sum 
%\begin{align*}
%|\langle \nablatoalphaminstr w  \nablatoalphastr \partial_{zz} v_{i})
%, \frac{\nablatoalpha\partial_z v_{i}}{\partial_{zz} v_{i}} \rangle_{\Omega}|\leq c\eta\norm{v}_{H^s}\norm{\partial_z v}_{H^s} \norm{\nablatoalpha \partial_z v_{i}}_{L^2}.
%\end{align*}
d) Here we also split the expression into 
the highest order derivatives and a sum,
\begin{align*}
\langle \nablatoalpha (\partial_z v \cdot  \nablah & v_{i}+  \partial_z w \partial_z v_{i}), \frac{\nablatoalpha\partial_z v_{i}}{\partial_{zz} v_{i}} \rangle_{\Omega}\\
= & \langle \partial_z v \cdot \nablah  \nablatoalpha v_{i}- \nablatoalpha\divh v \; \partial_z v_{i}, \frac{\nablatoalpha\partial_z v_{i}}{\partial_{zz} v_{i}} \rangle_{\Omega}\\
& +\sum_{\alpha'<\alpha}\langle \nablatoalphaminstr \partial_z v \cdot \nablah  \nablatoalphastr  v_{i}-  \nablatoalphastr \divh v  \; \nablatoalphaminstr \partial_z v_{i}, \frac{\nablatoalpha\partial_z v_{i}}{\partial_{zz} v_{i}} \rangle_{\Omega}.
\end{align*}
The terms in the sum are once again easy to estimate,
\begin{multline*}
|\langle \nablatoalphaminstr \partial_z v \cdot \nablah  \nablatoalphastr  v_{i}-  \nablatoalphastr \divh v  \; \nablatoalphaminstr \partial_z v_{i}, \frac{\nablatoalpha\partial_z v_{i}}{\partial_{zz} v_{i}} \rangle_{\Omega}| \\ 
\leq c\eta\norm{v}_{H^s}\norm{\partial_z v}_{H^s} \norm{\nablatoalpha \partial_z v_{i}}_{L^2}.
\end{multline*}
For the other term we use a cancellation, for $i=1$ we obtain
\begin{multline*}
\langle \partial_z v \cdot \nablah  \nablatoalpha v_{1}- \nablatoalpha\divh v \; \partial_z v_{1}, \frac{\nablatoalpha\partial_z v_{1}}{\partial_{zz} v_{1}} \rangle_{\Omega}\\
=  \langle \partial_z v_2 \cdot \partial_y \nablatoalpha v_{1}- \nablatoalpha\partial_y v_{2} \cdot \partial_z v_{1}, \frac{\nablatoalpha\partial_z v_{1}}{\partial_{zz} v_{1}} \rangle_{\Omega}.
\end{multline*}
The first term can be estimated directly
\begin{align*}
|\langle \partial_z v_2 \cdot \partial_y \nablatoalpha v_{1}, \frac{\nablatoalpha\partial_z v_{1}}{\partial_{zz} v_{1}} \rangle_{\Omega}|
& \leq  c\eta\norm{\partial_z v_2}_{H^2}\norm{\nablatoalpha \partial_y v_{1}}_{L^2} \norm{\nablatoalpha \partial_z v_{1}}_{L^2}
\end{align*}
and for the second one we integrate by parts
\begin{align*}
\langle \nablatoalpha \partial_z v_{1} \; \partial_z v_{1}, \frac{\nablatoalpha\partial_y v_{2}}{\partial_{zz} v_{1}} \rangle_{\Omega}=
&-\langle \nablatoalpha \partial_z v_{1} \; \partial_y\partial_z v_{1} +\partial_z v_{1}\; \nablatoalpha \partial_y \partial_z v_{1} , \frac{\nablatoalpha v_{2} }{\partial_{zz} v_{1}} \rangle_{\Omega}\\
&+\langle \nablatoalpha \partial_z v_{1} \; \partial_z v_{1}, \partial_{zz}\partial_y v_{1}\frac{\nablatoalpha v_{2}}{(\partial_{zz} v_{1})^2} \rangle_{\Omega}.
\end{align*}
It follows
\begin{align*}
|\langle  \nablatoalpha \partial_z v_{1}\; \partial_z v_{1}, \frac{\nablatoalpha\partial_y v_{2}}{\partial_{zz} v_{1}} \rangle_{\Omega}|\leq 
& c\eta \norm{v_{2}}_{H^s}\norm{\partial_z v_{1}}_{H^3}( \norm{\partial_z v_{1}}_{H^s}+\norm{\partial_y \partial_z v_{1}}_{H^s})\\
& +c\eta^2 \norm{v_{2}}_{H^s} \norm{\partial_z v_{1}}_{H^2} \norm{\partial_{z}\partial_y v_{1}}_{H^3} \norm{\partial_z v_{1}}_{H^s}.
\end{align*}
So we have
\begin{align*}
|\langle \partial_z v & \cdot \nablah  \nablatoalpha v_{1}- \nablatoalpha\divh v \; \partial_z v_{1}, \frac{\nablatoalpha\partial_z v_{1}}{\partial_{zz} v_{1}} \rangle_{\Omega}| \leq c\eta \norm{v}_{H^s}\norm{\partial_z v}_{H^s}^2\\
& + c\eta\norm{\partial_z v}_{H^s}( \norm{v}_{H^s}+\norm{\partial_z v}_{H^s})\norm{\partial_y v_{1}}_{H^s} + c\eta^2\norm{v}_{H^s}\norm{\partial_z v}^2_{H^s}\norm{\partial_{z}\partial_y v_{1}}_{H^3}.
\end{align*}
For $i=2$ we proceed in the same way.\\
e) Last we have to handle the time 
derivative for this multiplier,
\begin{align*}
\langle (\nablatoalpha \ddt \partial_z v_{i},\frac{\nablatoalpha \partial_z v_{i}}{\partial_{zz} v_{i}} \rangle _{\Omega}
& =\frac{1}{2} \ddt \norm{\frac{\nablatoalpha \partial_z v_{i}}{\sqrt{\partial_{zz} v_{i}}}}_{L^2}^2- \frac{1}{2} \langle (\nablatoalpha \partial_z v_{i})^2, \ddt \frac{1}{\partial_{zz} v_{i}} \rangle_{\Omega}
\end{align*}
with
\begin{align*}
\ddt \frac{1}{\partial_{zz} v_{i}} = &
\frac{v \cdot \partial_{zz} \nablah  v_{i}
+ \partial_{zz} v \cdot \nablah  v_{i}}{(\partial_{zz} v_{i})^2}
- \frac{A_{\varepsilon,i}\partial_{zz} v_{i} }{(\partial_{zz} v_{i})^2}\\
& +  \frac{\partial_{zz} w \partial_{z} v_{i}
+ w \partial_{zzz} v_{i}+2\partial_{z} v \cdot \nablah \partial_z v_{i} 
+ 2 \partial_z w \partial_{zz} v_{i}}{(\partial_{zz} v_{i})^2}.
\end{align*}
The first fraction will cancel 
with terms obtained in b) and c), 
and the lengthy expression inherits 
only functions which are in $L^{\infty}(\Omega)$. 
This yields
\begin{multline*}
\norm{ \frac{\partial_{zz} v \cdot \nablah  v_{i}
+ \partial_{zz} w \partial_{z} v_{i}
+  2\partial_{z} v \cdot \nablah \partial_z v_{i} 
+ 2 \partial_z w \partial_{zz} v_{i}}{(\partial_{zz} v_{i})^2}}_{L^{\infty}}\\
\leq c\eta^2 \norm{\partial_{z} v}_{H^3}(\norm{v}_{H^3}+\norm{\partial_{z} v}_{H^3}).
\end{multline*}
Finally, for $\langle (\nablatoalpha \partial_z v_{i})^2,  \frac{A_{\varepsilon,i}\partial_{zz} v_i}{(\partial_{zz} v_{i})^2} \rangle_{\Omega}$ we obtain
\begin{multline*}
\langle (\nablatoalpha \partial_z v_{i})^2,  \frac{\partial_{xx}\partial_{zz} v_i}{(\partial_{zz} v_{i})^2} \rangle_{\Omega}\\
=-\langle 2(\nablatoalpha \partial_z v_{i})( \partial_{x}\nablatoalpha \partial_{z} v_{i}),  \frac{\partial_{x}\partial_{zz}v_i}{(\partial_{zz} v_{i})^2} \rangle_{\Omega}
+\langle (\nablatoalpha \partial_z v_{i})^2,  \frac{(\partial_{x}\partial_{zz} v_i)^2}{(\partial_{zz} v_{i})^3} \rangle_{\Omega}.
\end{multline*}
As in a) we get
\begin{align*}
|\langle 2(\nablatoalpha \partial_z v_{i})( \partial_{x}\nablatoalpha \partial_{z} v_{i}),  \frac{\partial_{x}\partial_{zz}v_i}{(\partial_{zz} v_{i})^2} \rangle_{\Omega}|
& \leq c\eta^2 \norm{\nablatoalpha \partial_z v_{i}}_{L^2}\norm{\partial_{x}\partial_{z}v_i}_{H^s}^{7/4} \norm{\partial_{z}v_i}_{H^1}^{1/4}
\end{align*}
and from Lemma \ref{lemma:lowreg} follows
\begin{align*}
|\langle (\nablatoalpha \partial_z v_{i})^2,  \frac{(\partial_{x}\partial_{zz} v_i)^2}{(\partial_{zz} v_{i})^3} \rangle_{\Omega}|
& \leq \eta^3 \norm{\nablatoalpha \partial_z v_{i}}^2_{L^2} \norm{\partial_{x}\partial_{zz} v_i}^2_{L^{\infty}}\\
& \leq c\eta^3 \norm{\nablatoalpha \partial_z v_{i}}^2_{L^2} \norm{\partial_{x}\partial_{z} v_i}^{3/2}_{H^3}\norm{\partial_{z} v_i}^{1/2}_{H^2}.
\end{align*}
Combining the two estimates yields
\begin{align*}
|\langle (\nablatoalpha \partial_z v_{i})^2,  \frac{\partial_{xx}\partial_{zz} v_i}{(\partial_{zz} v_{i})^2} \rangle_{\Omega}|
\leq & c\eta^2 \norm{\nablatoalpha \partial_z v_{i}}_{L^2} \norm{ \partial_{x}\nablatoalpha \partial_{z} v_{i}}_{L^2}  \norm{\partial_{x}\partial_{z}v_i}_{H^3}^{3/4} \norm{\partial_{z}v_i}_{H^2}^{1/4}\\
& +c\eta^3 \norm{\nablatoalpha \partial_z v_{i}}^2_{L^2} \norm{\partial_{x}\partial_{z} v_i}^{3/2}_{H^3}\norm{\partial_{z} v_i}^{1/2}_{H^2}
\end{align*}
and similarly we get
\begin{align*}
|\langle (\nablatoalpha \partial_z v_{i})^2,  \frac{\partial_{yy}\partial_{zz} v_i}{(\partial_{zz} v_{i})^2} \rangle_{\Omega}|
\leq & c\eta^2 \norm{\nablatoalpha \partial_z v_{i}}_{L^2} \norm{ \partial_{y}\nablatoalpha \partial_{z} v_{i}}_{L^2}  \norm{\partial_{y}\partial_{z}v_i}_{H^3}^{3/4} \norm{\partial_{z}v_i}_{H^2}^{1/4}\\
& +c\eta^3 \norm{\nablatoalpha \partial_z v_{i}}^2_{L^2} \norm{\partial_{y}\partial_{z} v_i}^{3/2}_{H^3}\norm{\partial_{z} v_i}^{1/2}_{H^2}.
\end{align*}
\end{proof}
Now we can give the estimate for the baroclinic mode.
\begin{lemma}\label{lemma:estimatebarotropic}
Let $s\geq3$, $\eta>1$, $\varepsilon>0$, 
$v_0\in H_{\per}^s(\Omega)$  with 
$\divh \overline v_0=0$, $ \partial_z v_0 \in H_{\per,\eta}^s(\Omega)$ 
and $v=\overline v+\tilde v$ be the solution to 
(\ref{eq:primequhorviscbar})-(\ref{eq:primequcoupling}) 
for $A=A_{\varepsilon}$ according to Corollary 
\ref{cor:horvisclocz}. Then the following estimate 
holds in $[0,T]$ for some $T>0$
\begin{align*}%\label{eq:estimatebarotropic}
\begin{split}
&\ddt \eta\norm{\partial_z v}^2_{H^s_{2\eta}}
+\frac{1}{2}\norm{\partial_{y}\partial_z v_{1}}^2_{H^s}
+\frac{1}{2}\norm{\partial_{x}\partial_z v_{2}}^2_{H^s}
+\frac{\varepsilon}{2}\norm{\partial_{x}\partial_z v_{1}}^2_{H^s}+\frac{\varepsilon}{2}\norm{\partial_{y}\partial_z v_{2}}^2_{H^s}\\
&\quad \leq c(\eta) (\norm{\overline v}_{H^s}^2+\norm{\partial_z v}_{H^s}^2+\norm{\overline v}_{H^s}^2\norm{\partial_z v}_{H^2}^4)+ c(\eta)(1+\varepsilon)(\norm{\partial_{z}v}_{H^s}^{10}+\norm{\partial_{z}v}_{H^s}^{6})\\
& \qquad +\frac{1}{2}\norm{\partial_{y} \overline v_{1}}_{H^s}+\frac{1}{2}\norm{\partial_{y} \overline v_{2}}_{H^s}.
\end{split}
\end{align*}
\end{lemma}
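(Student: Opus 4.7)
\smallskip
\textbf{Proof proposal.} The strategy is to split the $H^s_{2\eta}$ norm of $\partial_z v$ according to its definition into three groups of terms and treat each with the appropriate multiplier. Recall
\begin{align*}
\norm{\partial_z v}_{H^s_{2\eta}}^2 = \norm{\partial_z v}^2_{H^{s-1}} + \norm{\partial_z^{s+1} v}^2_{L^2} + \sum_{|\alpha|=s,\,\alpha_3=0}\norm{\tfrac{\nablatoalpha \partial_z v}{\sqrt{|\partial_{zz} v|}}}^2_{L^2}.
\end{align*}
For the first two groups (all multi-indices with $|\alpha|<s$ and the purely vertical top-order derivative $\partial_z^s$, i.e.\ derivatives containing at least one $z$-derivative at top order), I differentiate equation (\ref{eq:primequvz}) by $\nablatoalpha$, test with the standard $L^2$-multiplier $\nablatoalpha \partial_z v$, and invoke Lemma \ref{lemma:prepestimatebarotropic1} to obtain contributions bounded by $c(\norm{v}_{H^s}+\norm{\partial_z v}_{H^s})\norm{\partial_z v}_{H^s}^2$ after summing.

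The delicate group is the top-order purely horizontal multi-indices $|\alpha|=s$, $\alpha_3=0$, where I differentiate (\ref{eq:primequvz}), test with the weighted multiplier $\nablatoalpha \partial_z v_i/\partial_{zz} v_i$, and invoke Lemma \ref{lemma:prepestimatebarotropic2}. First I note that by the continuity of $\partial_{zz} v$ guaranteed by Corollary \ref{cor:horvisclocz} together with the initial Rayleigh condition, there is a time interval $[0,T]$ on which $\partial_z v(t)\in H^s_{\per,2\eta}(\Omega)$, so the multiplier is well defined. Then the key algebraic observation is that the contribution $\frac12\langle(\nablatoalpha\partial_z v_i)^2,\,\frac{v\cdot\partial_{zz}\nablah v_i+w\partial_{zzz} v_i}{(\partial_{zz} v_i)^2}\rangle$ arising from $\ddt(1/\partial_{zz}v_i)$ in part e) exactly cancels with the corresponding contributions from parts b) and c). What remains is the clean time derivative $\frac12\ddt\|\nablatoalpha\partial_z v_i/\sqrt{\partial_{zz}v_i}\|_{L^2}^2$, the good dissipation terms from a), and various remainders that must be absorbed.

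The single remaining high-order term in c) is $\langle \nablatoalpha(\partial_x v_1+\partial_y v_2),\nablatoalpha v_i\rangle_{\Omega}$, which I handle by decomposing $v=\overline v+\tilde v$. The diagonal pieces $\langle \nablatoalpha \partial_x v_i,\nablatoalpha v_i\rangle$ vanish by periodic integration by parts in $x$. For $i=1$ the remaining cross piece $\langle \nablatoalpha \partial_y v_2,\nablatoalpha v_1\rangle$ splits into four; the $\overline v_2$--$\overline v_1$ part vanishes thanks to $\divh\overline v=0$ and periodicity, the $\tilde v_2$--$\overline v_1$ part vanishes by $z$-integration of the average-free $\tilde v_2$, and the $\tilde v_2$--$\tilde v_1$ part is estimated via Poincar\'e ($\|\tilde v_i\|\lesssim\|\partial_z v_i\|$) and Young's inequality to produce $c(\eta)\norm{\partial_z v}_{H^s}^2\norm{\overline v}_{H^s}^2$ plus small multiples of $\norm{\partial_y\partial_z v_1}_{H^s}^2$ and $\norm{\partial_y\overline v_1}_{H^s}^2$. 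The symmetric argument for $i=2$ yields the $\norm{\partial_x\overline v_2}_{H^s}$ term on the right-hand side.

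Finally I absorb the $7/4$- and $3/2$-powers of the dissipative norms that appear in a) and e) into the good terms on the left via Young's inequality: writing $\eta^2\norm{\partial_z v_1}_{H^s}^{5/4}\norm{\partial_y\partial_z v_1}_{H^s}^{7/4}\le \tfrac{1}{8\eta}\norm{\partial_y\partial_z v_1}_{H^s}^2 + c(\eta)\norm{\partial_z v}_{H^s}^{10}$ with conjugate exponents $(8/7,8)$, and $\eta^3\norm{\partial_z v}_{H^s}^{3/2}\norm{\partial_y\partial_z v}_{H^3}^{3/2}\le \tfrac{1}{8\eta}\norm{\partial_y\partial_z v}_{H^s}^2+c(\eta)\norm{\partial_z v}_{H^s}^{6}$ with $(4,4/3)$, using $H^3\hookrightarrow H^s$ (for $s\ge 3$). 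The $\varepsilon$-weighted analogues are treated identically and contribute the $\varepsilon$-factors on the right. Summing over $\alpha$ with $|\alpha|=s,\alpha_3=0$ and $i=1,2$, combining with the easy contributions from Lemma \ref{lemma:prepestimatebarotropic1}, and keeping $\eta$ in front of the weighted $H^s_{2\eta}$ norm to accommodate the factor $1/\partial_{zz}v_i$ bound, yields the stated inequality. The main obstacle is the bookkeeping in step three: identifying the cancellations between e) and b)--c) and correctly splitting $\langle\nablatoalpha\partial_y v_2,\nablatoalpha v_1\rangle$ into vanishing and absorbable pieces, since a naive estimate would either produce $\norm{\partial_y\partial_z v_2}_{H^s}$ (not on the LHS) or fail to close.
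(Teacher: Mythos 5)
Your proposal is correct and follows essentially the same route as the paper: the weighted multiplier $\nablatoalpha\partial_z v_i/|\partial_{zz}v_i|$ for the purely horizontal top-order derivatives combined with Lemma \ref{lemma:prepestimatebarotropic2}, the standard multiplier with Lemma \ref{lemma:prepestimatebarotropic1} for the rest, the Poincar\'e splitting $v=\overline v+\tilde v$ to convert $\partial_y v_1$, $\partial_x v_2$ into the dissipative quantities appearing on the left, and Young's inequality with exponents $(8/7,8)$ and $(4/3,4)$ producing the powers $10$ and $6$. The only cosmetic difference is your four-way $\overline v$/$\tilde v$ decomposition of $\langle\nablatoalpha\partial_y v_2,\nablatoalpha v_1\rangle$, where the paper instead integrates by parts once and bounds $\norm{\nablatoalpha v_2}_{L^2}\norm{\nablatoalpha\partial_y v_1}_{L^2}$ directly before splitting; both land in the same place.
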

\begin{proof}
For $\nablatoalpha=(\partial_x,\partial_y)^{\alpha}$ 
we multiply (\ref{eq:primequvz}) by 
$\frac{\nablatoalpha\partial_z v_i}{|\partial_{zz} v_{i}|}$,
\begin{multline*}
\langle \partial_t \nablatoalpha \partial_z v_{i},\frac{\nablatoalpha\partial_z v_i}{|\partial_{zz} v_{i}|} \rangle_{\Omega}
- \langle A_{\varepsilon,i}\nablatoalpha\partial_z v_{i},\frac{\nablatoalpha\partial_z v_i}{|\partial_{zz} v_{i}|} \rangle_{\Omega}\\
=-\langle \nablatoalpha(\partial_z v \cdot \nablah  v_{i} + \partial_z w \partial_z v_{i}+v \cdot \nablah  \partial_z v_{i}+ w \partial_{zz} v_{i}),\frac{\nablatoalpha\partial_z v_i}{|\partial_{zz} v_{i}|} \rangle_{\Omega}.
\end{multline*}
With
\begin{align*}
|\langle \nablatoalpha (\partial_x v_{1}+\partial_y v_{2}),\nablatoalpha v_{1}\rangle_{\Omega}|
& \leq \norm{\nablatoalpha v_{2}}_{L^2} \norm{\nablatoalpha \partial_y v_{1}}_{L^2}
\end{align*}
follows from Lemma \ref{lemma:prepestimatebarotropic2} for $i=1$
\begin{align*}
&\sum_{|\alpha|=s}\frac{1}{2} \ddt \norm{\frac{\nablatoalpha \partial_z v_{1}}{\sqrt{|\partial_{zz} v_{1}|}}}_{L^2}^2
+\varepsilon\frac{1}{2\eta}\norm{\nablatoalpha\partial_{x} \partial_z v_{1}}^2_{L^2}+\frac{1}{2\eta}\norm{\nablatoalpha\partial_{y} \partial_z v_{1}}^2_{L^2} \\
& \quad\leq \norm{v_{2}}_{H^s} \norm{\partial_y v_{1}}_{H^s}+ c\eta \norm{v}_{H^s}\norm{\partial_z v}_{H^s}^2+c\eta^2 \norm{\partial_{z} v}_{H^s}^3(1+\norm{v}_{H^s})\\
& \qquad +c\eta\norm{\partial_z v}_{H^s}(\norm{\partial_z v}_{H^s}+\norm{v}_{H^s})\norm{\partial_y v_{1}}_{H^s}+c\eta^2 \norm{v}_{H^s}\norm{\partial_z v}_{H^s}^2 \norm{\partial_{z}\partial_y v_{1}}_{H^s}\\
& \qquad +c \eta^2\norm{\partial_{z} v}_{H^{s}}^{5/4}(\varepsilon\norm{\partial_{x}\partial_{z} v_{1}}_{H^{s}}^{7/4}+ \norm{\partial_{y}\partial_{z} v_{1}}_{H^{s}}^{7/4})\\
& \qquad +c\eta^3 \norm{\partial_{z} v}^{3/2}_{H^s}(\varepsilon\norm{\partial_{x}\partial_{z} v_1}^{3/2}_{H^s}+\norm{\partial_{y}\partial_{z} v_1}^{3/2}_{H^s}).
\end{align*}
We use $v=\tilde v+\overline v$, where 
$\tilde v$ is average free in the vertical 
direction, and Poincar\'e's inequality to get
\begin{align*}
\norm{\partial_y v}_{H^s(\Omega)}	&\leq \norm{\partial_y \tilde v}_{H^s(\Omega)}+\norm{\partial_y \overline v}_{H^s(\Omega)}
			  \leq c\norm{\partial_y \partial_z \tilde v}_{H^s(\Omega)}+\sqrt{2h}\norm{\partial_y \overline v}_{H^s(G)}.
\end{align*}
By this and by applying Young's inequality a couple of times follows
\begin{align*}
&\sum_{|\alpha|=s} \eta \ddt \norm{\frac{\nablatoalpha \partial_z v_{1}}{\sqrt{|\partial_{zz} v_{1}|}}}_{L^2}^2
+\varepsilon\norm{\nablatoalpha\partial_{x} \partial_z v_{1}}^2_{L^2}+\norm{\nablatoalpha\partial_{y} \partial_z v_{1}}^2_{L^2}\\
&\quad \leq c(\eta) (\norm{\overline v}_{H^s}^2+\norm{\partial_z v}_{H^s}^2+\norm{\overline v}_{H^s}^2\norm{\partial_z v}_{H^2}^4)+ c(\eta)(1+\varepsilon)(\norm{\partial_{z}v}_{H^s}^{10}+\norm{\partial_{z}v}_{H^s}^{6})\\
&\qquad +\frac{1}{2}\norm{\partial_{y} \overline v_{1}}^2_{H^s}+\frac{1}{2}\norm{\partial_{y}\partial_z v_{1}}^2_{H^s}+\frac{\varepsilon}{2}\norm{\partial_{x}\partial_{z} v_1}^{2}_{H^s}.
\end{align*}
For $i=2$ we can proceed in the same way. 
Together with Lemma \ref{lemma:prepestimatebarotropic1} 
and after applying Young's inequality some more 
times we obtain the stated estimate. 
\end{proof}

\subsubsection*{Local existence}
We finally have everything in place to 
give the proof of Theorem \ref{th:introd2}, i.e. to 
show the local in time well-posedness of 
(\ref{eq:primequhalfhorvisc}). Let us 
restate the Theorem using the $H_{\per,\eta}^s$-spaces.
\begin{theorem}
Let $s\geq3$ and $\eta>0$. Then for any 
$v_0\in H_{\per}^s(\Omega)$ with $\divh \overline v_0=0$ 
and $\partial_z v_0 \in H_{\per,\eta}^s(\Omega)$ 
there exists a $T>0$ and a unique strong solution 
$v$ to (\ref{eq:primequhalfhorvisc}) with
\begin{align*}
 v \in L^{\infty}((0,T), H_{\per}^{s}(\Omega)) \cap C^0([0,T], H_{\per}^{s-\kappa}(\Omega)),\; \partial_x v_2,\partial_y v_1 \in L^2((0,T),H_{\per}^{s}(\Omega))
\end{align*}
for all $\kappa\in(0,1)$ and $\partial_z v$ 
has the same regularity as $v$ with 
$\partial_z v(t)\in H_{\per,2\eta}^s(\Omega)$.
%\begin{align*}
% \partial_z v & \in L^{\infty}((0,T), H_{\per,\eta}^{s}(\Omega)) \cap C^0([0,T], H_{\per,\eta}^{s-\kappa}(\Omega)),\\
% \partial_x\partial_z v_2,\partial_y\partial_z v_1 & \in L^2((0,T),H_{\per}^{s}(\Omega))
%\end{align*}
%for all $\kappa\in(0,1)$.
\end{theorem}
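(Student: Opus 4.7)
The strategy, as outlined above, is to regularize: for each $\varepsilon>0$ the operator $A_\varepsilon$ provides full horizontal viscosity, so Corollary \ref{cor:horvisclocz} yields a smooth solution $v^\varepsilon$ for which $\partial_z v^\varepsilon$ has the same regularity as $v^\varepsilon$; in particular $\partial_{zz} v^\varepsilon$ is continuous in all variables. Since $\partial_z v_0 \in H^s_{\per,\eta}(\Omega)$, continuity preserves the relaxed Rayleigh bound $\partial_z v^\varepsilon(t)\in H^s_{\per,2\eta}(\Omega)$ on some initial interval $[0,T_\varepsilon]$. The crucial point is to establish a lower bound $T_0>0$ for $T_\varepsilon$ that is uniform in $\varepsilon$, together with an $\varepsilon$-uniform a priori bound on the $H^s$-norm of $v^\varepsilon$ and $\partial_z v^\varepsilon$ on $[0,T_0]$.

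The plan for this is to combine Lemma \ref{lemma:baroclinic} and Lemma \ref{lemma:estimatebarotropic} into a single differential inequality for
\begin{align*}
Y_\varepsilon(t) := \tfrac12\|\overline v^\varepsilon(t)\|_{H^s(G)}^2 + \eta\,\|\partial_z v^\varepsilon(t)\|_{H^s_{2\eta}}^2.
\end{align*}
Choosing $\delta$ small in Lemma \ref{lemma:baroclinic}, the terms $\delta\|\partial_y\partial_z\tilde v_1\|_{H^s}^2 + \delta\|\partial_x\partial_z\tilde v_2\|_{H^s}^2$ on its right-hand side get absorbed into the dissipation $\tfrac12\|\partial_y\partial_z v_1\|_{H^s}^2 + \tfrac12\|\partial_x\partial_z v_2\|_{H^s}^2$ produced by Lemma \ref{lemma:estimatebarotropic}, while $\delta\|\partial_y\overline v_1\|_{H^s}^2 + \delta\|\partial_x\overline v_2\|_{H^s}^2$ is absorbed into the baroclinic dissipation itself; conversely, the terms $\tfrac12\|\partial_y\overline v_i\|_{H^s}$ appearing on the right of the barotropic estimate are controlled by the baroclinic dissipation. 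Using $v^\varepsilon = \overline v^\varepsilon + \tilde v^\varepsilon$ with $\tilde v^\varepsilon$ of zero vertical mean, Poincar\'e's inequality gives $\|\tilde v^\varepsilon\|_{H^s(\Omega)} \le c\,\|\partial_z v^\varepsilon\|_{H^s(\Omega)}$, so $\|v^\varepsilon\|_{H^s(\Omega)}$ is controlled by $Y_\varepsilon$. The outcome is an inequality of the form $Y_\varepsilon'(t) \le C(\eta)\bigl(1 + Y_\varepsilon(t)^N\bigr)$ with constants independent of $\varepsilon$, whence a Gronwall-type argument yields a time $T_0 > 0$ and a bound $M$ with $Y_\varepsilon(t) \le M$ on $[0,T_0]$ uniformly in $\varepsilon$.

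The main obstacle I expect is propagating the pointwise Rayleigh condition uniformly in $\varepsilon$. For this I would differentiate (\ref{eq:primequvz}) once more in $z$ to control $\|\partial_{zz} v^\varepsilon(t)\|_{L^\infty(\Omega)}$ via $\|\partial_z v^\varepsilon(t)\|_{H^3(\Omega)}$ by Agmon's inequality (Lemma \ref{lemma:lowreg}\,b), then use a Gronwall argument with the uniform $H^s$ bound to show that $|\partial_{zz}v^\varepsilon_i(t,x,y,z)|$ remains in $[1/(2\eta),2\eta]$ on some interval $[0,T_0']$ independent of $\varepsilon$. After replacing $T_0$ by $\min(T_0,T_0')$, the multipliers $\nabla^\alpha\partial_z v_i / \partial_{zz} v_i$ used in Lemma \ref{lemma:prepestimatebarotropic2} stay bounded with constants independent of $\varepsilon$, which is precisely what is needed for the closed estimate on $[0,T_0]$ to survive the limit $\varepsilon\to 0$.

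With uniform bounds in hand, passage to the limit proceeds along the lines of the proof of Theorem \ref{th:horviscloc}: the equation yields a bound on $\partial_t v^\varepsilon$ in a lower-regularity space, Lemma \ref{le:aubinlions} extracts a subsequence converging to some $v$ in $C^0([0,T_0], H^{s-\kappa}_{\per}(\Omega))$ and weakly-$*$ in $L^\infty((0,T_0),H^s_{\per}(\Omega))$, while $\partial_x v_2^\varepsilon$ and $\partial_y v_1^\varepsilon$ converge weakly in $L^2((0,T_0),H^s_{\per}(\Omega))$; the terms $\varepsilon\partial_{xx}v_1^\varepsilon$ and $\varepsilon\partial_{yy}v_2^\varepsilon$ vanish in the limit. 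One checks that $v$ satisfies (\ref{eq:primequhalfhorvisc}) with $A=A_\perp$, that $\partial_z v$ inherits the same regularity, and that $\partial_z v(t) \in H^s_{\per,2\eta}(\Omega)$ on $[0,T_0]$. Uniqueness follows by applying the same combined energy argument to the difference of two strong solutions: the Rayleigh lower bound makes the $\partial_{zz} v_i$-weighted multiplier admissible, and Poincar\'e together with Gronwall's lemma forces the difference to vanish.
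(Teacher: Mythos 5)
Your overall architecture is the same as the paper's: regularize with $A_\varepsilon$, combine Lemma \ref{lemma:baroclinic} and Lemma \ref{lemma:estimatebarotropic} (absorbing the $\delta$-terms and the $\tfrac12\norm{\partial_y\overline v_i}$-terms into the respective dissipations, using Poincar\'e for $\tilde v$), close a Gronwall inequality uniformly in $\varepsilon$, and pass to the limit via Lemma \ref{le:aubinlions}. The one place where your plan has a genuine gap is exactly the step you flag as the main obstacle: propagating the pointwise Rayleigh condition on a time interval independent of $\varepsilon$. An Agmon bound $\norm{\partial_{zz}v^\varepsilon(t)}_{L^\infty}\leq c\norm{\partial_z v^\varepsilon(t)}_{H^3}$ only gives an upper bound of size $cM$, which need not be $\leq 2\eta$, and it gives nothing at all for the lower bound $|\partial_{zz}v_i^\varepsilon(t)|\geq \tfrac{1}{2\eta}$; for both you must control $\norm{\partial_{zz}v^\varepsilon(t)-\partial_{zz}v^\varepsilon(0)}_{L^\infty(\Omega)}$. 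A Gronwall argument on the equation obtained by differentiating (\ref{eq:primequvz}) once more in $z$ would require $\partial_t\partial_{zz}v^\varepsilon\in L^\infty(\Omega)$ with an $\varepsilon$-uniform bound; but the right-hand side contains $w\,\partial_{zzzz}v$ and $A_\varepsilon\partial_{zzz}v$, whose $L^\infty$-control demands at least $\partial_z v_0\in H^4(\Omega)$. This is precisely the Masmoudi--Wong route that the paper's Remark after the theorem identifies as incompatible with the hypothesis $s\geq 3$, so as written your step fails at the assumed regularity.

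The paper closes this gap with a space-time interpolation argument instead: from the uniform bounds one has $\partial_{zz}v^{\varepsilon}\in L^{\infty}((0,T_\varepsilon),H^2(\Omega))\cap W^{1,\infty}((0,T_\varepsilon),L^2(\Omega))$ (the time derivative in $L^2$ only needs $\partial_z v^\varepsilon\in L^\infty(H^3)$), hence $\partial_{zz}v^{\varepsilon}\in H^{\kappa,q}((0,T_\varepsilon),H^{2(1-\kappa)}(\Omega))$ for all $\kappa\in[0,1]$, and choosing $\kappa<\tfrac14$ and $q$ large this embeds into $C^{0,\alpha}([0,T_\varepsilon],C^0(\overline\Omega))$ with a norm controlled by the data, uniformly in $\varepsilon$. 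This yields $\norm{\partial_{zz}v^\varepsilon(t)-\partial_{zz}v^\varepsilon(0)}_{L^\infty}\leq C t^{\alpha}$ with $C$ independent of $\varepsilon$, and hence a uniform time $T_b$ on which $\partial_z v^\varepsilon(t)\in H^s_{\per,2\eta}(\Omega)$. You would need to replace your Gronwall step by an argument of this type (or strengthen the hypotheses to $H^4$ data) for the proof to go through.
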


\begin{proof}
Let $n\in\N$ and $v^{(n)}=\overline v^{(n)}+\tilde v^{(n)}$ 
be the solution to 
(\ref{eq:primequhorviscbar})-(\ref{eq:primequcoupling}) 
for $A=A_{\varepsilon}$ with $\varepsilon=\frac1n$ 
according to Corollary \ref{cor:horvisclocz}.

Following Lemma \ref{lemma:estimatebarotropic} and 
\ref{lemma:baroclinic} there exists some $T_n>0$ such that %the the inequalites (\ref{eq:estimatebaroclinic}) for $\overline v$ and (\ref{eq:estimatebarotropic}) for $\tilde v$ hold. Choosing $\delta=\frac{1}{8}$ and adding them up  gives
\begin{align*}
&\ddt \left(\eta\norm{\partial_z v^{(n)}}^2_{H^s_{2\eta}}+\norm{\overline{v}^{(n)}}^2_{H^s} \right)\\
&\qquad\qquad +\frac{1}{4}\left(\norm{\partial_{y}\partial_z v^{(n)}_{1}}^2_{H^s}
+\norm{\partial_{x}\partial_z v^{(n)}_{2}}^2_{H^s}
+\norm{\partial_{y}\overline v^{(n)}_{1}}^2_{H^s}
+\norm{\partial_{x}\overline v^{(n)}_{2}}^2_{H^s}\right)\\
&\qquad\qquad +\frac1n\left(\frac{3}{2}\norm{\partial_{x}\partial_z v^{(n)}_{1}}^2_{H^s}
		+\frac{3}{2}\norm{\partial_{y}\partial_z v^{(n)}_{2}}^2_{H^s}
		+ \norm{\partial_x \overline{v}^{(n)}_{1}}_{H^s}^2
		+ \norm{\partial_y \overline{v}^{(n)}_{2}}_{H^s}^2\right)\\
& \leq c(\eta) \left[\norm{\overline{v}^{(n)}}_{H^{s}}^2+\norm{\partial_{z}v^{(n)}}_{H^s}^{2}+\left(\norm{\overline{v}^{(n)}}^{2}_{H^{s}}+\norm{\partial_{z}v^{(n)}}_{H^s}^{2}\right)^5\right]
\end{align*}
holds for $t\leq T_n$. Integration in time 
and Gronwall's inequality yield now
\begin{align*}
& \norm{\overline v^{(n)}(t)}^2_{H^s}+\eta \norm{\partial_z v^{(n)}(t)}^2_{H^s_{2\eta}}\\
& \quad + \frac14 \int_0^t \norm{\partial_y \overline{v}^{(n)}_{1}(r)}_{H^s}^2\!+\norm{\partial_x \overline{v}^{(n)}_{2}(r)}_{H^s}^2 
 \!+\norm{\partial_y \partial_z \tilde{v}^{(n)}_{1}(r)}_{H^s}^2\!+\norm{\partial_x \partial_z \tilde{v}^{(n)}_{2}(r)}_{H^s}^2\dr\\
& \leq \left[\left(1+\norm{\overline v_0}^2_{H^s}+\eta \norm{\partial_z v_0}^2_{H^s_{\eta}}\right)e^{-c(\eta)t}-1\right]^{-1/4}
\end{align*}
and so there exists some $T_a$ independent of $n$ such that 
\begin{align}\label{eq:estparhorvis}
\begin{split}
\norm{\overline v^{(n)}(t)}^2_{H^s}\!+ \eta\norm{\partial_z v^{(n)}(t)}^2_{H^s_{2\eta}}\! &+\frac14  \int_0^t  \norm{\partial_y \overline{v}^{(n)}_{1}(r)}_{H^s}^2\! +\norm{\partial_y \partial_z \tilde{v}^{(n)}_{1}(r)}_{H^s}^2 \\
& \qquad\quad  +\norm{\partial_x \overline{v}^{(n)}_{2}(r)}_{H^s}^2+\!\norm{\partial_x \partial_z \tilde{v}^{(n)}_{2}(r)}_{H^s}^2\dr\\
& \leq 2 \left(\norm{\overline v_0}^2_{H^s}+ \eta\norm{\partial_z v_0}^2_{H^s_{\eta}}\right)
\end{split}
\end{align}
for $t\leq \min\{T_a,T_n\}$. Additionally, we have to 
show that the existence time $T_n$ does not tend to $0$. 
The uniform bound for the norm is not sufficient, 
because of the dependence of $T_n$ on the condition 
$\frac{1}{2\eta}\leq \frac{1}{|v^{(n)}_{zz}(t)|}\leq 2\eta$, 
we need to control 
\begin{align*}
 \norm{\partial_{zz}v^{(n)}(t)-\partial_{zz}v^{(n)}(0)}_{L^{\infty}(\Omega)}.
\end{align*}
Using $v_{zz}^{(n)}\in L^{\infty}((0,T_n),H^2(\Omega))\cap W^{1,\infty}((0,T_n),L^2(\Omega))$ we obtain first that
\begin{align*}
 \partial_{zz}v^{(n)} \in L^{q}((0,T_n),H^2(\Omega))\cap H^{1,q}((0,T_n),L^2(\Omega))
\end{align*}
for all $1\leq q \leq\infty$, 
where $H^{1,q}$ denotes the Bessel potential space. This implies 
(see \cite[Theorem 2.4.1]{Tolksdorf2017} and \cite[Lemma 2.61]{DenkKaip2013})
\begin{align*}
 \partial_{zz}v^{(n)} \in H^{\kappa,q}((0,T_n),H^{2(1-\kappa)}(\Omega))
\end{align*}
for $\kappa\in[0,1]$ and both embeddings are continuous, thus
\begin{multline*}
\norm{\partial_{zz}v^{(n)}}_{H^{\kappa,q}((0,T_n),H^{2(1-\kappa)})}\\
	   \leq cT_n^{1/q}\left(\norm{\partial_{z}v^{(n)}}_{L^{\infty}((0,T),H^3)}+\norm{\partial_{z}v^{(n)}}_{L^{\infty}((0,T),H^3)}\norm{v^{(n)}}_{L^{\infty}((0,T),H^3)}\right).
\end{multline*}
Choosing now $\kappa=\frac18$ (any $\kappa <\frac14$ is possible) 
and $q=\frac{2}{\kappa}=16$ we get 
\begin{align*}
 H^{\kappa,q}((0,T),H^{2(1-\kappa)}(\Omega))\hookrightarrow C^{0,\alpha}([0,T],C^0(\Omega))
\end{align*}
by Sobolev embedding with $\alpha=\frac{1}{8}$ and
\begin{align*}
 \norm{\partial_{zz}v^{(n)}}_{C^{0,\alpha}([0,T],L^{\infty}(\Omega))} &\leq cT_n^{1/q}\left(1+\norm{v^{(n)}}_{L^{\infty}((0,T),H^3)}\right)\norm{\partial_{z}v^{(n)}}_{L^{\infty}((0,T),H^3)}.
\end{align*}
Therefore,
\begin{multline*}
 \norm{\partial_{zz}v^{(n)}(t)-\partial_{zz}v^{(n)}(0)}_{L^{\infty}(\Omega)}\\
  \leq c t^{\alpha}T_n^{1/q} \left(1+\norm{\overline v_0}^2_{H^s}+\eta  \norm{\partial_z v_0}^2_{H^s_{\eta}}\right)\left(\norm{\overline v_0}^2_{H^s}+ \norm{\partial_z v_0}^2_{H^s_{\eta}}\right)
\end{multline*}
and so there exists a $T_{b}$ independent of $n$ 
such that $\partial_{z}v^{(n)}(t)\in H^s_{2\eta}(\Omega)$ 
for $t\leq T_{b}$. Especially, (\ref{eq:estparhorvis}) 
holds for $t\leq\min\{T_{b},T_a\}$ and from the equations 
for $\overline v^{(n)}$, $\partial_z v^{(n)}$ we get uniform
bounds for $\partial_t\overline v^{(n)},\partial_t\partial_z v^{(n)}$
in $L^{\infty}((0,T),H^{s-2})$.
The convergence 
of a subsequence to a solution follows now by 
using Lemma \ref{le:aubinlions}.
\end{proof}

\begin{remark}
Masmoudi and Wong handled 
$\norm{\partial_{zz}v(t)-\partial_{zz}v(0)}_{L^{\infty}}$ 
for the primitive Euler equation in $2D$ by assuming 
enough regularity to have $\ddt \partial_{zz}v(t)\in L^{\infty}$ 
with a uniform estimate against the initial data, which gives
\begin{align*}
 \norm{\partial_{zz}v(t)-\partial_{zz}v(0)}_{L^{\infty}}\leq ct \norm{\partial_z v_0}_{H^s}.
\end{align*}
This approach would have forced us to take initial 
values $v_0,\partial_z v_0$ at least in $H^4(\Omega)$. 
Applying our method in their proof allows to lower 
their regularity assumptions on the data to 
$\partial_z v_0\in H^3((-1,1)\times(-h,h))$ instead 
of $\partial_z v_0\in H^4((-1,1)\times(-h,h))$.
\end{remark}

\subsection{Proof of Theorem \ref{th:introd3}}
\begin{proof}
Here we also consider for $n\in\N$ an equation 
with full, but anisotropic horizontal viscosity. 
However, deducing the energy estimate is much 
simpler than in the proof of Theorem \ref{th:introd2}. 
Let $v_n$ be the solution to
\begin{align*}
\begin{split}
\partial_t v_n + v_n \cdot \nablah  v_n + w_n \partial_z v_n - A'_{n} v_n + \nablah  p_n  & = 0 \qquad\mbox{in }(0,T)\times\Omega,  \\
\partial_z p_n  & = 0 \qquad\mbox{in }(0,T)\times\Omega,  \\
\divh v_n + \partial_z w_n & = 0 \qquad\mbox{in }(0,T)\times \Omega,\\
v(t=0)& = v_0  \!\!\qquad\mbox{in }\Omega
\end{split}
\end{align*}
with periodic boundary conditions 
imposed on $v_n$ and $p_n$ in the 
horizontal directions, $w_n(z=\pm h)=0$ and 
\begin{align*}
 A'_{n} =\left(\begin{matrix} \partial_{xx}+\frac1n \partial_{yy} & 0 \\ 0 & \partial_{yy}+\frac1n \partial_{xx} \end{matrix}\right).
\end{align*}
Theorem \ref{th:horviscloc} yields the 
existence of such a $v_n$ and multiplication 
of the equation in $H^s(\Omega)$ with $v_n$ 
gives because of $s\geq 3$
\begin{align*}
\begin{split}
\frac 12 \ddt \norm{v_n}^2_{H^s}+\norm{\partial_x v_{1n}}^2_{H^s} & + \frac1n \norm{\partial_y v_{1n}}^2_{H^s}+\frac1n \norm{\partial_x v_{2n}}^2_{H^s}+\norm{\partial_x v_{2n}}^2_{H^s}\\
		& \leq c \norm{v_n}^3_{H^s}+ c \norm{w_n}_{H^s} \norm{v_n}^2_{H^s}\\
		& \leq c \norm{v_n}^3_{H^s}+ c \norm{\divh v_n}_{H^s} \norm{v_n}^2_{H^s}\\
		& \leq c \norm{v_n}^3_{H^s}+ c \norm{v_n}^4_{H^s}+\frac12 \norm{\partial_x v_{1n}}^2_{H^s}+\frac12 \norm{\partial_y v_{2n}}^2_{H^s}.
\end{split}
\end{align*}
This already implies for $T$ sufficiently 
small that we have a uniform bound for 
$\norm{v_{n}}_{L^{\infty}((0,T),H^s)}$, $\norm{\partial_x v_{1n}}_{L^2((0,T),H^s)}$ 
and $\norm{\partial_y v_{2n}}_{L^2((0,T),H^s)}$, 
which yields again the convergence to a solution 
by using Lemma \ref{le:aubinlions}. 
\end{proof}

\end{document}